\newtheorem{theorem}{Theorem}[section]
\newtheorem{lemma}[theorem]{Lemma}
\theoremstyle{definition}
\newtheorem{definition}[theorem]{Definition}
\newtheorem{remark}{Remark}
\newcommand{\esssup}{\mathop{\mathrm{esssup}}}%%%%%%%%%%%%%%%%%%%%%%%%%%%%%%%%%%% Yu Zhiyong Adds this line %%%%%%%%%%%%%%%%%%%%%%%%%%%%%%%%%%%
\newcommand{\essinf}{\mathop{\mathrm{essinf}}}%%%%%%%%%%%%%%%%%%%%%%%%%%%%%%%%%%% Yu Zhiyong Adds this line %%%%%%%%%%%%%%%%%%%%%%%%%%%%%%%%%%%
\title[Stochastic Recursive Optimal Control with Time Delay] %Use the shortened version of the full title
      {Stochastic Recursive Optimal Control Problem with Time Delay and Applications}
\author[Jingtao Shi and Huanshui Zhang]{}
\subjclass{Primary: 93E20, 60H10; Secondary: 34K50, 91G80.}
 \keywords{Stochastic optimal control, backward stochastic differential equation, stochastic differential delayed equation, recursive utility, generalized HJB equation, maximum principle.}
 \email{shijingtao@sdu.edu.cn}
 \email{hszhang@sdu.edu.cn}
\thanks{J. Shi acknowledges the financial support from the National Natural Science Foundations of China (11301011, 11201264) and Shandong Province (ZR2011AQ012). H. Zhang acknowledges the financial support from the National Natural Science Foundation for Distinguished Young Scholars of China (60825304), the National Basic Research Development Program of China (973 Program, No. 2009CB320600), and the National Natural Science Foundation of China (61104050).}
\begin{document}
\maketitle

% Enter the first author's name and address:
\centerline{\scshape Jingtao Shi}
\medskip
{\footnotesize
% please put the address of the first author
 \centerline{School of Mathematics, Shandong University, Jinan 250100, China}
   \centerline{Qilu Securities Institute of Financial Studies, Shandong University, Jinan 250100, China}
} % Do not forget to end the {\footnotesize by the sign }

\medskip

\centerline{\scshape Huanshui Zhang}
\medskip
{\footnotesize
 % please put the address of the second  and third author
 \centerline{School of Control Science and Engineering, Shandong University, Jinan 250061, China}
}

\bigskip

% The name of the associate editor will be entered by an editorial staff
% "Communicated by the associate editor name" is not needed for special issue.
 \centerline{(Communicated by the associate editor name)}

%The abstract of your paper
\begin{abstract}
This paper is concerned with a stochastic recursive optimal control problem with time delay, where the controlled system is described by a stochastic differential delayed equation (SDDE) and the cost functional is formulated as the solution to a backward SDDE (BSDDE). When there are only the pointwise and distributed time delays in the state variable, a generalized Hamilton-Jacobi-Bellman (HJB) equation for the value function in finite dimensional space is obtained, applying dynamic programming principle. This generalized HJB equation admits a smooth solution when the coefficients satisfy a particular system of first order partial differential equations (PDEs). A sufficient maximum principle is derived, where the adjoint equation is a forward-backward SDDE (FBSDDE). Under some differentiability assumptions, the relationship between the value function, the adjoint processes and the generalized Hamiltonian function is obtained. A consumption and portfolio optimization problem with recursive utility in the financial market, is discussed to show the applications of our result. Explicit solutions in a finite dimensional space derived by the two different approaches, coincide.
\end{abstract}

%The title of your section 1
\section{Introduction}

The research of many natural and social phenomena shows that the future development of many processes
depends not only on their present state but also essentially on their previous history. Such processes can be described
by stochastic differential delayed equations (SDDEs). Many examples, such as population dynamics models in biology and memory or inertia representation models in finance, can be found in Kolmanovskii and Shaikhet \cite{KS96}, Mohammed \cite {Mo98}. Stochastic optimal control problems with time delay are those whose dynamics of states are described by SDDEs, and to find some optimal control to maximize/minimize the corresponding cost functionals. In general, stochastic optimal control problems with time delay are practically intractable, because of their infinite-dimensional nature.

However, in certain cases which are still very interesting for the applications, stochastic optimal control problems with time delay can be reduced to a finite-dimensional one and solved explicitly. To the best of our knowledge, the first example of such a solvable problem is a linear delayed system with a quadratic cost functional, given by Kolmanovskii and Maizenberg \cite{KM73}, where only the pointwise and distributed time delays are involved in the state variable (see (\ref{definition of delay terms}) in this section). Then Elsanosi et al. \cite{EOS00} consider optimal harvesting of systems described by SDDEs, where the value function of the problem depends on the initial path of the process in a simple way. Maximum principle approach was developed by \O ksendal and Sulem \cite{OS00} for optimal control of stochastic systems with delay, where the adjoint equations are described as three backward SDDEs (BSDDEs) and one of the adjoint processes need to be zero. Dynamic programming principle for optimal control problems of systems described by SDDEs was obtained by Larssen \cite{La02} when both the dynamics and the cost depends on the past in a general way. As applications, systems where the value function depends on the past only through some weighted average were studied. The finite dimensional Hamilton-Jacobi-Bellman (HJB) equation for the value function of such problems was derived by Larssen and Risebro \cite{LR03}, and the solvability of it was guaranteed by a particular system of first order partial differential equations (PDEs). Extensions for stochastic optimal control problems with time delay, to jump diffusions can be seen in \O ksendal et al. \cite{OSZ11} and to infinite horizon were researched by Agram et al. \cite{AHOP13} recently.

The nonlinear backward stochastic differential equation (BSDEs) was first introduced by Pardoux and Peng \cite{PP90}. Independently, Duffie and Epstein \cite{DE92} introduced the BSDE when they presented a stochastic differential formulation of recursive utility. Later, found by El Karoui et al. \cite{EPQ97}, the recursive utility process can be regarded as the solution to some special BSDE. The stochastic recursive optimal control problem is the one whose cost functional is described by the solution to a BSDE. In this setting, the controlled systems become forward-backward stochastic differential equations (FBSDEs). This kind of optimal control problem has important applications in mathematical economics and finance; see Schroder and Skiadas \cite{SS99}, El Karoui et al. \cite{EPQ01}, \O ksendal and Sulem \cite{OS09}, Wang and Wu \cite{WW09}, Shi and Wu \cite{ShiWu10}, Shi and Yu \cite{ShiYu13}.

It is natural to study stochastic recursive optimal control problems or forward-backward stochastic control systems with time delay, by involving time delays of the state and/or the control variables in the coefficients of the state dynamics and/or the cost functionals. In this case, the cost functional is described as the solution to some BSDDE which is a natural generalization of the classical BSDE to time delayed one. To our best knowledge, Fuhrman et al. \cite{FMT10} first considered one special case of forward-backward stochastic control system with time delay under infinite dimensional space framework, and the value function was proved to be a mild solution to the corresponding HJB equation and the existence of optimal controls in the weak sense was given. In Chen and Wu \cite{ChenWu12}, stochastic recursive optimal control problem with time delay in a general form was considered and the dynamic programming principle was presented. The value function was proved to be the viscosity solution to the corresponding infinite dimensional HJB equation. The optimal control problem of an infinite horizon system governed by a forward-backward SDDE (FBSDDE) was studied by Agram and \O ksendal \cite{AO14}. Sufficient and necessary maximum principles for optimal control under partial information were obtained. An optimal consumption problem with respect to recursive utility from a cash flow with delay was discussed. However, since in their paper the adjoint backward equation was described as an anticipated or time-advanced BSDE (ABSDE) of Peng and Yang \cite{PY09}'s type, no explicit solution was given (note that a solvable special case was given only when trivially there was no time delay in \cite{AO14}). We point out that the ABSDE was another important generalization of classical BSDE, which was very useful to represent the adjoint equation when dealing with the stochastic optimal control problem especially with time delay in the control variable; see Chen and Wu \cite{ChenWu10}, Yu \cite{Yu12}. However, it is in general very difficult to find explicit solutions to this kind of ABSDEs when dealing with real-world problems, though some solvable and numerical results have been published in very special cases.

In the present paper, different from all the above literatures, we study the following stochastic recursive optimal control problem with time delay. Let $\{W(t),t\geq0\}$ be a one-dimensional Brownian motion on some probability space $(\Omega,\mathcal{F},\mathbb{P})$. For $s\geq0$, we assume that the completed filtration $\mathcal{F}^s_t=\sigma\{W(\tau);s\leq\tau\leq t\}$ is augmented by all the $\mathbb{P}$-null sets in $\mathcal{F}$. Let $0<T<\infty$ be the fixed time duration and $0\leq\delta<\infty$ be the constant time delay. Denote $C([-\delta,0];\mathbb{R})$ the Banach space of continuous functions $\gamma:[-\delta,0]\rightarrow\mathbb{R}$ with norm $||\gamma||_C:=\sup\limits_{-\delta\leq t\leq0}|\gamma(t)|$.

For given initial time $s\in[0,T)$, we consider the following controlled SDDE
\begin{equation}\label{controlled SDDE}
\left\{
\begin{aligned}
dX^{s,\varphi;u}(t)&=b\big(t,X^{s,\varphi;u}(t),X_1^{s,\varphi;u}(t),X_2^{s,\varphi;u}(t),u(t)\big)dt\\
     &\quad+\sigma\big(t,X^{s,\varphi;u}(t),X_1^{s,\varphi;u}(t),X_2^{s,\varphi;u}(t),u(t)\big)dW(t),\quad t\in(s,T],\\
 X^{s,\varphi;u}(t)&=\varphi(t-s),\ t\in[s-\delta,s].
\end{aligned}
\right.
\end{equation}
Here continuous function $\varphi:[-\delta,0]\rightarrow\mathbb{R}$ is
the initial path of $X^{s,\varphi;u}(\cdot)$. Let
$\mathbb{U}\subset\mathbb{R}$ be a nonempty convex set. Control $u:\Omega\times[0,T]\rightarrow\mathbb{U}$ is an
$\mathcal{F}^s_t$-adapted process and
\begin{equation}\label{definition of delay terms}
\begin{aligned}
X_1^{s,\varphi;u}(t)=\int_{-\delta}^0e^{\lambda\tau}X^{s,\varphi;u}(t+\tau)d\tau,\quad
X_2^{s,\varphi;u}(t)=X^{s,\varphi;u}(t-\delta),
\end{aligned}
\end{equation}
represent given functionals of the path segment
$X^{s,\varphi;u}_t:=\big\{X^{s,\varphi;u}(t+\tau);\tau\in[-\delta,0]\big\}$ of $X^{s,\varphi;u}(\cdot)$.
$\lambda\in\mathbb{R}$ is the averaging parameter.
$b:[0,T]\times\mathbb{R}^3\times\mathbb{U}\rightarrow\mathbb{R}$ and $\sigma:[0,T]\times\mathbb{R}^3\times\mathbb{U}\rightarrow\mathbb{R}$
are given continuous functions.

Next, we introduce the following controlled BSDDE
\begin{equation}\label{controlled BSDDE}
\left\{
\begin{aligned}
-dY^{s,\varphi;u}(t)&=f\big(t,X^{s,\varphi;u}(t),X_1^{s,\varphi;u}(t),X_2^{s,\varphi;u}(t),Y^{s,\varphi;u}(t),Z^{s,\varphi;u}(t),u(t)\big)dt\\
                      &\quad-Z^{s,\varphi;u}(t)dW(t),\quad t\in[s,T],\\
  Y^{s,\varphi;u}(T)&=\phi(X^{s,\varphi;u}(T),X_1^{s,\varphi;u}(T)).
\end{aligned}
\right.
\end{equation}
Here $f:[0,T]\times\mathbb{R}^3\times\mathbb{R}^2\times\mathbb{U}\rightarrow\mathbb{R},\phi:\mathbb{R}^2\rightarrow\mathbb{R}$ are given functions.

For given $(s,\varphi)\in[0,T)\times C([-\delta,0];\mathbb{R})$ and control $u(\cdot)$, the recursive utility functional of our problem is defined as follows
\begin{equation}\label{recursive utility functional}
\begin{aligned}
J(s,\varphi;u(\cdot))=-Y^{s,\varphi;u}(t)\big|_{t=s}.
\end{aligned}
\end{equation}
We define $\mathcal{U}[s,T]$ as the set of admissible controls $u(\cdot)$ such that if
$u(\cdot)\in\mathcal{U}[s,T]$ then the SDDE (\ref{controlled SDDE}) and BSDDE (\ref{controlled BSDDE}) with (\ref{definition of delay terms}) admit
unique $\mathcal{F}^s_t$-adapted solutions $X^{s,\varphi;u}(\cdot)$ and $(Y^{s,\varphi;u}(\cdot),Z^{s,\varphi;u}(\cdot))$, respectively, for given initial data $(s,\varphi)\in[0,T)\times C([-\delta,0];\mathbb{R})$.

{\bf Problem (SROCPD).}\quad The {\it stochastic recursive optimal control problem with time delay} is to find an optimal control $u^*(\cdot)\in\mathcal{U}[s,T]$, such that
\begin{equation}\label{value function for recursive utility}
V(s,\varphi):=J(s,\varphi;u^*(\cdot))=\essinf\limits_{u(\cdot)\in\ \mathcal{U}[s,T]}J(s,\varphi;u(\cdot)),
\end{equation}
for all $(s,\varphi)\in[0,T)\times C([-\delta,0];\mathbb{R})$, with $V(T,\varphi)=-\phi(\varphi),\varphi\in C([-\delta,0];\mathbb{R})$. The corresponding solutions $(X^{s,\varphi;u^*}(\cdot),Y^{s,\varphi;u^*}(\cdot),Z^{s,\varphi;u^*}(\cdot))$ to (\ref{controlled SDDE}), (\ref{definition of delay terms}) and (\ref{controlled BSDDE}) are called the optimal states, $(X^{s,\varphi;u^*}(\cdot),u^*(\cdot))$ is called the optimal pair and $V(s,\varphi)$ is called the value function.

This problem can be reformulated as follows. The state processes triple $(X^{s,\varphi;u}(\cdot),\\Y^{s,\varphi;u}(\cdot),Z^{s,\varphi;u}(\cdot))$ satisfies the following controlled FBSDDE
\begin{equation}\label{controlled FBSDDE}
\left\{
\begin{aligned}
dX^{s,\varphi;u}(t)&=b\big(t,X^{s,\varphi;u}(t),X_1^{s,\varphi;u}(t),X_2^{s,\varphi;u}(t),u(t)\big)dt\\
     &\quad+\sigma\big(t,X^{s,\varphi;u}(t),X_1^{s,\varphi;u}(t),X_2^{s,\varphi;u}(t),u(t)\big)dW(t),\\
-dY^{s,\varphi;u}(t)&=f\big(t,X^{t,\varphi;u}(t),X_1^{t,\varphi;u}(t),X_2^{t,\varphi;u}(t),Y^{s,\varphi;u}(t),Z^{s,\varphi;u}(t),u(t)\big)dt\\
                      &\quad-Z^{t,\varphi;u}(t)dW(t),\quad t\in[s,T],\\
  X^{s,\varphi;u}(t)&=\varphi(t-s),\ t\in[s-\delta,s],\\
  Y^{s,\varphi;u}(T)&=\phi(X^{s,\varphi;u}(T),X_1^{s,\varphi;u}(T)),
\end{aligned}
\right.
\end{equation}
and the cost functional is given of the form
\begin{equation}\label{cost functional for FBSDDE}
\begin{aligned}
 &J(s,\varphi;u(\cdot))=-Y^{s,\varphi;u}(s)\\
=&-\mathbb{E}^{s,\varphi;u}\bigg[\int_s^Tf\big(t,X^{t,\varphi;u}(t),X_1^{t,\varphi;u}(t),X_2^{t,\varphi;u}(t),Y^{s,\varphi;u}(t),Z^{s,\varphi;u}(t),u(t)\big)dt\\
 &\qquad\qquad+\phi(X^{s,\varphi;u}(T),X_1^{s,\varphi;u}(T))\bigg].
\end{aligned}
\end{equation}
Here $\mathbb{E}^{s,\varphi;u}[\cdot]$ denotes expectation with respect to
the law of $X^{s,\varphi;u}(\cdot)$.

{\bf Problem (FBSOCPD).}\  The {\it forward-backward stochastic optimal control problem with time delay} is to find an optimal control $u^*(\cdot)\in\mathcal{U}[s,T]$, such that
\begin{equation}\label{value function for FBSOCPD}
V(s,\varphi):=J(s,\varphi;u^*(\cdot))=\essinf\limits_{u(\cdot)\in\ \mathcal{U}[s,T]}J(s,\varphi;u(\cdot)),
\end{equation}
for all $(s,\varphi)\in[0,T)\times C([-\delta,0];\mathbb{R})$, with $V(T,\varphi)=-\phi(\varphi),\varphi\in C([-\delta,0];\mathbb{R})$.

In general, the solution to {\bf Problem (SROCPD)} (or equivalently, {\bf Problem (FBSOCPD)}) will depend on the initial path $\varphi$, which is in an infinite dimensional space $C([-\delta,0];\mathbb{R})$. As mentioned before, we expect that in some special cases it can be reduced to a finite dimensional one. In such a context, the crucial point is to investigate when this finite dimensional reduction of the problem is possible and/or to find conditions ensuring that. Several papers have made pioneering effort on this topic for stochastic optimal control problems with time delay (not recursive); see \cite{KM73}, \cite{EOS00}, \cite{OS00},  \cite{LR03}. Motivated by this point and its applicable prospect, in this paper we seek the conditions to ensure that {\bf Problem (SROCPD)} can be reduced to a finite dimensional one. Specifically, we show that if the system (\ref{controlled FBSDDE}) is on the form
\begin{equation}\label{controlled FBSDDE-finite dimensional}
\left\{
\begin{aligned}
 dX^{s,\varphi;u}(t)&=\Big[b_1\big(t,X^{s,\varphi;u}(t),X_1^{s,\varphi;u}(t),u(t)\big)\\
                    &\quad\ +b_2\big(t,X^{s,\varphi;u}(t),X_1^{s,\varphi;u}(t),u(t)\big)X_2^{s,\varphi;u}(t)\Big]dt\\
                    &\quad\ +\sigma\big(t,X^{s,\varphi;u}(t),X_1^{s,\varphi;u}(t),u(t)\big)dW(t),\\
-dY^{s,\varphi;u}(t)&=\Big[f_1\big(t,X^{t,\varphi;u}(t),X_1^{t,\varphi;u}(t),Y^{s,\varphi;u}(t),Z^{s,\varphi;u}(t),u(t)\big)\\
                    &\quad\ +f_2\big(t,X^{t,\varphi;u}(t),X_1^{t,\varphi;u}(t),Y^{s,\varphi;u}(t),Z^{s,\varphi;u}(t),u(t)\big)X_2^{s,\varphi;u}(t)\Big]dt\\
                    &\quad-Z^{t,\varphi;u}(t)dW(t),\quad t\in[s,T],\\
  X^{s,\varphi;u}(t)&=\varphi(t-s),\quad t\in[s-\delta,s],\\
  Y^{s,\varphi;u}(T)&=\phi(X^{s,\varphi;u}(T),X_1^{s,\varphi;u}(T)),
\end{aligned}
\right.
\end{equation}
the problem can be reduced to a finite dimensional one and its solvability could be guaranteed, provided an auxiliary system of first order PDEs involving the coefficients $b_1,b_2,\sigma,f_1,f_2$ and $\phi$ admits a solution. Though the main result which we obtained in this paper is for the controlled system (\ref{controlled FBSDDE-finite dimensional}) which is less general than (\ref{controlled FBSDDE}), it never the less covers many interesting applications. This is the first main contribution of this paper, and we will make this point clear in Section 2.

The other main contribution in this paper is that we first study the relationship between Bellman's dynamic programming and Pontryagin's maximum principle approaches, for stochastic recursive optimal control problems with time delay. Such a topic is of great importance in delay-free stochastic control theory; see the systematic monograph by Yong and Zhou \cite{YZ99}. Since the relationship between these two approaches is the one between the derivatives of the value function and the adjoint processes along the optimal state, or actually the one between HJB equations and stochastic Hamiltonian systems, and more generally, the one between PDEs and SDEs. For recent development of the relationship between dynamic programming and maximum principle for stochastic optimal control problems (without delay but including jump diffusions, Markov switching, singular control, or FBSDE systems), refer to Framstad et al. \cite{FOS04}, Shi and Wu \cite{ShiWu11}, Donnelly \cite{Don11}, Zhang et al. \cite{ZES12}, Bahlali et al. \cite{BCM12}, Shi and Yu \cite{ShiYu13}, Chighoub and Mezerdi \cite{CM14}. Thereby, it is natural to ask the question: Are there any relations between these two extensively used and important approaches, for stochastic optimal control problems with time delay? The answer should be yes. However, to our best knowledge, results on this topic are quite lacking in the literature, except the one by the first author \cite{Shi11}. One main difficulty and obstacle is that the solution to a stochastic optimal control problem with time delay, or controlled system with SDDE, will be in an infinite dimensional space framework. Moreover, their solvability in the infinite dimensional spaces is complicated and consequently their real applications are largely limited. Due to the special dependence on the past trajectory via terms $X_1(t)$ and $X_2(t)$ in (\ref{definition of delay terms}), in this paper we first prove a sufficient maximum principle when the terminal condition $\phi$ has some linear form. Note that our result can not be covered by Theorem 3.1 of \cite{AO14}, since they use time advanced FBSDE (AFBSDE) to describe the adjoint processes while we use the FBSDDE. Then we find the connections between the derivatives of the value function and the adjoint processes along the optimal state, assuming that the value function depends on the initial path of the process in a simple way and is smooth enough. The main result is shown in Section 3.

Rich literatures can be found for the financial applications of stochastic optimal control problems with time delay. For example, refer to \cite{KS96}, \cite{Mo98} for population growth models in biology, to \cite{OS00}, \cite{ChenWu10}, \cite{OSZ11} for optimal consumption choice problems, to Gozzi and Marinelli \cite{GM06} for advertising models, to Federico \cite{Federico11} for pension fund models, to Pang et al. \cite{CPY11} for portfolio optimization models and to Arriojas et al. \cite{AHMP07}, Mao and Sabanis \cite{MS13} for option pricing models in financial market. However, because of the infinite dimensional framework in many cases, no explicit solution exists, and numerical solutions are very difficult to obtain. This is one motivation for us to study the controlled system with time delay in the forms of $X_1(t)$ and $X_2(t)$ as defined in (\ref{definition of delay terms}). In Section 4, inspired by the applicable examples in \cite{OS00} and particularly in \cite{CPY11}, a consumption and portfolio optimization problem with recursive utility in the financial market is discussed. Another main contribution in this paper is that we obtain the explicit solution in finite dimensional space for this problem. Via an investigation of the corresponding PDEs system to guarantee the corresponding generalized HJB equation is effective, a complete discussion is possible and the theoretical results obtained in the previous sections are justified.

The rest of this paper is organized as follows. In Section 2, under some suitable assumptions, we investigate that under what conditions on the coefficients, the generalized HJB equation obtained by \cite{ChenWu12} via dynamic programming can be reduced to a finite dimensional one. An the main result is a stochastic verification theorem. In Section 3, after deriving a sufficient maximum principle for the optimal control, we obtain the relationship the two approaches: dynamic programming principle and maximum principle. Under the assumption that the value function is smooth enough, the relations among its derivatives, the adjoint processes and the generalized Hamiltonian function are given. A consumption and portfolio optimization problem with recursive utility in the financial market is discussed in Section 4, to show the applications of our result. Explicit solutions in a finite dimensional space derived by the maximum principle and dynamic programming approaches, coincide. Finally, Section 5 gives some concluding remarks.

%The title of your section 2
\section{Preliminaries and the Generalized HJB Equation in Finite Dimension}

In this section, we focus on the dynamic programming approach for {\bf Problem (SROCPD)}. We first present a stochastic verification theorem, where the generalized HJB equation in Theorem 4.9 of \cite{ChenWu12} is reduced to a finite dimensional one, by assuming that the value function of our problem depends on the initial path of the state process in a simple way. Then we find condition on the coefficients $b_1,b_2,\sigma,f_1,f_2$ and $\phi$ to ensure the above reduction is effective and applicable, which is a system of first order PDEs. The results in this section can be regarded as the extension of those in \cite{LR03} to recursive utility case.

For any $s\in[0,T)$, the following notations are used in this paper.
\begin{equation*}
\begin{aligned}
&L^2(\Omega,\mathcal{F}_T^s;\mathbb{R}):=\Big\{\mathbb{R}\mbox{-valued }\mathcal{F}_T^s\mbox{-measurable random variables }\xi;
 \mbox{ } \mathbb{E}|\xi|^2<\infty\Big\},
\end{aligned}
\end{equation*}
\begin{equation*}
\begin{aligned}
&S^2_\mathcal{F}([s,T];\mathbb{R}):=\Big\{\mathbb{R}\mbox{-valued }\mathcal{F}_t^s\mbox{-adapted processes }\psi(t);
 \mbox{ }\mathbb{E}\Big[\sup\limits_{s\leq t\leq T}|\psi(t)|\Big]<\infty\Big\},
\end{aligned}
\end{equation*}
\begin{equation*}
\begin{aligned}
&L^2_\mathcal{F}([s,T];\mathbb{R}):=\Big\{\mathbb{R}\mbox{-valued }\mathcal{F}_t^s\mbox{-adapted processes }\psi(t);
 \mbox{ }\mathbb{E}\int_s^T|\psi(t)|^2dt<\infty\Big\},
\end{aligned}
\end{equation*}
\begin{equation*}
\begin{aligned}
&\mathbb{L}^2(\Omega;C([-\delta,0];\mathbb{R});\mathcal{F}_t^s):=\Big\{\psi:\Omega\rightarrow C([-\delta,0];\mathbb{R})\mbox{ is }
 \mathcal{F}_t^s\mbox{-measurable};\mbox{ the norm}\\
&\qquad\qquad||\varphi||^2_{\mathbb{L}^2(\Omega;C([-\delta,0];\mathbb{R}))}:=\mathbb{E}\Big[||\varphi(\omega)||^2_{C([-\delta,0];\mathbb{R})}\Big]<\infty\Big\}.
\end{aligned}
\end{equation*}

First we introduce the following assumptions.

\noindent(H1)\ The functions $b(t,x,x_1,x_2,u)$ and
$\sigma(t,x,x_1,x_2,u)$ are joint continuous and globally Lipschitz in $(x,x_1,x_2)$.

\noindent(H2)\ There exists a constant $C>0$ such that
\begin{equation*}
\begin{aligned}
&|b(t,x,x_1,x_2,u)|+|\sigma(t,x,x_1,x_2,u)|\leq C(1+|x|+|x_1|+|x_2|),
\end{aligned}
\end{equation*}
for all $t\in[0,T],x\in\mathbb{R},x_1\in\mathbb{R},x_2\in\mathbb{R}$ and $u\in\mathbb{U}$.

\noindent(H3)\ The initial path $\varphi$ belongs to the space
$\mathbb{L}^2(\Omega;C([-\delta,0];\mathbb{R});\mathcal{F}_s^s)$ of
$\mathcal{F}_s^s$-measurable elements in
$\mathbb{L}^2(\Omega;C([-\delta,0];\mathbb{R}))$, that is,
$\varphi:\Omega\rightarrow C([-\delta,0];\mathbb{R})$ is
$\mathcal{F}_s^s$-measurable and
\begin{equation*}
||\varphi||^2_{\mathbb{L}^2(\Omega;C([-\delta,0];\mathbb{R}))}:=\mathbb{E}\Big[||\varphi(\omega)||^2_{C([-\delta,0];\mathbb{R})}\Big]<\infty.
\end{equation*}

The following classical result can be seen in \cite{Mo98}.

\begin{lemma}
Let assumptions (H1)$\sim$(H3) hold, then for any
$(s,\varphi)\in[0,T)\times\mathbb{L}^2(\Omega;C([-\delta,0];\mathbb{R}))$ and
$u(\cdot)\in\mathcal{U}[s,T]$, the SDDE (\ref{controlled SDDE}) with (\ref{definition of delay terms})
admits a unique adapted strong solution $X^{s,\varphi;u}(\cdot)\in\mathbb{L}^2(\Omega;C([-\delta,0];\mathbb{R});\mathcal{F}_t^s)$.
\end{lemma}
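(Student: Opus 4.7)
The plan is to use the method of steps combined with the standard Picard iteration, since the pointwise delay $X_2^{s,\varphi;u}(t)=X^{s,\varphi;u}(t-\delta)$ and the distributed delay $X_1^{s,\varphi;u}(t)=\int_{-\delta}^0 e^{\lambda\tau}X^{s,\varphi;u}(t+\tau)d\tau$ reduce to known (past-dependent) data on each time interval of length $\delta$. Concretely, I would first set $t\in[s,s+\delta]$ (or $[s,T]$ if $T-s\leq \delta$); on this interval both delayed arguments are fully determined by the prescribed initial path $\varphi$, namely $X^{s,\varphi;u}(t-\delta)=\varphi(t-s-\delta)$ when $t-\delta<s$, and $X_1^{s,\varphi;u}(t)$ splits into a deterministic piece computed from $\varphi$ plus a forward integral $\int_{s-t}^0 e^{\lambda\tau}X^{s,\varphi;u}(t+\tau)d\tau$. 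Thus (\ref{controlled SDDE}) becomes a classical It\^o SDE with (random but $\mathcal{F}_s^s$-measurable) coefficients that inherit joint continuity, the global Lipschitz condition and the linear growth bound from (H1)--(H2).

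Next, I would carry out a Picard iteration on $[s,s+\delta]$ in the Banach space $S^2_\mathcal{F}([s,s+\delta];\mathbb{R})$. Defining $X^{(0)}(t):=\varphi(0)$ and
\begin{equation*}
X^{(n+1)}(t)=\varphi(0)+\int_s^t b\bigl(r,X^{(n)}(r),X_1^{(n)}(r),X_2^{(n)}(r),u(r)\bigr)dr+\int_s^t\sigma(\cdots)dW(r),
\end{equation*}
the key estimate is that the map $X\mapsto (X,X_1,X_2)$ is Lipschitz from $S^2_\mathcal{F}$ to itself: for $X_2$ this is immediate (the past is fixed by $\varphi$), and for $X_1$ one uses $|X_1(t)-\tilde X_1(t)|\le \bigl(\int_{-\delta}^0 e^{\lambda\tau}d\tau\bigr)\sup_{s\le r\le t}|X(r)-\tilde X(r)|$. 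Combining this with (H1) and applying the Burkholder--Davis--Gundy and Gronwall inequalities in the usual way yields a contraction (after possibly shrinking the interval or iterating the Gronwall argument), hence existence and uniqueness of a strong solution in $S^2_\mathcal{F}([s,s+\delta];\mathbb{R})$. The a priori $L^2$ bound follows from the linear growth (H2) together with (H3), via a Gronwall argument applied to $\mathbb{E}\sup_{s\le r\le t}|X(r)|^2$.

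Having solved on $[s,s+\delta]$, I would iterate the procedure on $[s+k\delta,s+(k+1)\delta]$, $k=1,2,\ldots$, using the previously constructed solution to define the delayed terms. Since $T-s$ is finite, only finitely many steps are needed, and concatenation yields a unique adapted solution $X^{s,\varphi;u}(\cdot)$ on $[s-\delta,T]$ whose restriction to $[s,T]$ lies in $S^2_\mathcal{F}$; together with the initial path in $\mathbb{L}^2(\Omega;C([-\delta,0];\mathbb{R});\mathcal{F}_s^s)$ this gives membership in $\mathbb{L}^2(\Omega;C([-\delta,0];\mathbb{R});\mathcal{F}_t^s)$ for every $t\in[s,T]$.

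The main obstacle I anticipate is purely bookkeeping: ensuring that the functional $\varphi\mapsto X_1$ is handled correctly on the first step, because it is a mixture of a deterministic (initial-path) tail and an unknown forward tail, and that the Lipschitz constant arising from $\int_{-\delta}^0 e^{\lambda\tau}d\tau$ is uniform in $t$. Once these functional-Lipschitz estimates are in place, the argument is entirely standard and reduces to the classical It\^o SDE theory as surveyed in Mohammed \cite{Mo98}, which is precisely why the lemma is stated without proof.
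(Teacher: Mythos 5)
The paper offers no proof of this lemma at all --- it is simply cited as a classical result from Mohammed \cite{Mo98} --- and your method-of-steps/Picard argument is exactly the standard proof that citation refers to: the maps $X\mapsto X_1$ and $X\mapsto X_2$ are Lipschitz from the path space with sup norm into $\mathbb{R}$, so successive approximations combined with the Burkholder--Davis--Gundy and Gronwall inequalities yield existence, uniqueness, adaptedness and the $L^2$ bound. The only imprecision is the phrase ``becomes a classical It\^o SDE'' on the first step: because of the forward tail of the distributed delay $X_1$ the equation remains genuinely path-dependent on $[s,s+\delta]$, but your subsequent functional-Lipschitz estimate is exactly what is needed to handle this, so the argument is sound.
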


We also need the following assumptions.

\noindent(H4)\quad $f(\cdot,x,x_1,x_2,y,z,u)$ is $\mathcal{F}_t^s$-measurable, for all $x,x_1,x_2\in\mathbb{R}$ and $u\in\mathbb{U}$.

\noindent(H5)\quad The functions $f(t,x,x_1,x_2,y,z,u)$ and
$\phi(x,x_1)$ are joint continuous and globally Lipschitz in $(x,x_1,x_2,y,z)$.

\noindent(H6)\quad There exists a constant $C>0$ such that
\begin{equation*}
\begin{aligned}
&|f(t,x,x_1,x_2,0,0,u)|+|\phi(x,x_1)|\leq C(1+|x|+|x_1|+|x_2|),
\end{aligned}
\end{equation*}
for all $t\in[0,T],x,x_1,x_2\in\mathbb{R}$ and $u\in\mathbb{U}$.

The following result can be obtained from the classical BSDE theory, by Lemma 2.1. See also \cite{ChenWu12} in detail.

\begin{lemma}
Let assumptions (H1)$\sim$(H6) hold, then for any
$(s,\varphi)\in[0,T)\times\mathbb{L}^2(\Omega;C([-\delta,0];\mathbb{R}))$ and
$u(\cdot)\in\mathcal{U}[s,T]$, the BSDDE (\ref{controlled BSDDE}) with (\ref{definition of delay terms})
admits a unique adapted solution $(Y^{s,\varphi;u}(\cdot),Z^{s,\varphi;u}(\cdot))\in S^2_\mathcal{F}([s,T];\mathbb{R})\times L^2_\mathcal{F}([s,T];\mathbb{R})$.
\end{lemma}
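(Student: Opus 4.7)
The plan is to reduce the problem to the classical Pardoux--Peng existence and uniqueness theorem for BSDEs, since the delay terms $X_1^{s,\varphi;u}(\cdot)$ and $X_2^{s,\varphi;u}(\cdot)$ appear only as inputs driving the generator and terminal value of (\ref{controlled BSDDE}); they are not applied to $Y$ or $Z$. In particular, once the forward trajectory is fixed by Lemma 2.1, the backward equation is a standard one-dimensional BSDE whose coefficients depend on the deterministic-in-$(y,z)$ data coming from $X^{s,\varphi;u}$.

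First I would invoke Lemma 2.1 to obtain the unique $\mathcal{F}_t^s$-adapted solution $X^{s,\varphi;u}(\cdot)\in\mathbb{L}^2(\Omega;C([-\delta,0];\mathbb{R});\mathcal{F}_t^s)$, yielding in particular $\mathbb{E}[\sup_{s-\delta\le t\le T}|X^{s,\varphi;u}(t)|^2]<\infty$. From the definitions in (\ref{definition of delay terms}), Jensen/Cauchy--Schwarz immediately gives
\begin{equation*}
|X_1^{s,\varphi;u}(t)|^2 \le \delta\int_{-\delta}^{0} e^{2\lambda\tau}|X^{s,\varphi;u}(t+\tau)|^2\,d\tau,\qquad |X_2^{s,\varphi;u}(t)|^2=|X^{s,\varphi;u}(t-\delta)|^2,
\end{equation*}
so that both $X_1^{s,\varphi;u}(\cdot)$ and $X_2^{s,\varphi;u}(\cdot)$ are $\mathcal{F}_t^s$-adapted with finite second moment uniformly on $[s,T]$; in particular they belong to $L^2_\mathcal{F}([s,T];\mathbb{R})$.

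Next, define the effective generator
\begin{equation*}
\widetilde{f}(t,\omega,y,z):=f\bigl(t,X^{s,\varphi;u}(t,\omega),X_1^{s,\varphi;u}(t,\omega),X_2^{s,\varphi;u}(t,\omega),y,z,u(t,\omega)\bigr),
\end{equation*}
and the terminal datum $\xi:=\phi(X^{s,\varphi;u}(T),X_1^{s,\varphi;u}(T))$. I would then verify, in turn, the three hypotheses of the Pardoux--Peng theorem: (i) $\mathcal{F}_t^s$-measurability of $\widetilde{f}(\cdot,y,z)$ for each fixed $(y,z)$, which follows from (H4) together with the $\mathcal{F}_t^s$-adaptedness of $X^{s,\varphi;u}$, $X_1^{s,\varphi;u}$, $X_2^{s,\varphi;u}$ and $u$; (ii) uniform Lipschitz continuity of $\widetilde{f}$ in $(y,z)$, which is a direct consequence of (H5); (iii) the integrability conditions $\mathbb{E}\int_s^T|\widetilde{f}(t,0,0)|^2\,dt<\infty$ and $\mathbb{E}|\xi|^2<\infty$, obtained by combining the linear-growth estimate (H6) with the $L^2$ bounds on $X^{s,\varphi;u}$, $X_1^{s,\varphi;u}$, $X_2^{s,\varphi;u}$ established above.

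Once these conditions are verified, the classical Pardoux--Peng theorem produces a unique pair $(Y^{s,\varphi;u}(\cdot),Z^{s,\varphi;u}(\cdot))\in S^2_\mathcal{F}([s,T];\mathbb{R})\times L^2_\mathcal{F}([s,T];\mathbb{R})$ solving the BSDE with generator $\widetilde{f}$ and terminal value $\xi$, which by construction is exactly (\ref{controlled BSDDE}). The only subtle step I anticipate is the bookkeeping needed for the $L^2$ bound on $\widetilde{f}(t,0,0)$, since (H6) controls $|f(t,x,x_1,x_2,0,0,u)|$ by $C(1+|x|+|x_1|+|x_2|)$ uniformly in $u$ but the dependence of $\widetilde{f}$ on $u(\cdot)$ is random; however this is handled by noting that the estimate is pointwise in $u$, so taking squares, integrating in $t$ and using the second-moment bounds on $X^{s,\varphi;u},X_1^{s,\varphi;u},X_2^{s,\varphi;u}$ suffices, independently of any integrability assumption on $u(\cdot)$ itself.
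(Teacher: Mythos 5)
Your proposal is correct and follows essentially the same route as the paper, which simply notes that the result ``can be obtained from the classical BSDE theory, by Lemma 2.1'' (referring to \cite{ChenWu12} for details): once the forward trajectory is fixed, the delay terms are exogenous inputs and the equation is a standard Pardoux--Peng BSDE. Your write-up merely fills in the measurability, Lipschitz and integrability checks that the paper leaves implicit, and these are all handled correctly.
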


We now introduce some preliminaries in infinite dimension, which is also used in \cite{ChenWu12,FMT10,Mo98}. Let $C_b$ be the Banach space of all bounded uniformly continuous functions $\Phi:C([-\delta,0];\mathbb{R})\rightarrow\mathbb{R}$ with the sup norm
\begin{equation*}
||\Phi||_{C_b}:=\sup\limits_{\varphi\in C([-\delta,0];\mathbb{R})}|\Phi(\varphi)|,\ \Phi\in C_b.
\end{equation*}
Define the operator $P_t:C_b\rightarrow C_b,t\geq0$ on $C_b$ by
\begin{equation*}
P_t(\Phi)(\varphi):=\mathbb{E}\big[\Phi\big(X^{0,\varphi;u}(t))\big],\ t\geq0,\ \Phi\in C_b,\ \varphi\in C([-\delta,0];\mathbb{R}).
\end{equation*}
We also define an operator $\mathcal{A}:D(\mathcal{A})\subset C_b\rightarrow C_b$ of $\{P_t\}_{t\geq0}$ by the weak limit
\begin{equation*}
\mathcal{A}(\Phi)(\varphi):=w\mbox{ -}\lim\limits_{t\rightarrow0+}\frac{P_t(\Phi)(\varphi)-\Phi(\varphi)}{t},
\end{equation*}
here $\Phi$ belongs to the domain $D(\mathcal{A})$ of $\mathcal{A}$ if and only if the above weak limit exists in $C_b$. Then we can obtain easily that (\cite{Mo98})
\begin{equation*}
\frac{d}{dt}P_t(\Phi)=\mathcal{A}(P_t(\Phi))=P_t(\mathcal{A}(\Phi)),\ t\geq0,
\end{equation*}
for any $\Phi\in D(\mathcal{A})$.

Let $F_n:=\{\kappa1_{\{0\}}:\kappa\in\mathbb{R}\}$ and $C\oplus F_n:=\{\varphi+\kappa1_{\{0\}}:\varphi\in C([-\delta,0];\mathbb{R}),\kappa\in\mathbb{R}\}$ with norm $||\varphi+\kappa1_{\{0\}}||:=||\varphi||_C+|\kappa|$ for $\kappa\in\mathbb{R}$, where $1_{\{0\}}:[-\delta,0]\rightarrow\mathbb{R}$ is defined by
\begin{equation*}
1_{\{0\}}(\theta):=\left\{\begin{array}{l}
                   0,\mbox{ for }\theta\in[-\delta,0),\\
                   1,\mbox{ for }\theta=0.
                   \end{array}\right.
\end{equation*}
For a Borel measurable function $\Phi:C([-\delta,0];\mathbb{R})\rightarrow\mathbb{R}$, we also define
\begin{equation*}
\mathcal{S}(\Phi)(\varphi):=\lim\limits_{t\rightarrow0}\frac{\Phi(\tilde{\varphi}_t)-\Phi(\varphi)}{t},
\end{equation*}
for all $\varphi\in C([-\delta,0];\mathbb{R})$, where $\tilde{\varphi}(\cdot):[-\delta,T]\rightarrow\mathbb{R}$ is an extension of $\varphi$ defined by
\begin{equation*}
\tilde{\varphi}(t):=\left\{\begin{array}{l}
                         \varphi(t),\mbox{ }t\in[-\delta,0),\\
                         \varphi(0),\mbox{ }t\geq0,
                    \end{array}\right.
\end{equation*}
and $\tilde{\varphi}_t\in C([-\delta,0];\mathbb{R})$ is defined by $\tilde{\varphi}_t(\tau):=\big\{\tilde{\varphi}(t+\tau);\tau\in[-\delta,0]\big\}$. Let $\hat{D}(\mathcal{S})$, the domain of $\mathcal{S}$, the set of $\Phi:C([-\delta,0];\mathbb{R})\rightarrow\mathbb{R}$ such that the above limit exists for each $\varphi\in C([-\delta,0];\mathbb{R})$. Define $D(\mathcal{S})$ as the set of all functions $\Phi:[0,T]\times C([-\delta,0];\mathbb{R})\rightarrow\mathbb{R}$ such that $\Phi(t,\cdot)\in\hat{D}(\mathcal{S}),t\in[0,T]$.

In addition, for each sufficiently smooth function $\Phi$, we denote its first and second Fr\'{e}chet derivatives with respect to $\varphi\in C([-\delta,0];\mathbb{R})$ by $D\Phi$ and $D^2\Phi$. And let $C^{1,2}_{lip}([0,T]\times C)$ be the set of functions $\Phi:[0,T]\times C([-\delta,0];\mathbb{R})\rightarrow\mathbb{R}$ such that $\frac{\partial\Phi}{\partial t},D\Phi,D^2\Phi$ exist and they are globally bounded and Lipschitz continuous.

Then we have the following formula for the generator $\mathcal{A}$, which is a slight modification of Theorem 4.2 in \cite{ChenWu12}, which is also can be seen in \cite{Mo98}.

\begin{lemma}
Suppose that $\Phi\in C^{1,2}_{lip}([0,T]\times C)\cap D(\mathcal{S})$. Let $u(\cdot)\in\mathcal{U}[s,T]$, and $\{X^{s,\varphi;u}(t),t\in[s,T]\}$ be the
Markov solution process to the SDDE (\ref{controlled SDDE}) and (\ref{definition of delay terms}), with the initial data $(s,\varphi)\in[0,T)\times C([-\delta,0];\mathbb{R})$. Then $\Phi\in D(\mathcal{A})$ and for each $\varphi\in C([-\delta,0];\mathbb{R})$, we have
\begin{equation}\label{operator}
\begin{aligned}
\mathcal{A}(\Phi)(\varphi)=\mathcal{S}(\Phi)(\varphi)+\overline{D\Phi(\varphi)}\big(b(s,\varphi,u(\cdot))1_{\{0\}}\big)
                          +\frac{1}{2}\overline{D^2\Phi(\varphi)}\big(\sigma^2(s,\varphi,u(\cdot))1_{\{0\}}\big),
\end{aligned}
\end{equation}
where $\overline{D\Phi(\varphi)}:C\oplus F_n\rightarrow\mathbb{R},\ \overline{D^2\Phi(\varphi)}:(C\oplus F_n)\times(C\oplus F_n)\rightarrow\mathbb{R}$ are the continuous linear and bilinear extensions of $D\Phi(\varphi),D^2\Phi(\varphi)$, respectively.
\end{lemma}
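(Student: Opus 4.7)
The plan is to derive \eqref{operator} from a second-order Itô-type expansion of $\Phi$ along the path segment, splitting the increment into a deterministic ``time-shift'' contribution and a stochastic ``endpoint perturbation''. Writing $X_t:=X^{s,\varphi;u}_{s+t}\in C([-\delta,0];\mathbb{R})$ and using the fact that $\tilde{\varphi}_t$ is deterministic, I would decompose
\[
P_t(\Phi)(\varphi)-\Phi(\varphi)=\mathbb{E}\bigl[\Phi(X_t)-\Phi(\tilde{\varphi}_t)\bigr]+\bigl[\Phi(\tilde{\varphi}_t)-\Phi(\varphi)\bigr].
\]
Dividing by $t$ and letting $t\to 0^+$, the second bracket converges pointwise to $\mathcal{S}(\Phi)(\varphi)$ by the very definition of $\mathcal{S}$, so the problem reduces to analyzing the stochastic bracket and showing the convergence is weak in $C_b$ (which will in particular yield $\Phi\in D(\mathcal{A})$).

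For the stochastic bracket I would exploit the fact that the perturbation $\xi_t:=X_t-\tilde{\varphi}_t$ is supported on the shrinking interval $[-t,0]$: for $\tau\leq -t$ both segments equal $\varphi(t+\tau)$, while for $\tau\in[-t,0]$ one has $\xi_t(\tau)=X^{s,\varphi;u}(s+t+\tau)-\varphi(0)$. Standard $L^p$-estimates under (H1)--(H3) give $\mathbb{E}\|\xi_t\|_C^p\leq C_p t^{p/2}$ for $p\geq 2$. A second-order Fréchet expansion in $C([-\delta,0];\mathbb{R})$ then gives
\[
\Phi(X_t)-\Phi(\tilde{\varphi}_t)=D\Phi(\tilde{\varphi}_t)(\xi_t)+\tfrac{1}{2}D^2\Phi(\tilde{\varphi}_t)(\xi_t,\xi_t)+R_t,
\]
with $|R_t|\leq C\|\xi_t\|_C^3$ from the Lipschitz continuity of $D^2\Phi$, so $t^{-1}\mathbb{E}|R_t|\to 0$. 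Next I would apply Itô's formula to represent $X^{s,\varphi;u}(s+t+\tau)-\varphi(0)$ as a drift integral plus a stochastic integral; the expectation of the martingale part vanishes, the drift at the leading order contributes $tb(s,\varphi,u(s))$ localized near $\tau=0$, and the quadratic variation contributes $t\sigma^2(s,\varphi,u(s))$ localized near $\tau=0$. Replacing $D\Phi(\tilde{\varphi}_t)$ and $D^2\Phi(\tilde{\varphi}_t)$ by $D\Phi(\varphi)$ and $D^2\Phi(\varphi)$ costs only $o(1)$ thanks to Lipschitzness and $\|\tilde{\varphi}_t-\varphi\|_C\to 0$.

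The main obstacle is to pass rigorously from these ``continuous bumps'' $\xi_t$ (supported on $[-t,0]$) to the discontinuous endpoint element $1_{\{0\}}\in C\oplus F_n$ appearing in \eqref{operator}. This is precisely the role of the unique continuous linear and bilinear extensions $\overline{D\Phi(\varphi)}$ and $\overline{D^2\Phi(\varphi)}$ on $C\oplus F_n$: as the support of $\xi_t$ collapses to $\{0\}$, the pairing $D\Phi(\varphi)(\xi_t)$ localizes to $\eta_t\cdot\overline{D\Phi(\varphi)}(1_{\{0\}})$ modulo $o(t)$, where $\eta_t:=X^{s,\varphi;u}(s+t)-\varphi(0)$, and analogously the dominant contribution to $(\xi_t,\xi_t)$ is $\eta_t^2\cdot\overline{D^2\Phi(\varphi)}(1_{\{0\}},1_{\{0\}})$. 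Combined with $\mathbb{E}\eta_t=tb(s,\varphi,u(s))+o(t)$ and $\mathbb{E}\eta_t^2=t\sigma^2(s,\varphi,u(s))+o(t)$ from the SDDE, this yields \eqref{operator}. The concentration step is the delicate one and is carried out essentially as in Theorem 4.2 of Chen--Wu \cite{ChenWu12} and as in Theorem V.2.2 of Mohammed \cite{Mo98}; the only modification needed here is the verification that the weak $C_b$-limit in the definition of $\mathcal{A}$ holds uniformly over $\varphi$ on bounded sets, which follows from the Lipschitz and boundedness hypotheses on $\partial_t\Phi$, $D\Phi$, $D^2\Phi$.
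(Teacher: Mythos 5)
The paper does not actually prove this lemma: it is quoted as a slight modification of Theorem 4.2 in \cite{ChenWu12} (see also \cite{Mo98}), and your sketch reconstructs precisely the argument of those references --- the split of $P_t(\Phi)(\varphi)-\Phi(\varphi)$ into the deterministic time-shift (yielding $\mathcal{S}(\Phi)(\varphi)$) plus the stochastic endpoint perturbation $\xi_t$ supported on $[-t,0]$, the second-order Fr\'{e}chet--Taylor expansion with remainder of order $t^{3/2}$, and the concentration of $\xi_t$ onto $1_{\{0\}}$ handled through the canonical extensions $\overline{D\Phi(\varphi)}$, $\overline{D^2\Phi(\varphi)}$ on $C\oplus F_n$. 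You correctly single out that concentration step as the only genuinely delicate point and defer it to \cite{Mo98} and \cite{ChenWu12}, which is the same level of rigor the paper itself supplies, so the proposal is consistent with the paper's (cited) proof.
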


Now, we turn to consider {\bf Problem (SROCPD)} by Belmann's dynamic programming. In general, the value function $V(s,\varphi)$ defined in (\ref{value function for FBSOCPD}) may depend on the initial path $\varphi\in
C([-\delta,0];\mathbb{R})$ in a complicated way. From Theorem 3.7 of \cite{ChenWu12}, we know that the value function satisfies the following generalized dynamic programming principle (DPP):
\begin{equation}\label{generalized dynamic programming principle by Chen and Wu}
\begin{aligned}
V(s,\varphi)&=-\esssup\limits_{u\in\mathcal{U}[s,T]}\mathbb{E}^{s,\varphi;u}
               \Big[\int_s^{\hat{s}}f\big(t,X^{s,\varphi;u}(t),X_1^{s,\varphi;u}(t),X_2^{s,\varphi;u}(t),Y^{s,\varphi;u}(t),\\
              &\qquad\qquad\qquad\qquad\qquad Z^{s,\varphi;u}(t),u(t)\big)dt+V\big(\hat{s},X^{s,\varphi;u}_{\hat{s}}\big)\Big],\ \forall\hat{s}\in[s,T].
\end{aligned}
\end{equation}
Here $X^{s,\varphi;u}_{\hat{s}}$ is the map $X^{s,\varphi;u}_{\hat{s}}:[-\delta,0]\rightarrow\mathbb{R}$ defined by $X^{s,\varphi;u}_{\hat{s}}(\tau):=X^{s,\varphi;u}(\hat{s}+\tau)$.

The following result is an immediate corollary of Theorem 4.9 in \cite{ChenWu12}.

\begin{theorem}
{\bf (Infinite Dimensional Generalized HJB Equation)} Assume that the value function $V(s,\varphi)\in C^{1,2}_{lip}([0,T]\times C)\cap D(\mathcal{S})$ for {\bf Problem (SROCPD)}, then $V(s,\varphi)$ solves the following PDE
\begin{equation}\label{BSEE}
\left\{
\begin{aligned}
&\frac{\partial V}{\partial s}(s,\varphi)+\inf\limits_{u\in\mathbb{U}}\Big\{\mathcal{A}^uV(s,\varphi)
 +\tilde{f}\big(s,\varphi,-V(s,\varphi),-\nabla_0V(s,\varphi)\tilde{\sigma}(s,\varphi,u),u\big)\Big\}\\
&=0,\hspace{32mm}\forall(s,\varphi)\in[0,T)\times C([-\delta,0];\mathbb{R}),\\
&V(T,\varphi)=-\phi(\varphi),\quad \forall \varphi\in C([-\delta,0];\mathbb{R}),
\end{aligned}
\right.
\end{equation}
where $\nabla_0V(s,\varphi)\equiv\nabla_\varphi V(s,\varphi)(\{0\}),\nabla_\varphi V(s,\varphi)$ is the gradient of $V$ with respect to $\varphi$ at point $(s,\varphi)$, and
\begin{equation*}
\begin{aligned}
&\tilde{\sigma}(s,\varphi,u):=\sigma(s,x(\varphi),x_1(\varphi),x_2(\varphi),u),\\ &\tilde{f}(s,\varphi,-V(s,\varphi),-\nabla_0V(s,\varphi)\tilde{\sigma}(s,\varphi,u),u)\\
&\quad:=f\big(s,x(\varphi),x_1(\varphi),x_2(\varphi),-V(s,\varphi),-\nabla_0V(s,\varphi)\tilde{\sigma}(s,\varphi,u),u\big),
\end{aligned}
\end{equation*}
with
\begin{equation*}
x=x(\varphi):=\varphi(0),\ x_1=x_1(\varphi):=\int_{-\delta}^0e^{\lambda\tau}\varphi(\tau)d\tau,\ x_2=x_2(\varphi):=\varphi(-\delta).
\end{equation*}
\end{theorem}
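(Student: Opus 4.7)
The plan is to derive (2.13) directly from the generalized dynamic programming principle (2.12) combined with the infinite-dimensional Dynkin/It\^o formula encoded in Lemma 2.3, mirroring the derivation of the classical HJB equation but in the path-space setting. Since $V\in C^{1,2}_{lip}([0,T]\times C)\cap D(\mathcal{S})$, Lemma 2.3 identifies $V(s,\cdot)$ as lying in $D(\mathcal{A})$ and supplies an explicit expression for $\mathcal{A}^u V(s,\varphi)$, namely the right-hand side of (2.11). This is exactly the ingredient needed to turn the essentially probabilistic statement (2.12) into a local-in-time PDE.

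Fix $(s,\varphi)\in[0,T)\times C([-\delta,0];\mathbb{R})$ and an admissible control $u(\cdot)$ taking a constant value $u\in\mathbb{U}$ near $s$. For $\hat s\in(s,T]$, apply the functional It\^o formula to $V(t,X_t^{s,\varphi;u})$ using Lemma 2.3 to obtain
\begin{equation*}
V(\hat s, X_{\hat s}^{s,\varphi;u}) = V(s,\varphi)+\int_s^{\hat s}\Big[\tfrac{\partial V}{\partial t}+\mathcal{A}^u V\Big](t,X_t^{s,\varphi;u})\,dt+\int_s^{\hat s}\nabla_0 V(t,X_t^{s,\varphi;u})\,\tilde\sigma(t,X_t^{s,\varphi;u},u)\,dW(t),
\end{equation*}
where the stochastic-integral term comes from the Fr\'echet derivative evaluated in the $1_{\{0\}}$ direction, and the $\mathcal{S}$-term is absorbed in $\mathcal{A}^u V$ via (2.11).

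Next, invoke the BSDDE (1.3): by the flow property of the value function one has $Y^{s,\varphi;u}(t)=-V(t,X_t^{s,\varphi;u})$ along an optimal trajectory, and in general $Y^{s,\varphi;u}(s)\le -V(s,\varphi)$ with equality at the optimum, by (2.12). Comparing the martingale part of the It\^o decomposition above with the $-Z\,dW$ term of the BSDDE then forces the identification
\begin{equation*}
Z^{s,\varphi;u}(t)=-\nabla_0 V(t,X_t^{s,\varphi;u})\,\tilde\sigma(t,X_t^{s,\varphi;u},u),
\end{equation*}
which explains the argument of $\tilde f$ appearing in (2.13). Substituting this into the drift of the BSDDE, equating drifts in the two representations of $Y-Y(s)$, dividing by $\hat s-s$, and letting $\hat s\downarrow s$ yields
\begin{equation*}
\tfrac{\partial V}{\partial s}(s,\varphi)+\mathcal{A}^u V(s,\varphi)+\tilde f\big(s,\varphi,-V(s,\varphi),-\nabla_0 V(s,\varphi)\tilde\sigma(s,\varphi,u),u\big)\ge 0,
\end{equation*}
with equality at an optimal $u$. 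Taking the infimum over $u\in\mathbb{U}$ (which is legitimate because $u$ was arbitrary and constant near $s$, so the essinf in (2.12) reduces to a pointwise inf) produces (2.13). The terminal condition $V(T,\varphi)=-\phi(\varphi)$ is built into the formulation (1.7).

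The main obstacle is the rigorous passage between the essinf in the DPP and the pointwise inf in the PDE, together with the exchange of $\lim_{\hat s\downarrow s}$ with expectation in the two-sided inequality above. Both steps require the uniform regularity encoded in $C^{1,2}_{lip}\cap D(\mathcal{S})$ together with (H1)--(H6) to control the remainder terms in the functional It\^o expansion and to guarantee that the weak-limit definition of $\mathcal{A}$ in fact produces a strong pointwise limit along the relevant trajectories. Once these technicalities are settled, the statement is precisely Theorem 4.9 of \cite{ChenWu12} specialized to the notation adopted here, which is why it is presented as an immediate corollary.
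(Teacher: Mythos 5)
The paper does not actually prove this theorem: it is stated as an immediate corollary of Theorem 4.9 of \cite{ChenWu12}, so your derivation supplies more than the text does, and its architecture --- the generalized DPP (\ref{generalized dynamic programming principle by Chen and Wu}) combined with the Dynkin-type formula (\ref{operator}), a drift/martingale identification, and the limit $\hat s\downarrow s$ --- is indeed the standard route taken in the cited reference. As a standalone argument, however, your sketch has one genuine gap at its central step. For a non-optimal control $u$ the process $-V(t,X^{s,\varphi;u}_t)$ is \emph{not} the $Y$-component of the BSDDE (\ref{controlled BSDDE}), so there are no ``two representations of $Y-Y(s)$'' whose drifts you may equate; that identification is legitimate only along an optimal trajectory, where it gives the equality case but not the inequality for arbitrary $u$. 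Because the generator $f$ depends on $(Y,Z)$, the DPP cannot be used additively either: one must work with the backward semigroup, i.e.\ compare the solution $(Y,Z)$ of the BSDDE on $[s,\hat s]$ with terminal datum $-V(\hat s,X^{s,\varphi;u}_{\hat s})$ against the pair $\bigl(-V(t,X^{s,\varphi;u}_t),\,-\nabla_0V\,\tilde\sigma\bigr)$ produced by It\^o's formula, and then control $|Y(t)+V(t,X^{s,\varphi;u}_t)|$ and the $L^2$-distance between $Z$ and $-\nabla_0V\tilde\sigma$ as $\hat s\downarrow s$ via an a priori stability estimate for BSDEs; this is precisely where (H5) and the regularity $C^{1,2}_{lip}\cap D(\mathcal{S})$ are consumed. ``Comparing the martingale parts'' does not by itself force $Z=-\nabla_0V\tilde\sigma$ off the optimum.

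A second point to reconcile before relying on your final display: with the sign conventions of this paper ($V=\essinf_u(-Y^u)$ and $-dY=f\,dt-Z\,dW$), the drift comparison you describe produces $\frac{\partial V}{\partial s}+\mathcal{A}^uV-\tilde f\ge 0$, i.e.\ the generator enters with a \emph{minus} sign. This is what the finite-dimensional counterpart (\ref{generalized HJB equation})--(\ref{generalized Hamiltonian function}) encodes, since there $G$ is evaluated at the negated derivatives of $V$ and hence $\sup_uG$ equals the supremum of minus the second-order operator plus $\tilde f$. Your conclusion reproduces the $+\tilde f$ of (\ref{BSEE}) as printed, but it does not follow from the computation you outline; one of the two signs (yours or the displayed equation's) needs to be corrected for the argument to close.
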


Note that (\ref{BSEE}) is a PDE with terminal condition in the infinite dimensional space. One of the main target in this section is to find out that, under what conditions it can be reduced to a finite dimensional one. Inspired by \cite{LR03}, one might expect that the value function $V(s,\varphi)$ depends on $\varphi$ only through the first two functionals $x(\varphi),x_1(\varphi)$, that is,
\begin{equation}\label{special cost functional}
V(s,\varphi)=V(s,x(\varphi),x_1(\varphi))=V(s,x,x_1),
\end{equation}
and is independent of the third functional $x_2(\varphi)$. If this is the case, the operator $\mathcal{A}$ in (\ref{operator}) is a differential operator and the equation (\ref{BSEE}) is a second order PDE in the finite dimensional space. For this, we first need the following delayed It\^{o}'s formula, whose proof can be seen in \cite{EOS00}.

\begin{lemma}
{\bf (Delayed It\^{o}'s Formula)} Let $g\in C^{1,2,1}([0,T]\times\mathbb{R}^2)$, processes $(X(\cdot),X_1(\cdot),X_2(\cdot))$ are defined by (\ref{controlled SDDE}) and (\ref{definition of delay terms}), then for given $u\in\mathbb{U}$, we have
\begin{equation}\label{Ito's formula}
\begin{aligned}
&dg(t,X(t),X_1(t))\\
&=\bigg\{\frac{\partial g}{\partial t}(t,X(t),X_1(t))+b(t,X(t),X_1(t),X_2(t),u)\frac{\partial g}{\partial x}(t,X(t),X_1(t))\\
&\qquad+\frac{1}{2}\sigma^2(t,X(t),X_1(t),X_2(t),u)\frac{\partial^2 g}{\partial x^2}(t,X(t),X_1(t))\\
&\qquad+\big[X(t)-\lambda X_1(t)-e^{-\lambda\delta}X_2(t)\big]\frac{\partial g}{\partial x_1}(t,X(t),X_2(t))\bigg\}dt\\
&\quad+\sigma(t,X(t),X_1(t),X_2(t),u)\frac{\partial g}{\partial x}(t,X(t),X_1(t))dW(t).
\end{aligned}
\end{equation}
\end{lemma}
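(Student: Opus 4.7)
The plan is to reduce this to the standard two-dimensional It\^{o} formula applied to the pair $(X(t), X_1(t))$, by first exhibiting $X_1(t)$ as a pathwise differentiable (bounded variation) process in $t$ and computing its dynamics explicitly. The key observation is that although $X(t)$ is a genuine It\^{o} process with a nontrivial diffusion term, $X_1(t)$ is an integral average of past values of $X$, so it is absolutely continuous in $t$ and contributes no stochastic integral term. Hence the quadratic variation $\langle X_1\rangle$ and the cross-variation $\langle X, X_1\rangle$ both vanish, and the only second-order correction in the It\^{o} expansion comes from $(dX(t))^2 = \sigma^2(\cdot)\,dt$.

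First I would compute $dX_1(t)$ explicitly. Changing variables $u = t+\tau$ in the definition gives
\begin{equation*}
X_1(t) = \int_{-\delta}^0 e^{\lambda\tau} X(t+\tau)\,d\tau = e^{-\lambda t}\int_{t-\delta}^{t} e^{\lambda u} X(u)\,du.
\end{equation*}
Differentiating the right-hand side in $t$ (using the Leibniz rule on the integral with moving endpoints and the product rule for the prefactor $e^{-\lambda t}$) yields
\begin{equation*}
\frac{dX_1(t)}{dt} = -\lambda X_1(t) + X(t) - e^{-\lambda\delta} X(t-\delta) = X(t) - \lambda X_1(t) - e^{-\lambda\delta} X_2(t),
\end{equation*}
so $X_1(\cdot)$ has finite variation and $dX_1(t) = [X(t) - \lambda X_1(t) - e^{-\lambda\delta} X_2(t)]\,dt$.

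Next I would apply the classical two-dimensional It\^{o} formula to the $C^{1,2,1}$ function $g(t,x,x_1)$ along the semimartingale $(X(t), X_1(t))$. Since $X_1$ is of bounded variation, all terms involving $d\langle X_1\rangle$ and $d\langle X, X_1\rangle$ drop out, and only $d\langle X\rangle_t = \sigma^2(t, X(t), X_1(t), X_2(t), u)\,dt$ survives among the quadratic variation contributions. Substituting the SDDE for $dX(t)$ from (\ref{controlled SDDE}) and the expression for $dX_1(t)$ derived above, and collecting the $dt$ and $dW(t)$ parts, reproduces exactly (\ref{Ito's formula}).

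I do not expect any real obstacle here: the only delicate point is the pathwise differentiation of $X_1(t)$, which requires the continuity of $X(\cdot)$ on $[s-\delta,T]$ (guaranteed by Lemma 2.1 applied under (H1)--(H3)) to justify the Leibniz rule, and the justification that the quadratic variation of a continuous finite-variation process vanishes. Both are standard. The rest is bookkeeping, and the proof is essentially identical to the one in \cite{EOS00} to which the lemma refers.
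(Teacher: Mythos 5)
Your proof is correct and follows essentially the same route as the reference \cite{EOS00} that the paper cites for this lemma: your explicit computation $dX_1(t)=\big[X(t)-\lambda X_1(t)-e^{-\lambda\delta}X_2(t)\big]dt$ is exactly the identity (\ref{applying Ito's formula to X-1}) that the paper itself invokes later, and once $X_1(\cdot)$ is recognized as a continuous finite-variation process the classical two-dimensional It\^{o} formula yields (\ref{Ito's formula}) with only the $d\langle X\rangle_t=\sigma^2\,dt$ correction surviving. There are no gaps.
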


The following theorem takes the independence of $x_2$ for the value function $V$ as an assumption, and states a stochastic verification theorem via the finite dimensional counterpart to (\ref{BSEE}).
\begin{theorem}
{\bf (Stochastic Verification Theorem)}
Suppose that the following PDE
\begin{equation}\label{generalized HJB equation}
\left\{
\begin{aligned}
&-\frac{\partial V}{\partial s}(s,x,x_1)+\sup\limits_{u\in\mathbf{U}}G\Big(s,x,x_1,x_2,u,-V(s,x,x_1),-\frac{\partial V}{\partial x}(s,x,x_1),\\
&\qquad\qquad\qquad\qquad\qquad-\frac{\partial^2 V}{\partial x^2}(s,x,x_1),-\frac{\partial V}{\partial x_1}(s,x,x_1)\Big)=0,\ \forall x_2\in\mathbb{R},\\
&V(T,x,x_1)=-\phi(x,x_1),
\end{aligned}
\right.
\end{equation}
admits a sufficiently smooth solution $V$ depends on $(s,x,x_1)$ only and $V(s,x,x_1)\in C^{1,2,1}([0,T]\times\mathbb{R}^2)$, where the generalized Hamiltonian function
$G:[0,T]\times\mathbb{R}^3\times\mathbb{U}\times\mathbb{R}^4\rightarrow\mathbb{R}$
is defined as
\begin{equation}\label{generalized Hamiltonian function}
\begin{aligned}
&G(s,x,x_1,x_2,u,k,p,R,q):=b(s,x,x_1,x_2,u)p+\frac{1}{2}\sigma^2(s,x,x_1,x_2,u)R\\
&\quad+\big(x-\lambda x_1-e^{-\lambda\delta}x_2\big)q+f(s,x,x_1,x_2,k,\sigma(s,x,x_1,x_2,u)p,u).
\end{aligned}
\end{equation}
Then
\begin{equation}\label{stochastic verification theorem-1}
V(s,x,x_1)\leq J(s,x,x_1;u(\cdot)),\quad\forall u(\cdot)\in\mathcal{U}[s,T],\ (s,x,x_1)\in[0,T]\times\mathbb{R}^2.
\end{equation}
Furthermore, an admissible pair $(X^*(\cdot),u^*(\cdot))\equiv(X^{s,x,x_1;u^*}(\cdot),u^*(\cdot))$ is an optimal pair for {\bf Problem (SROCPD)} if and only if
\begin{equation}\label{stochastic verification theorem-2}
\begin{aligned}
&\ G\Big(t,X^*(t),X_1^*(t),X_2^*(t),u^*(t),-V(t,X^*(t),X_1^*(t)),-\frac{\partial V}{\partial x}(t,X^*(t),X_1^*(t)),\\
&\quad\ -\frac{\partial^2V}{\partial x^2}(t,X^*(t),X_1^*(t)),-\frac{\partial V}{\partial x_1}(t,X^*(t),X_1^*(t))\Big)\\
=&\ \max\limits_{u\in\mathbb{U}}G\Big(t,X^*(t),X_1^*(t),X_2^*(t),u,-V(t,X^*(t),X_1^*(t)),-\frac{\partial V}{\partial x}(t,X^*(t),X_1^*(t)),\\
&\qquad\quad-\frac{\partial^2V}{\partial x^2}(t,X^*(t),X_1^*(t)),-\frac{\partial V}{\partial x_1}(t,X^*(t),X_1^*(t))\Big),\ a.e.t\in[s,T],a.s.,
\end{aligned}
\end{equation}
where $X_1^*(t)\equiv X_1^{s,x,x_1;u^*}(t),X_2^*(t)\equiv X_2^{s,x,x_1;u^*}(t)$ are defined as (\ref{definition of delay terms}).
\end{theorem}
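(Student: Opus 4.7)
The plan is to mimic the standard verification-theorem argument, adapted to the delayed setting: I will apply the delayed It\^{o} formula (Lemma 2.6) to $V$ along the controlled trajectory, use the HJB equation (\ref{generalized HJB equation}) to turn the resulting drift into an inequality, and invoke the BSDE comparison theorem to conclude.

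For arbitrary $u(\cdot)\in\mathcal{U}[s,T]$, I first apply Lemma 2.6 with $g=V$ to $V(t,X^{s,\varphi;u}(t),X_1^{s,\varphi;u}(t))$ on $[s,T]$. Writing $(X,X_1,X_2)$ for the controlled state and its two delay functionals, this yields
\begin{equation*}
dV=\Big\{V_s+bV_x+\tfrac{1}{2}\sigma^2V_{xx}+(X-\lambda X_1-e^{-\lambda\delta}X_2)V_{x_1}\Big\}dt+\sigma V_x\,dW.
\end{equation*}
Setting $\tilde Y(t):=-V(t,X(t),X_1(t))$ and $\tilde Z(t):=-\sigma(t,X,X_1,X_2,u)V_x(t,X(t),X_1(t))$, the terminal condition in (\ref{generalized HJB equation}) gives $\tilde Y(T)=\phi(X(T),X_1(T))=Y^{s,\varphi;u}(T)$. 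Substituting $(k,p,R,q)=(-V,-V_x,-V_{xx},-V_{x_1})$ into the generalized Hamiltonian (\ref{generalized Hamiltonian function}) and exploiting that (\ref{generalized HJB equation}) holds for every $x_2\in\mathbb{R}$, I obtain the pointwise drift inequality
\begin{equation*}
V_s+bV_x+\tfrac{1}{2}\sigma^2V_{xx}+(X-\lambda X_1-e^{-\lambda\delta}X_2)V_{x_1}\geq f(t,X,X_1,X_2,\tilde Y,\tilde Z,u),
\end{equation*}
with equality precisely when the maximization (\ref{stochastic verification theorem-2}) is attained. Combining with the It\^{o} expansion and rearranging, this becomes
\begin{equation*}
-d\tilde Y(t)\geq f(t,X,X_1,X_2,\tilde Y,\tilde Z,u)\,dt-\tilde Z(t)\,dW(t),\qquad \tilde Y(T)=Y^{s,\varphi;u}(T).
\end{equation*}

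Next I compare $\tilde Y$ with $Y^{s,\varphi;u}$. Once $X(\cdot)$, and hence its delayed value $X_2(t)=X(t-\delta)$, is fixed, the BSDDE (\ref{controlled BSDDE}) is a classical BSDE with random coefficients whose generator is Lipschitz in $(y,z)$ by (H5). The standard comparison theorem for BSDEs then yields $\tilde Y(s)\geq Y^{s,\varphi;u}(s)$, i.e.\ $V(s,x,x_1)\leq -Y^{s,\varphi;u}(s)=J(s,x,x_1;u(\cdot))$, which is (\ref{stochastic verification theorem-1}). For the if-and-only-if characterization: if $u^*$ saturates (\ref{stochastic verification theorem-2}) a.e., equality replaces the drift inequality, so $(\tilde Y,\tilde Z)$ solves the same BSDDE as $(Y^{s,\varphi;u^*},Z^{s,\varphi;u^*})$ with matching terminal data, and uniqueness gives $V(s,x,x_1)=J(s,x,x_1;u^*(\cdot))$. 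Conversely, optimality of $u^*$ forces $\tilde Y(s)=Y^{s,\varphi;u^*}(s)$, and a strict version of the comparison theorem then compels the two drivers to coincide $dt\otimes d\mathbb{P}$-a.e., which is (\ref{stochastic verification theorem-2}).

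The main obstacle will lie at the interface between the BSDDE and a plain BSDE in the comparison step: I must check that the comparison theorem accommodates the exogenous delayed input $X_2(\cdot)$ and a random generator, and verify $\tilde Z\in L^2_{\mathcal{F}}([s,T];\mathbb{R})$ (routine given the regularity assumed on $V$ and the linear growth of $\sigma$). The converse direction also requires a strict-comparison argument in order to upgrade equality of the $Y$-values at time $s$ into the pointwise optimality condition (\ref{stochastic verification theorem-2}) rather than merely an integrated version.
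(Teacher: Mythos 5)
Your proposal is correct, and it diverges from the paper's proof at exactly the step that matters for a \emph{recursive} cost. Both arguments begin identically: apply the delayed It\^{o} formula to $V(t,X^u(t),X_1^u(t))$ and use the HJB equation (\ref{generalized HJB equation}) to control the sign of the drift. The paper then closes the argument by asserting that
$-\mathbb{E}\big[\phi(X^u(T),X_1^u(T))+\int_s^T f\big(t,\cdot,-V,-\sigma\frac{\partial V}{\partial x},u(t)\big)dt\big]$
equals $J(s,x,x_1;u(\cdot))$ ``by the uniqueness of the solution to the BSDDE''; this requires identifying $\big(-V(t,X^u,X_1^u),\,-\sigma\frac{\partial V}{\partial x}\big)$ with $(Y^u,Z^u)$, which uniqueness delivers only when the pair solves the BSDDE with generator $f$ exactly --- i.e.\ only when the drift identity holds with equality, which is precisely the maximum condition one is trying to characterize. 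For a generator genuinely depending on $(y,z)$ this identification is not available for an arbitrary $u$, so the paper's ``third $=$'' is a delicate (arguably gapped) step. Your route replaces it with the BSDE comparison theorem: $\tilde Y=-V(\cdot,X,X_1)$ is a supersolution of the BSDDE with matching terminal datum, hence $\tilde Y(s)\geq Y^u(s)$, giving (\ref{stochastic verification theorem-1}); uniqueness in the equality case gives the ``if'' direction, and strict comparison gives the ``only if''. This is the standard and fully rigorous mechanism for recursive utilities (in the spirit of El Karoui--Peng--Quenez), at the price of having to verify that the comparison and strict-comparison theorems apply to the random, Lipschitz-in-$(y,z)$ generator obtained after freezing $X$, $X_1$, $X_2$, and that $\tilde Z\in L^2_{\mathcal F}([s,T];\mathbb{R})$ --- checks you correctly flag. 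The paper's converse avoids strict comparison by deducing that an integrated nonpositive quantity vanishes, hence vanishes a.e., but it rests on the same identification; both proofs also implicitly assume that the classical HJB solution coincides with the value function when reading ``optimal'' as $J(u^*)=V$. Net: same skeleton, but your comparison-theorem step is a genuine and preferable substitute for the paper's uniqueness appeal.
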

\begin{proof}
For any $u(\cdot)\in\mathcal{U}[s,T]$ with the corresponding state $X^u(\cdot)\equiv X^{s,x,x_1;u}(\cdot)$ and $X_1^u(\cdot)\equiv X_1^{s,x,x_1;u}(\cdot),X_2^u(\cdot)\equiv X_2^{s,x,x_1;u}(\cdot)$ defined as (\ref{definition of delay terms}), applying delayed It\^{o}'s formula (\ref{Ito's formula}) to $V(t,X^u(t),X_1^u(t))$, we obtain that
\begin{equation*}
\begin{aligned}
V(s,x,x_1)&=-\mathbb{E}\phi\big(X^u(T),X_1^u(T)\big)-\mathbb{E}\int_s^T\bigg\{\frac{\partial V}{\partial t}(t,X^u(t),X_1^u(t))\\
&\quad+b(t,X^u(t),X_1^u(t),X_2^u(t),u(t))\frac{\partial V}{\partial x}(t,X^u(t),X_1^u(t))\\
&\quad+\frac{1}{2}\big(\sigma(t,X^u(t),X_1^u(t),X_2^u(t),u(t))\big)^2\frac{\partial^2 V}{\partial x^2}(t,X^u(t),X_1^u(t))\\
&\quad+\Big[X^{s,x,x_1;u}(t)-\lambda X_1^u(t)-e^{-\lambda\delta}X_2^u(t)\Big]
 \frac{\partial V}{\partial x_1}(t,X^u(t),X_1^u(t))\bigg\}dt
\end{aligned}
\end{equation*}
\begin{equation*}
\begin{aligned}&=-\mathbb{E}\phi\big(X^u(T),X_1^u(T)\big)
 +\mathbb{E}\int_s^T\bigg\{-\frac{\partial V}{\partial t}(t,X^u(t),X_1^u(t))\\
&\qquad+G\Big(t,X^u(t),X_1^u(t),X_2^u(t),u(t),-V(t,X^u(t),X_1^u(t)),\\
&\qquad\quad-\frac{\partial V}{\partial x}(t,X^u(t),X_1^u(t)),
 -\frac{\partial^2V}{\partial x^2}(t,X^u(t),X_1^u(t)),-\frac{\partial V}{\partial x_1}(t,X^u(t),X_1^u(t))\Big)\\
&\qquad-f\Big(t,X^u(t),X_1^u(t),X_2^u(t),-V(t,X^u(t),X_1^u(t)),\\
&\qquad\quad-\sigma(t,X^u(t),X_1^u(t),X_2^u(t),u(t))\frac{\partial V}{\partial x}(t,X^u(t),X_1^u(t),u(t)\Big)\bigg\}dt\\
&=\ J(s,x,x_1;u(\cdot))+\mathbb{E}\int_s^T\Big\{-\frac{\partial V}{\partial t}(t,X^u(t),X_1^u(t))\\
&\qquad+G\Big(t,X^u(t),X_1^u(t),X_2^u(t),u(t),-V(t,X^u(t),X_1^u(t)),\\
&\qquad\quad-\frac{\partial V}{\partial x}(t,X^u(t),X_1^u(t)),
 -\frac{\partial^2V}{\partial x^2}(t,X^u(t),X_1^u(t)),-\frac{\partial V}{\partial x_1}(t,X^u(t),X_1^u(t))\Big)\Big\}ds\\
&\leq\ J(s,x,x_1;u(\cdot))+\mathbb{E}\int_s^T\Big\{-\frac{\partial V}{\partial t}(t,X^u(t),X_1^u(t))\\
&\qquad+\max\limits_{u\in\mathbb{U}}G\Big(t,X^u(t),X_1^u(t),X_2^u(t),u(t),-V(t,X^u(t),X_1^u(t)),\\
&\qquad\quad-\frac{\partial V}{\partial x}(t,X^u(t),X_1^u(t)),-\frac{\partial^2V}{\partial x^2}(t,X^u(t),X_1^u(t)),
 -\frac{\partial V}{\partial x_1}(t,X^u(t),X_1^u(t))\Big)\Big\}ds\\
&=\ J(s,x,x_1;u(\cdot)).
\end{aligned}
\end{equation*}
The third ``=" in the above holds by the uniqueness of the solution to the BSDDE (\ref{controlled BSDDE}). Thus (\ref{stochastic verification theorem-1}) holds.
Next, applying the above inequality to $(X^*(\cdot),u^*(\cdot))$, we have
\begin{equation*}
\begin{aligned}
&V(s,x,x_1)=\ J(s,x,x_1;u^*(\cdot))+\mathbb{E}\int_s^T\Big\{-\frac{\partial V}{\partial t}(t,X^*(t),X_1^*(t))\\
&\qquad+G\Big(t,X^*(t),X_1^*(t),X_2^*(t),u^*(t),-V(t,X^*(t),X_1^*(t)),\\
&\qquad\quad-\frac{\partial V}{\partial x}(t,X^*(t),X_1^*(t)),-\frac{\partial^2V}{\partial x^2}(t,X^*(t),X_1^*(t)),
 -\frac{\partial V}{\partial x_1}(t,X^*(t),X_1^*(t))\Big)\Big\}ds.
\end{aligned}
\end{equation*}
The desired result follows immediately from the fact that
\begin{equation*}
\begin{aligned}
&-\frac{\partial V}{\partial t}(t,X^*(t),X_1^*(t))+G\Big(t,X^*(t),X_1^*(t),X_2^*(t),u^*(t),-V(t,X^*(t),X_1^*(t)),\\
&\quad-\frac{\partial V}{\partial x}(t,X^*(t),X_1^*(t),-\frac{\partial^2V}{\partial x^2}(t,X^*(t),X_1^*(t)),
 -\frac{\partial V}{\partial x_1}(t,X^*(t),X_1^*(t))\Big)\leq0,
\end{aligned}
\end{equation*}
which is due to PDE (\ref{generalized HJB equation}). The proof is complete.
\end{proof}

PDE (\ref{generalized HJB equation}) is the finite dimensional counterpart of (\ref{BSEE}). However, since the coefficients $b,\sigma$ of the SDDE (\ref{controlled SDDE}) enter into the delayed It\^{o}'s formula, and the generator $f$ of the BSDDE (\ref{controlled BSDDE}) depends on $x_2$, the coefficients of the BSEE (\ref{BSEE}) also depend on $x_2$. Consequently, we cannot apriori expect (\ref{BSEE}) to have solutions independent of $x_2$.

In the sequel, we will clarify that under some conditions on the coefficients $b,\sigma,f$, (\ref{BSEE}) have a solution depending only on $(s,x,x_1)$. In other words, we seek conditions ensuring that a solution to (\ref{BSEE}) will be independent of $x_2$, thus the generalized HJB equation in finite dimension (\ref{generalized HJB equation}) is ``effective". The following theorem is our main result.

\begin{theorem}
The generalized HJB equation in finite dimension (\ref{generalized HJB equation}) admits a smooth function $V=V(s,x,x_1)$ which is independent of $x_2$, if \begin{equation}\label{assumption of b,sigma,f}
\begin{aligned}
&b(t,x,x_1,x_2,u)=b_1(t,x,x_1,u)+b_2(t,x,x_1,u)x_2,\\
&\sigma(t,x,x_1,x_2,u)=\sigma(t,x,x_1,u),\\
&f(t,x,x_1,x_2,y,z,u)=f_1(t,x,x_1,y,z,u)+f_2(t,x,x_1,y,z,u)x_2,
\end{aligned}
\end{equation}
and the following system of first order PDEs
\begin{equation}\label{system of first order PDEs}
\left\{
\begin{aligned}
&\frac{\partial\hat{b}}{\partial x_1}(s,x,x_1,u)+e^{\lambda\delta}\bigg[f_2(s,x,x_1,y,z,u)
-b_2(s,x,x_1,u)\frac{\partial\hat{b}}{\partial x}(s,x,x_1,u)\bigg]=0,\\
&\frac{\partial\sigma}{\partial x_1}(s,x,x_1,u)+e^{\lambda\delta}\bigg[f_2(s,x,x_1,y,z,u)
-b_2(s,x,x_1,u)\frac{\partial\sigma}{\partial x}(s,x,x_1,u)\bigg]=0,\\
&\frac{\partial f_1}{\partial x_1}\Big(s,x,x_1,y,z,u\Big)+e^{\lambda\delta}\bigg[f_2(s,x,x_1,y,z,u)\\
&\hspace{4cm}-b_2(s,x,x_1,u)\frac{\partial f_1}{\partial x}(s,x,x_1,y,z,u)\bigg]=0,\\
&\frac{\partial\phi}{\partial x_1}(x,x_1) +e^{\lambda\delta}\bigg[f_2(s,x,x_1,y,z,u)
-b_2(s,x,x_1,u)\frac{\partial\phi}{\partial x}(x,x_1)\bigg]=0,
\end{aligned}
\right.
\end{equation}
holds for all $(s,x,x_1,y,z)$, where
\begin{equation*}
\hat{b}(s,x,x_1,u)):=b_1(s,x,x_1,u)+e^{\lambda\delta}(x-\lambda x_1)b_2(s,x,x_1,u).
\end{equation*}
\end{theorem}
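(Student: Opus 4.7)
The plan is to substitute the ansatz $V = V(s,x,x_1)$ (independent of $x_2$) into the generalized HJB equation (\ref{generalized HJB equation}) and exploit the affine-in-$x_2$ structure (\ref{assumption of b,sigma,f}) of the coefficients. Under these structural assumptions the Hamiltonian (\ref{generalized Hamiltonian function}) splits as $G = G_0 + x_2\,G_1$, where
\begin{equation*}
G_0 = -b_1 V_x - \tfrac{1}{2}\sigma^2 V_{xx} - (x-\lambda x_1)V_{x_1} + f_1,\qquad G_1 = -b_2 V_x + e^{-\lambda\delta}V_{x_1} + f_2,
\end{equation*}
with $f_1,f_2$ evaluated at $(s,x,x_1,-V,-\sigma V_x,u)$. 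Since we want the HJB to hold for every $x_2\in\mathbb{R}$ while $V$ is $x_2$-independent, the $x_2$-coefficient inside $\sup_u$ must vanish; the natural sufficient condition is the pointwise-in-$u$ compatibility relation
\begin{equation*}
V_{x_1} = e^{\lambda\delta}\bigl(b_2 V_x - f_2\bigr).
\end{equation*}

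Substituting this relation into $G_0$ collapses the HJB to the reduced nonlinear parabolic PDE
\begin{equation*}
-V_s + \sup_{u\in\mathbb{U}}\Bigl\{-\hat b\,V_x - \tfrac{1}{2}\sigma^2 V_{xx} + \hat f\Bigr\} = 0,\qquad V(T,x,x_1) = -\phi(x,x_1),
\end{equation*}
where $\hat b$ is as in the theorem statement and $\hat f := f_1 + e^{\lambda\delta}(x-\lambda x_1)f_2$. This is a standard second-order Bellman-type equation in the two spatial variables $(x,x_1)$; under the continuity and Lipschitz hypotheses (H1)--(H6) together with mild additional regularity on the coefficients, it admits a smooth solution $V(s,x,x_1)$ by classical parabolic PDE theory.

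The main obstacle --- and where the PDE system (\ref{system of first order PDEs}) enters crucially --- is verifying that the $V$ produced by the reduced equation actually satisfies the compatibility relation above, so that the full HJB (not just the $x_2$-independent piece) genuinely holds. Each of the four lines of (\ref{system of first order PDEs}) can be rewritten uniformly as
\begin{equation*}
g_{x_1} - e^{\lambda\delta}b_2\,g_x = -e^{\lambda\delta}f_2,\qquad g \in \{\hat b,\sigma,f_1,\phi\},
\end{equation*}
expressing that every coefficient of the reduced HJB together with the terminal datum $-\phi$ transports in the same way under the first-order operator $\partial_{x_1} - e^{\lambda\delta}b_2\partial_x$. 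I would then differentiate the reduced HJB in $x_1$, subtract $e^{\lambda\delta}b_2$ times its $x$-derivative, and substitute each line of (\ref{system of first order PDEs}) term by term; after invoking the envelope identity at the optimizing $u^*$, the discrepancy $W := V_{x_1} - e^{\lambda\delta}(b_2 V_x - f_2)$ is shown to satisfy a homogeneous linear parabolic equation in $(s,x,x_1)$ with vanishing terminal data (the latter obtained from differentiating $V(T,\cdot) = -\phi$ and applying the fourth line of the system). Uniqueness of this backward linear problem forces $W \equiv 0$, securing the compatibility relation for all admissible $u$ and thereby producing the required $x_2$-independent smooth solution of (\ref{generalized HJB equation}).
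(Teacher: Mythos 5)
Your proposal follows essentially the same route as the paper's proof: substitute the ansatz $V=V(s,x,x_1)$, use the affine-in-$x_2$ structure of $b$ and $f$ (and the $x_2$-independence of $\sigma$) to isolate the $x_2$-linear part of the Hamiltonian, force that part to vanish --- which is exactly the paper's relation (\ref{differentiating HJB equation with respect to x2-2}), obtained there by differentiating the HJB equation in $x_2$ --- and then substitute back to obtain a reduced second-order HJB equation in $(s,x,x_1)$. The only genuine divergence is in the last step, where the system (\ref{system of first order PDEs}) enters. The paper introduces the formal operator $\partial/\partial\tilde{x}_1=\partial/\partial x_1+e^{\lambda\delta}\big[f_2-b_2\,\partial/\partial x\big]$, observes that the compatibility relation reads $\partial V/\partial\tilde{x}_1=0$, and asserts that the coefficients $\hat{b},\sigma,f_1,\phi$ of the reduced equation must therefore be ``constant in $\tilde{x}_1$'', which is precisely (\ref{system of first order PDEs}). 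Your discrepancy-function argument (show $W:=V_{x_1}-e^{\lambda\delta}(b_2V_x-f_2)$ solves a homogeneous linear parabolic equation with vanishing terminal data, hence $W\equiv0$ by uniqueness) is a more rigorous rendering of the same idea and is the natural way to turn the paper's heuristic into an actual sufficiency proof; to complete it you would still need to carry out the differentiation and verify that every inhomogeneous term is killed by a line of (\ref{system of first order PDEs}), and to address the dependence of the compatibility relation on $u$ (the paper evaluates everything at $u^*$ and then states the system for generic $u$, with the same looseness you exhibit). One small point in your favour: your reduced equation correctly carries $\hat{f}=f_1+e^{\lambda\delta}(x-\lambda x_1)f_2$, whereas the paper's displayed reduced equation (\ref{generalized HJB equation-finite dimensional-final}) retains only $f_1$; your version is the correct algebra, the difference being invisible in the paper's Section 4 example where $f_2=0$.
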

\begin{proof}
We first know that if $V=V(s,x,x_1)$, then from (\ref{generalized HJB equation}), (\ref{generalized Hamiltonian function}), $V$ satisfies the following generalized HJB equation
\begin{equation}\label{generalized HJB equation-finite dimensional}
\begin{aligned}
&-\frac{\partial V}{\partial s}(s,x,x_1)-\big(x-\lambda x_1-e^{-\lambda\delta}x_2\big)\frac{\partial V}{\partial x_1}(s,x,x_1)\\
&-b(s,x,x_1,x_2,u^*)\frac{\partial V}{\partial x}(s,x,x_1)-\frac{1}{2}\sigma^2(s,x,x_1,x_2,u^*)\frac{\partial^2V}{\partial x^2}(s,x,x_1)\\
&+f\Big(s,x,x_1,x_2,-V(s,x,x_1),-\sigma(s,x,x_1,x_2,u^*)\frac{\partial V}{\partial x}(s,x,x_1),u^*\Big)=0,\ \forall x_2\in\mathbb{R},
\end{aligned}
\end{equation}
with terminal condition $V(T,x,x_1)=-\phi(x,x_1)$. Differentiating (\ref{generalized HJB equation-finite dimensional}) with respect to $x_2$, we obtain
\begin{equation}\label{differentiating HJB equation with respect to x2}
\begin{aligned}
&\ \frac{\partial V}{\partial x_1}(s,x,x_1)=-e^{-\lambda\delta}\bigg\{-\frac{\partial b}{\partial x_2}(s,x,x_1,x_2,u^*)\frac{\partial V}{\partial x}(s,x,x_1)\\
&\quad-\frac{1}{2}\sigma(s,x,x_1,x_2,u^*)\frac{\partial\sigma}{\partial x_2}(s,x,x_1,x_2,u^*)\frac{\partial^2V}{\partial x^2}(s,x,x_1)\\
&\quad+\frac{\partial f}{\partial x_2}\Big(s,x,x_1,x_2,-V(s,x,x_1),-\sigma(s,x,x_1,x_2,u^*)\frac{\partial V}{\partial x}(s,x,x_1),u^*\Big)\\
&\quad-\frac{\partial f}{\partial z}\Big(s,x,x_1,x_2,-V(s,x,x_1),-\sigma(s,x,x_1,x_2,u^*)\frac{\partial V}{\partial x}(s,x,x_1),u^*\Big)\\
&\qquad\times\frac{\partial\sigma}{\partial x_2}(s,x,x_1,x_2,u^*)\frac{\partial V}{\partial x}(s,x,x_1)\bigg\}.
\end{aligned}
\end{equation}
Inserting this into (\ref{generalized HJB equation-finite dimensional}), it takes the form
\begin{equation}\label{generalized HJB equation-finite dimensional-2}
\begin{aligned}
&-\frac{\partial V}{\partial s}(s,x,x_1)+\big(x-\lambda x_1-e^{\lambda\delta}x_2\big)e^{\lambda\delta}\bigg\{-\frac{\partial b}{\partial x_2}(s,x,x_1,x_2,u^*)\frac{\partial V}{\partial x}(s,x,x_1)\\
&\quad-\frac{1}{2}\sigma(s,x,x_1,x_2,u^*)\frac{\partial\sigma}{\partial x_2}(s,x,x_1,x_2,u^*)\frac{\partial^2V}{\partial x^2}(s,x,x_1)\\
&\quad+\frac{\partial f}{\partial x_2}\Big(s,x,x_1,x_2,-V(s,x,x_1),-\sigma(s,x,x_1,x_2,u^*)\frac{\partial V}{\partial x}(s,x,x_1),u^*\Big)\\
&\quad-\frac{\partial f}{\partial z}\Big(s,x,x_1,x_2,-V(s,x,x_1),-\sigma(s,x,x_1,x_2,u^*)\frac{\partial V}{\partial x}(s,x,x_1),u^*\Big)\\
&\qquad\times\frac{\partial\sigma}{\partial x_2}(s,x,x_1,x_2,u^*)\frac{\partial V}{\partial x}(s,x,x_1)\bigg\}\\
&-b(s,x,x_1,x_2,u^*)\frac{\partial V}{\partial x}(s,x,x_1)-\frac{1}{2}\sigma^2(s,x,x_1,x_2,u^*)\frac{\partial^2V}{\partial x^2}(s,x,x_1)\\
&+f\Big(s,x,x_1,x_2,-V(s,x,x_1),-\sigma(s,x,x_1,x_2,u^*)\frac{\partial V}{\partial x}(s,x,x_1),u^*\Big)=0,\ \forall x_2\in\mathbb{R}.
\end{aligned}
\end{equation}

Suppose that $f$ takes the form in (\ref{assumption of b,sigma,f}), then (\ref{generalized HJB equation-finite dimensional-2}) reduces to
\begin{equation}\label{generalized HJB equation-finite dimensional-3}
\begin{aligned}
&-\frac{\partial V}{\partial s}(s,x,x_1)+\big(x-\lambda x_1-e^{-\lambda\delta}x_2\big)e^{\lambda\delta}\bigg\{-\frac{\partial b}{\partial x_2}(s,x,x_1,x_2,u^*)\frac{\partial V}{\partial x}(s,x,x_1)\\
&\quad-\frac{1}{2}\sigma(s,x,x_1,x_2,u^*)\frac{\partial\sigma}{\partial x_2}(s,x,x_1,x_2,u^*)\frac{\partial^2V}{\partial x^2}(s,x,x_1)\\
&\quad-\bigg[\frac{\partial f_1}{\partial z}\Big(s,x,x_1,-V(s,x,x_1),-\sigma(s,x,x_1,x_2,u^*)\frac{\partial V}{\partial x}(s,x,x_1),u^*\Big)\\
&\qquad+\frac{\partial f_2}{\partial z}\Big(s,x,x_1,-V(s,x,x_1),-\sigma(s,x,x_1,x_2,u^*)\frac{\partial V}{\partial x}(s,x,x_1),u^*\Big)x_2\bigg]\\
&\quad\times\frac{\partial\sigma}{\partial x_2}(s,x,x_1,x_2,u^*)\frac{\partial V}{\partial x}(s,x,x_1)\bigg\}\\
&-b(s,x,x_1,x_2,u^*)\frac{\partial V}{\partial x}(s,x,x_1)-\frac{1}{2}\sigma^2(s,x,x_1,x_2,u^*)\frac{\partial^2V}{\partial x^2}(s,x,x_1)\\
&+f_1\Big(s,x,x_1,-V(s,x,x_1),-\sigma(s,x,x_1,x_2,u^*)\frac{\partial V}{\partial x}(s,x,x_1),u^*\Big)=0,\ \forall x_2\in\mathbb{R}.
\end{aligned}
\end{equation}
Next, suppose that $\sigma$ takes the form in (\ref{assumption of b,sigma,f}), then (\ref{generalized HJB equation-finite dimensional-3}) reduces to
\begin{equation}\label{generalized HJB equation-finite dimensional-4}
\begin{aligned}
&-\frac{\partial V}{\partial s}(s,x,x_1)+\big(x-\lambda x_1-e^{\lambda\delta}x_2\big)e^{\lambda\delta}\bigg\{-\frac{\partial b}{\partial x_2}(s,x,x_1,x_2,u^*)\frac{\partial V}{\partial x}(s,x,x_1)\\
&-b(s,x,x_1,x_2,u^*)\frac{\partial V}{\partial x}(s,x,x_1)-\frac{1}{2}\sigma^2(s,x,x_1,u^*)\frac{\partial^2V}{\partial x^2}(s,x,x_1)\\
&+f_1\Big(s,x,x_1,-V(s,x,x_1),-\sigma(s,x,x_1,u^*)\frac{\partial V}{\partial x}(s,x,x_1),u^*\Big)=0,\ \forall x_2\in\mathbb{R}.
\end{aligned}
\end{equation}
Finally, suppose that $b$ takes the form in (\ref{assumption of b,sigma,f}), then (\ref{generalized HJB equation-finite dimensional-4}) with the terminal condition reduces to
\begin{equation}\label{generalized HJB equation-finite dimensional-final}
\left\{
\begin{aligned}
&-\frac{\partial V}{\partial s}(s,x,x_1)-\Big[b_1(s,x,x_1,u^*)+e^{\lambda\delta}(x-\lambda x_1)b_2(s,x,x_1,u^*)\Big]\frac{\partial V}{\partial x}(s,x,x_1)\\
&\ -\frac{1}{2}\sigma^2(s,x,x_1,u^*)\frac{\partial^2V}{\partial x^2}(s,x,x_1)\\
&\ +f_1\Big(s,x,x_1,-V(s,x,x_1),-\sigma(s,x,x_1,u^*)\frac{\partial V}{\partial x}(s,x,x_1),u^*\Big)=0,\\
&V(T,x,x_1)=-\phi(x,x_1),
\end{aligned}
\right.
\end{equation}
which is independent of $x_2$. Note that now (\ref{differentiating HJB equation with respect to x2}) takes the form
\begin{equation}\label{differentiating HJB equation with respect to x2-2}
\begin{aligned}
&\ \frac{\partial V}{\partial x_1}(s,x,x_1)
 +e^{\lambda\delta}\bigg[f_2\Big(s,x,x_1,-V(s,x,x_1),-\sigma(s,x,x_1,u^*)\frac{\partial V}{\partial x}(s,x,x_1),u^*\Big)\\
&\qquad\qquad\qquad\qquad\ -b_2(s,x,x_1,u^*)\frac{\partial V}{\partial x}(s,x,x_1)\bigg]=0.
\end{aligned}
\end{equation}
We introduce new variables $\tilde{x}$ and $\tilde{x}_1$, such that
\begin{equation*}
\left\{
\begin{aligned}
\frac{\partial}{\partial\tilde{x}_1}&:=\frac{\partial}{\partial x_1}
 +e^{\lambda\delta}\bigg[f_2\Big(s,x,x_1,-V(s,x,x_1),-\sigma(s,x,x_1,u^*)\frac{\partial V}{\partial x}(s,x,x_1),u^*\Big)\\
&\qquad-b_2(s,x,x_1,u^*)\frac{\partial}{\partial x}\bigg],\qquad\frac{\partial}{\partial\tilde{x}}:=\frac{\partial}{\partial x}.
\end{aligned}
\right.
\end{equation*}
Then (\ref{differentiating HJB equation with respect to x2-2}) states that
\begin{equation*}
\frac{\partial V}{\partial\tilde{x}_1}(s,\tilde{x},\tilde{x}_1)=0,\mbox{ for all }(s,\tilde{x},\tilde{x}_1).
\end{equation*}
In order to be compatible with this, the coefficients of (\ref{generalized HJB equation-finite dimensional-final}) and the functions $f_1,\phi$ must also be constants in $\tilde{x}_1$, that is
\begin{equation}\label{system of first order PDEs-initial form}
\left\{
\begin{aligned}
&\frac{\partial\hat{b}}{\partial x_1}(s,x,x_1,u^*)
 +e^{\lambda\delta}\bigg[f_2\Big(s,x,x_1,-V(s,x,x_1),-\sigma(s,x,x_1,u^*)\frac{\partial V}{\partial x}(s,x,x_1),u^*\Big)\\
&\quad-b_2(s,x,x_1,u^*)\frac{\partial\hat{b}}{\partial x}(s,x,x_1,u^*)\bigg]=0,\\
&\frac{\partial\sigma}{\partial x_1}(s,x,x_1,u^*)
 +e^{\lambda\delta}\bigg[f_2\Big(s,x,x_1,-V(s,x,x_1),-\sigma(s,x,x_1,u^*)\frac{\partial V}{\partial x}(s,x,x_1),u^*\Big)\\
&\quad-b_2(s,x,x_1,u^*)\frac{\partial\sigma}{\partial x}(s,x,x_1,u^*)\bigg]=0,\\
&\frac{\partial f_1}{\partial x_1}\Big(s,x,x_1,-V(s,x,x_1),-\sigma(s,x,x_1,u^*)\frac{\partial V}{\partial x}(s,x,x_1),u^*\Big)\\
&+e^{\lambda\delta}\bigg[f_2\Big(s,x,x_1,-V(s,x,x_1),-\sigma(s,x,x_1,u^*)\frac{\partial V}{\partial x}(s,x,x_1),u^*\Big)\\
&\qquad\quad-b_2(s,x,x_1,u^*)\frac{\partial f_1}{\partial x}\Big(s,x,x_1,-V(s,x,x_1),-\sigma(s,x,x_1,u^*)\\
&\qquad\qquad\qquad\qquad\qquad\qquad\times\frac{\partial V}{\partial x}(s,x,x_1),u^*\Big)\bigg]=0,\\
&\frac{\partial\phi}{\partial x_1}(x,x_1)
 +e^{\lambda\delta}\bigg[f_2\Big(s,x,x_1,-V(s,x,x_1),-\sigma(s,x,x_1,u^*)\frac{\partial V}{\partial x}(s,x,x_1),u^*\Big)\\
&\qquad\qquad\qquad\quad-b_2(s,x,x_1,u^*)\frac{\partial\phi}{\partial x}(x,x_1)\bigg]=0,\quad\forall (s,x,x_1),
\end{aligned}
\right.
\end{equation}
where
\begin{equation*}
\hat{b}(s,x,x_1,u)):=b_1(s,x,x_1,u)+e^{\lambda\delta}(x-\lambda x_1)b_2(s,x,x_1,u).
\end{equation*}
Using the initial variables
\begin{equation*}
y\equiv y(s,x,x_1)=-V(s,x,x_1),\ z\equiv z(s,x,x_1)=-\sigma(s,x,x_1,u^*)\frac{\partial V}{\partial x}(s,x,x_1)
\end{equation*}
in (\ref{system of first order PDEs-initial form}), we end with (\ref{system of first order PDEs}). The proof is complete.
\end{proof}

\begin{remark}
If the condition (\ref{assumption of b,sigma,f}) holds, then for given $(s,\varphi)\in[0,T)\times C([-\delta,0];\mathbb{R})$ and control $u(\cdot)\in\mathcal{U}[s,T]$, controlled FBSDDE (\ref{controlled FBSDDE}) takes the following special form
\begin{equation}\label{controlled FBSDDE-finite dimensional-1}
\left\{
\begin{aligned}
 dX^{s,\varphi;u}(t)&=\Big[b_1(t,X^{s,\varphi;u}(t),X_1^{s,\varphi;u}(t),u(t))\\
                    &\quad+b_2(t,X^{s,\varphi;u}(t),X_1^{s,\varphi;u}(t),u(t))X_2^{s,\varphi;u}(t)\Big]dt\\
                    &\quad+\sigma(t,X^{s,\varphi;u}(t),X_1^{s,\varphi;u}(t),u(t))dW(t),\\
-dY^{s,\varphi;u}(t)&=\Big[f_1\big(t,X^{t,\varphi;u}(t),X_1^{t,\varphi;u}(t),Y^{s,\varphi;u}(t),Z^{s,\varphi;u}(t),u(t)\big)\\
                    &\quad\ +f_2\big(t,X^{t,\varphi;u}(t),X_1^{t,\varphi;u}(t),Y^{s,\varphi;u}(t),Z^{s,\varphi;u}(t),u(t)\big)X_2^{s,\varphi;u}(t)\Big]dt\\
                      &\quad-Z^{t,\varphi;u}(t)dW(t),\quad t\in[s,T],\\
  X^{s,\varphi;u}(t)&=\varphi(t-s),\quad t\in[s-\delta,s],\\
  Y^{s,\varphi;u}(T)&=\phi(X^{s,\varphi;u}(T),X_1^{s,\varphi;u}(T)).
\end{aligned}
\right.
\end{equation}
That is to say, condition (\ref{assumption of b,sigma,f}) together with the PDEs system (\ref{system of first order PDEs}) guarantees that the reduction of PDE (\ref{BSEE}) from an infinite dimensional one to its finite dimensional counterpart (\ref{generalized HJB equation}). Though the results obtained in Theorems 2.6 and 2.7 corresponding to (\ref{controlled FBSDDE-finite dimensional-1}) are less general than (\ref{controlled FBSDDE}), they never the less cover many interesting applications. In Section 4, we will present one financial example that satisfy the conditions (\ref{assumption of b,sigma,f}), (\ref{system of first order PDEs}) for its dynamics of the state being the form of (\ref{controlled FBSDDE-finite dimensional-1}). Some discussions are also given to indicate why it is difficult to find more general examples.
\end{remark}

\section{Relationship with Maximum Principle}

For stochastic optimal control problems with time delay and those of FBSDEs (recursive utility, without time delay), the relationships between dynamic programming principle and maximum principle are shown in \cite{Shi11} and \cite{ShiYu13}, respectively. In this section, a similar relationship is given between the value function $V$, the generalized Hamiltonian function $G$, and the adjoint processes $\vec{p},\vec{q}$ (see Theorem 3.1), under the assumption that the value function is smooth enough and depends on the initial path of the state in a simple way as in Theorem 2.6. The main result is shown in Theorem 3.2, which could cover many interesting applications.

For this target, we first solve {\bf Problem (FBSOCPD)} by the Pontryagin's maximum principle approach. In this part, let the initial time $s=0$ and write $X=X^u=X^{0,\varphi;u}$, etc. Moreover, we need the following additional assumptions.

\noindent(H7)\quad The functions $b(t,x,x_1,x_2,u),\sigma(t,x,x_1,x_2,u)$
are continuously differentiable in $(x,x_1,x_2,u)$, such that
\begin{equation*}
\mathbb{E}\int_0^T\Big(\Big|\frac{\partial b}{\partial \lambda}(t,X(t),X_1(t),X_2(t),u(t))\Big|^2
+\Big|\frac{\partial\sigma}{\partial\lambda}(t,X(t),X_1(t),X_2(t),u(t))\Big|^2\Big)dt<\infty,
\end{equation*}
for $\lambda=x,x_1,x_2,u$.

\noindent(H8)\quad The function $f(t,x,x_1,x_2,y,z,u)$ is continuously differentiable in $(x,x_1,x_2,\\y,z,u)$ and $\phi(x,x_1)$ is continuously differentiable in $(x,x_1)$, such that
\begin{equation*}
\begin{aligned}
&\mathbb{E}\bigg\{\int_0^T\Big|\frac{\partial f}{\partial\lambda}(t,X(t),X_1(t),X_2(t),Y(t),Z(t),u(t))\Big|^2dt\\
&\quad+\Big|\frac{\partial\phi}{\partial x}(X(T),X_1(T))\Big|^2+\Big|\frac{\partial\phi}{\partial x_1}(X(T),X_1(T))\Big|^2\bigg\}<\infty,
\end{aligned}
\end{equation*}
for $\lambda=x,x_1,x_2,y,z,u$.

\vspace{1mm}

We introduce the Hamiltonian function
$H:[0,T]\times\mathbb{R}^5\times\mathbb{U}\times\mathbb{R}^3\times\mathbb{R}\times\mathbb{R}^2\rightarrow\mathbb{R}$
as
\begin{equation}\label{Hamiltonian function}
\begin{aligned}
&H(t,x,x_1,x_2,y,z,u,\vec{p},q,\vec{k}):=p_1b(t,x,x_1,x_2,u)\\
&\quad+p_2\big(x-\lambda
x_1-e^{-\lambda\delta}x_2\big)+k_1\sigma(t,x,x_1,x_2,u)-qf(t,x,x_1,x_2,y,z,u),
\end{aligned}
\end{equation}
where $\vec{p}=(p_1,p_2,p_3)^\top,\vec{k}=(k_1,k_2)^\top$. For each $u^*(\cdot)\in\mathcal{U}[0,T]$ and the
corresponding solutions $(X^*(\cdot),Y^*(\cdot),Z^*(\cdot))$ to
(\ref{controlled FBSDDE}) with (\ref{definition of delay terms}), we also introduce the following adjoint equation
\begin{equation}\label{adjoint equation}
\left\{
\begin{aligned}
-dp_1(t)&=\frac{\partial H^*}{\partial x}(t)dt-k_1(t)dW(t),\\
-dp_2(t)&=\frac{\partial H^*}{\partial x_1}(t)dt-k_2(t)dW(t),\quad-dp_3(t)=\frac{\partial H^*}{\partial x_2}(t)dt,\\
   dq(t)&=-\frac{\partial H^*}{\partial y}(t)dt-\frac{\partial H^*}{\partial z}(t)dW(t),\quad t\in[0,T],\\
    q(0)&=1,\ p_1(T)=-\frac{\partial\phi}{\partial x}(X^*(T),X_1^*(T))q(T),\\\
  p_2(T)&=-\frac{\partial\phi}{\partial x_1}(X^*(T),X_1^*(T))q(T),\ p_3(T)=0,
\end{aligned}
\right.
\end{equation}
where for notation simplicity, we use
\begin{equation*}
   H^*(t)=H\big(t,X^*(t),X_1^*(t),X_2^*(t),Y^*(t),Z^*(t),u^*(t),\vec{p}(t),q(t),\vec{k}(t)\big),
\end{equation*}
and all its partial derivatives. From the definition (\ref{Hamiltonian function}), under assumptions (H7), (H8), controlled FBSDDE (\ref{adjoint equation}) admits a unique $\mathcal{F}_t^0$-adapted solution $(\vec{p}(\cdot)\equiv(p_1(\cdot),\\p_2(\cdot),p_3(\cdot))^\top,q(\cdot),\vec{k}(\cdot)\equiv(k_1(\cdot),k_2(\cdot))^\top)$. (\ref{adjoint equation}) is called a stochastic Hamiltonian system. Note that the adjoint equation (\ref{adjoint equation}) introduced here is similar to those in \cite{OS00} and Section 3.3 in \cite{AHOP13}. For practicable importance, it is easier to be solved explicitly than the time advanced ones in \cite{ChenWu10} and \cite{OSZ11}.

The following is the sufficient maximum principle as a verification result. Though its proof is similar to those in \cite{OS00}, we give the detail for completeness.

\begin{theorem}
{\bf (Sufficient Maximum Principle)} Let $u^*(\cdot)\in\mathcal{U}[0,T]$, $(X^*(\cdot),\\Y^*(\cdot),Z^*(\cdot))$ be the corresponding solution to
(\ref{controlled FBSDDE}) with (\ref{definition of delay terms}), and
$\vec{p}(\cdot)=(p_1(\cdot),p_2(\cdot),\\p_3(\cdot))^\top,q(\cdot),\vec{k}(\cdot)=(k_1(\cdot),k_2(\cdot))^\top$ be the corresponding solutions to
(\ref{adjoint equation}). Suppose that
\begin{equation}\label{concvexity assumption}
\begin{aligned}
(x,x_1,x_2,y,z,u)\rightarrow H(t,x,x_1,x_2,y,z,u,\vec{p}(t),q(t),\vec{k}(t))\mbox{ is convex},\ \forall t\in[0,T],
\end{aligned}
\end{equation}
\begin{equation}\label{phi's special case}
\phi(x,x_1)=Mx+Nx_1,\ \mbox{ for }M,N\in\mathbb{R},
\end{equation}
\begin{equation}\label{p3=0}
p_3(t)=0,\mbox{ for all }t\in[0,T],
\end{equation}
and
\begin{equation}\label{maximum condition}
\begin{aligned}
&\frac{\partial H^*}{\partial u}(t)\big(u^*(t)-u\big)\leq0,\quad\forall u\in\mathbb{U}, a.s.,\mbox{ for all }t\in[0,T].
\end{aligned}
\end{equation}
Then $u^*(\cdot)$ is an optimal control for {\bf Problem (FBSOCPD)}.
\end{theorem}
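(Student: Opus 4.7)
The plan is to prove $J(0,\varphi;u(\cdot))\ge J(0,\varphi;u^*(\cdot))$ for every admissible $u(\cdot)$, which by (\ref{cost functional for FBSDDE}) reduces to $\Delta Y(0)\ge 0$, where $\Delta Y:=Y^*-Y$ and similarly $\Delta X,\Delta X_1,\Delta X_2,\Delta Z$. Because both trajectories start from the same initial path $\varphi$, we have $\Delta X(0)=\Delta X_1(0)=0$. The argument is the usual duality calculation adapted to the delayed FBSDE: apply It\^o's formula to the three products $q(t)\Delta Y(t)$, $p_1(t)\Delta X(t)$, $p_2(t)\Delta X_1(t)$, take expectations, then use (\ref{adjoint equation}) together with the linear terminal profile (\ref{phi's special case}) to collapse all boundary contributions into a single integral involving the Hamiltonian defect $H^*-H$.

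Concretely, using $q(0)=1$ together with $dq=-H^*_y\,dt-H^*_z\,dW$ and $d\Delta Y=-(f^*-f)\,dt+\Delta Z\,dW$, the product rule yields
\begin{equation*}
\Delta Y(0)=\mathbb{E}\bigl[q(T)\Delta Y(T)\bigr]+\mathbb{E}\!\int_0^T\!\bigl\{H^*_y\Delta Y+H^*_z\Delta Z+q(f^*-f)\bigr\}dt.
\end{equation*}
By (\ref{phi's special case}) and $p_1(T)=-Mq(T),\,p_2(T)=-Nq(T)$, one has $\mathbb{E}[q(T)\Delta Y(T)]=-\mathbb{E}[p_1(T)\Delta X(T)+p_2(T)\Delta X_1(T)]$. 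I then expand these two pairings by It\^o (noting that $\Delta X_1$ is absolutely continuous with $\tfrac{d}{dt}\Delta X_1=\Delta X-\lambda\Delta X_1-e^{-\lambda\delta}\Delta X_2$ as in (\ref{Ito's formula}), so $p_2\Delta X_1$ has no quadratic-variation correction). The cross-terms $p_1(b^*-b)+k_1(\sigma^*-\sigma)+p_2(\Delta X-\lambda\Delta X_1-e^{-\lambda\delta}\Delta X_2)$ combine with $q(f^*-f)$ to reconstruct $H^*-H$ by definition (\ref{Hamiltonian function}), giving
\begin{equation*}
\Delta Y(0)=\mathbb{E}\!\int_0^T\!\bigl\{H^*_x\Delta X+H^*_{x_1}\Delta X_1+H^*_y\Delta Y+H^*_z\Delta Z-(H^*-H)\bigr\}dt.
\end{equation*}

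Convexity (\ref{concvexity assumption}) supplies the subgradient bound $H^*-H\le H^*_x\Delta X+H^*_{x_1}\Delta X_1+H^*_{x_2}\Delta X_2+H^*_y\Delta Y+H^*_z\Delta Z+H^*_u(u^*-u)$. Substituting, the $x,x_1,y,z$ gradient contributions cancel and what remains is
\begin{equation*}
\Delta Y(0)\ge-\mathbb{E}\!\int_0^T\!\bigl\{H^*_{x_2}\Delta X_2+H^*_u(u^*-u)\bigr\}dt.
\end{equation*}
The maximum condition (\ref{maximum condition}) gives $-H^*_u(u^*-u)\ge 0$ pointwise, while (\ref{p3=0}) combined with the adjoint dynamics $-dp_3=H^*_{x_2}dt$ and $p_3(T)=0$ forces $H^*_{x_2}(t)\equiv 0$, so the delayed term vanishes. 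Hence $\Delta Y(0)\ge 0$.

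The main obstacle I anticipate is the bookkeeping of combining the three It\^o expansions so that every $(b^*-b)$, $(\sigma^*-\sigma)$, $(f^*-f)$ piece collapses cleanly into the single quantity $H^*-H$ without residue. The linearity of $\phi$ in (\ref{phi's special case}) is exactly what makes the terminal pairing $\mathbb{E}[q(T)\Delta Y(T)]=-\mathbb{E}[p_1(T)\Delta X(T)+p_2(T)\Delta X_1(T)]$ hold as an equality; a nonlinear $\phi$ would produce an extra boundary defect that would have to be absorbed by a convexity assumption on $\phi$ itself. The hypothesis (\ref{p3=0}) plays the analogous role internally: since $\Delta X_2(t)=\Delta X(t-\delta)$ cannot be naturally paired with a standard It\^o calculation on $[0,T]$, the $H^*_{x_2}\Delta X_2$ leftover from convexity is not handled dynamically and must be eliminated by an external condition, which is exactly what $p_3\equiv 0$ guarantees.
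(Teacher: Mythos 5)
Your proposal is correct and follows essentially the same duality-plus-convexity argument as the paper: It\^{o} expansion of the pairings $q\Delta Y$, $p_1\Delta X$, $p_2\Delta X_1$, reconstruction of $H^*-H$ from the cross terms, the subgradient inequality from (\ref{concvexity assumption}), and the maximum condition (\ref{maximum condition}) to close. The only (cosmetic) difference is that the paper also includes $p_3(t)\Delta X_2(t)$ in the It\^{o} sum and lets (\ref{p3=0}) annihilate that product, whereas you omit it and instead observe that $p_3\equiv0$ together with $-dp_3=\frac{\partial H^*}{\partial x_2}dt$ forces $\frac{\partial H^*}{\partial x_2}\equiv0$, which kills the leftover $\frac{\partial H^*}{\partial x_2}\Delta X_2$ term from convexity --- an equivalent and, if anything, more transparent way of explaining why hypothesis (\ref{p3=0}) is needed.
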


\begin{proof}
For any $u(\cdot)\in\mathcal{U}[0,T]$, with the corresponding solution $(X(\cdot),Y(\cdot),Z(\cdot))$ to
(\ref{controlled FBSDDE}) with (\ref{definition of delay terms}), applying delayed It\^{o}'s formula (\ref{Ito's formula}) to $q(t)(Y^*(t)-Y(t))+p_1(t)(X^*(t)-X(t))+p_2(t)\int_{-\delta}^0e^{\lambda\tau}(X^*(t+\tau)-X(t+\tau)d\tau+p_3(t)(X^*(t-\delta)-X(t-\delta))$, noting that
\begin{equation}\label{applying Ito's formula to X-1}
d\int_{-\delta}^0e^{\lambda\tau}X(t+\tau)d\tau=\Big[X(t)-e^{-\lambda\delta}X(t-\delta)-\lambda\int_{-\delta}^0e^{\lambda\tau}X(t+\tau)d\tau\Big]dt,
\end{equation}
we have
\begin{equation*}
\begin{aligned}
 &\mathbb{E}\big[q(T)(Y^*(T)-Y(T))\big]-\big[Y^*(0)-Y(0)\big]\\
 &\ -\mathbb{E}\big[Mq(T)(X^*(T)-X(T))\big]-\mathbb{E}\big[Nq(T)(X_1^*(T)-X_1(T))\big]\\
=&\ -\big[Y^*(0)-Y(0)\big]=J(0,\varphi;u^*(\cdot))-J(0,\varphi;u(\cdot))\\
=&\ \mathbb{E}\int_0^T\bigg\{p_1(t)\bigg(\frac{\partial b^*}{\partial u}(t)-\frac{\partial b}{\partial u}(t,X(t),X_1(t),X_2(t),u(t))\bigg)\\
 &\qquad\quad+k_1(t)\bigg(\frac{\partial\sigma^*}{\partial u}(t)-\frac{\partial\sigma}{\partial u}(t,X(t),X_1(t),X_2(t),u(t))\bigg)\\
 &\qquad\quad-q(t)\bigg(\frac{\partial f^*}{\partial u}(t)-\frac{\partial f}{\partial u}(t,X(t),X_1(t),Y(t),Z(t),u(t))\bigg)\\
 &\qquad\quad-p_1(t)\frac{\partial b^*}{\partial x}(t)\big(X^*(t)-X(t)\big)-k_1(t)\frac{\partial\sigma^*}{\partial x}(t)\big(X^*(t)-X(t)\big)\\
 &\qquad\quad+q(t)\frac{\partial f^*}{\partial x}(t)\big(X^*(t)-X(t)\big)-p_1(t)\frac{\partial b^*}{\partial x_1}(t)\big(X_1^*(t)-X_1(t)\big)\\
 &\qquad\quad-k_1(t)\frac{\partial\sigma^*}{\partial x_1}(t)\big(X_1^*(t)-X_1(t)\big)+q(t)\frac{\partial f^*}{\partial x_1}(t)\big(X_1^*(t)-X_1(t)\big)\\
 &\qquad\quad-p_1(t)\frac{\partial b^*}{\partial x_2}(t)\big(X_2^*(t)-X_2(t)\big)-k_1(t)\frac{\partial\sigma^*}{\partial x_2}(t)\big(X_2^*(t)-X_2(t)\big)\\
 &\qquad\quad+q(t)\frac{\partial f^*}{\partial y}(t)\big(Y^*(t)-Y(t)\big)+q(t)\frac{\partial f^*}{\partial z}(t)\big(Z^*(t)-Z(t)\big)\bigg\}dt\\
=&\ \mathbb{E}\int_0^T\bigg\{H^*(t)-H\big(t,X(t),X_1(t),X_2(t),Y(t),Z(t),u(t),\vec{p}(t),q(t),\vec{k}(t)\big)\\
 &\qquad\quad-\frac{\partial H^*}{\partial x}(t)\big(X^*(t)-X(t)\big)-\frac{\partial H^*}{\partial x_1}(t)\big(X_1^*(t)-X_1(t)\big)\\
 &\qquad\quad-\frac{\partial H^*}{\partial x_2}(t)\big(X_2^*(t)-X_2(t)\big)-\frac{\partial H^*}{\partial y}(t)\big(Y^*(t)-Y(t)\big)\\
 &\qquad\quad-\frac{\partial H^*}{\partial z}(t)\big(Z^*(t)-Z(t)\big)\bigg\}dt.
\end{aligned}
\end{equation*}
In the above, we have used
\begin{equation*}
\begin{aligned}
  &\frac{\partial\beta^*}{\partial \rho}(t)=\frac{\partial\beta}{\partial \rho}(t,X^*(t),X_1^*(t),X_2^*(t),u^*(t)),\\
%  &\frac{\partial \sigma^*}{\partial \rho}(t)=\frac{\partial\sigma}{\partial \rho}(t,X^*(t),X_1^*(t),X_2^*(t),u^*(t)),\\
%  &\frac{\partial f^*}{\partial \rho}(t)=\frac{\partial f}{\partial \rho}(t,X^*(t),X_1^*(t),X_2^*(t),Y^*(t),Z^*(t),u^*(t)),
\end{aligned}
\end{equation*}
for $\beta=b,\sigma,f$ and $\rho=x,x_1,x_2,y,z,u$. By (\ref{concvexity assumption}) we then have
\begin{equation*}
J(0,\varphi;u^*(\cdot))-J(0,\varphi;u(\cdot))\leq\mathbb{E}\int_0^T\frac{\partial H^*}{\partial u}(t)\big(u^*(t)-u(t)\big)dt\leq0.
\end{equation*}
Thus $u^*(\cdot)$ is an optimal control for {\bf Problem (FBSOCPD)}. The proof is complete.
\end{proof}

\begin{proof}
Note that this sufficient maximum principle is proved for controlled system (\ref{controlled FBSDDE}) other than its special form (\ref{controlled FBSDDE-finite dimensional}), and in the special case that $\phi$ is linear with respect to $x,x_1$ (see (\ref{phi's special case})). The general case to eliminate this linear restriction is open, even for problem of FBSDEs without time delay. See \cite{OS09}, \cite{ShiWu10} for details.
\end{proof}

The following is the main result in this section.

\begin{theorem}
Let (H1)$\sim$(H8)
hold and $(s,x,x_1)\in[0,T)\times\mathbb{R}^2$ be fixed. Suppose that
$u^*(\cdot)$ is an optimal control for {\bf Problem (SROCPD)} and $(X^*(\cdot),Y^*(\cdot),Z^*(\cdot))$ is
the corresponding optimal state which is the solution to (\ref{controlled FBSDDE}) with (\ref{definition of delay terms}). Let
$\vec{p}(\cdot)=(p_1(\cdot),p_2(\cdot),p_3(\cdot))^\top,q(t),\vec{k}(\cdot)=(k_1(\cdot),k_2(\cdot))^\top$ be the solution to adjoint
equation (\ref{adjoint equation}). Assume that the value function $V$ depends on $(s,x,x_1)$ only and $V(s,x,x_1)\in
C^{1,2,1}([0,T]\times\mathbb{R}^2)$, then
\begin{equation}\label{relation with respect time variable}
\begin{aligned}
&\ \frac{\partial V}{\partial t}(t,X^*(t),X_1^*(t))\\
=&\ G\Big(t,X^*(t),X_1^*(t),X_2^*(t),u^*(t),-V(t,X^*(t),X_1^*(t)),-\frac{\partial V}{\partial x}(t,X^*(t),X_1^*(t)),\\
&\quad\ -\frac{\partial^2V}{\partial x^2}(t,X^*(t),X_1^*(t)),-\frac{\partial V}{\partial x_1}(t,X^*(t),X_1^*(t))\Big)\\
=&\ \max\limits_{u\in\mathbf{U}}G\Big(t,X^*(t),X_1^*(t),X_2^*(t),u,-V(t,X^*(t),X_1^*(t)),-\frac{\partial V}{\partial x}(t,X^*(t),X_1^*(t)),\\
&\qquad\quad -\frac{\partial^2V}{\partial x^2}(t,X^*(t),X_1^*(t)),-\frac{\partial V}{\partial x_1}(t,X^*(t),X_1^*(t))\Big),\ a.e.t\in[s,T],a.s.
\end{aligned}
\end{equation}
Further, if $V(s,x,x_1)\in C^{1,3,2}([0,T]\times\mathbb{R}^2)$ and
$\frac{\partial^2V}{\partial t\partial x},\frac{\partial^2V}{\partial t\partial x_1},\frac{\partial^2V}{\partial x\partial x_1},\frac{\partial^3V}{\partial x^2\partial x_1}$ are also continuous, then
\begin{equation}\label{relation with respect state variable}
\left\{
\begin{aligned}
p_1(t)&=\frac{\partial V}{\partial x}(t,X^*(t),X_1^*(t))q(t),\quad\forall t\in[s,T],a.s.,\\
k_1(t)&=\bigg[\frac{\partial^2V}{\partial x^2}(t,X^*(t),X_1^*(t))\sigma^*(t)
       +\frac{\partial V}{\partial x}(t,X^*(t),X_1^*(t))\frac{\partial f^*}{\partial z}(t)\Big)\bigg]q(t),\\
      &\hspace{35mm}a.e.t\in[s,T],a.s.,\\
p_2(t)&=\frac{\partial V}{\partial x_1}(t,X^*(t),X_1^*(t))q(t),\quad\forall t\in[s,T],a.s.,\\
k_2(t)&=\bigg[\frac{\partial^2V}{\partial x\partial x_1}(t,X^*(t),X_1^*(t))\sigma^*(t)
       +\frac{\partial V}{\partial x_1}(t,X^*(t),X_1^*(t))\frac{\partial f^*}{\partial z}(t)\Big)\bigg]q(t),\\
      &\hspace{35mm}a.e.t\in[s,T],a.s.,\\
  q(t)&=\exp\bigg\{\int_s^t\frac{\partial f^*}{\partial y}(r)dW(r)-\frac{1}{2}\int_s^t\Big|\frac{\partial f^*}{\partial z}(r)\Big|^2dr\bigg\},\quad\forall t\in[s,T],a.s.,
\end{aligned}
\right.
\end{equation}
where for notational simplicity, we have used for all $t\in[s,T]$,
\begin{equation*}
\left\{
\begin{aligned}
&b^*(t)\equiv b(t,X^*(t),X_1^*(t),X_2^*(t),u^*(t)),\ \sigma^*(t)\equiv\sigma(t,X^*(t),X_1^*(t),X_2^*(t),u^*(t)),\\
&\frac{\partial f^*}{\partial y}(t)\equiv\frac{\partial f}{\partial z}\Big(t,X^*(t),X_1^*(t),X_2^*(t),-V(t,X^*(t),X_1^*(t),\\
&\hspace{2.5cm}\frac{\partial V}{\partial x}(t,X^*(t),X_1^*(t))\sigma^*(t),u^*(t)\Big),\\
&\frac{\partial f^*}{\partial z}(t)\equiv\frac{\partial f}{\partial z}\Big(t,X^*(t),X_1^*(t),X_2^*(t),-V(t,X^*(t),X_1^*(t),\\
&\hspace{2.5cm}\frac{\partial V}{\partial x}(t,X^*(t),X_1^*(t))\sigma^*(t),u^*(t)\Big).
\end{aligned}
\right.
\end{equation*}
\end{theorem}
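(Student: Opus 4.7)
The plan is to proceed in three stages, mirroring the standard Yong--Zhou style proof of the DPP/MP relationship but adapted to the delayed recursive setting.

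First, I would establish (\ref{relation with respect time variable}) as an immediate consequence of the stochastic verification theorem (Theorem 2.7). Since $V(s,x,x_1)\in C^{1,2,1}$ is assumed to solve the finite-dimensional HJB equation (\ref{generalized HJB equation}), evaluating (\ref{generalized HJB equation}) at $(t,X^*(t),X_1^*(t))$ and using (\ref{stochastic verification theorem-2})---which says $u^*(t)$ attains the supremum of $G$ along the optimal trajectory---gives both the first equality (the HJB identity reads $\partial_tV = G(\cdots,u^*)$) and the maximality identity. No delayed calculus is needed at this step.

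Next, for the explicit expression of $q(t)$, I would observe that the adjoint equation (\ref{adjoint equation}) yields $\partial_y H^* = -q\,\partial_y f^*$ and $\partial_z H^* = -q\,\partial_z f^*$, so that $q(\cdot)$ satisfies the linear SDE $dq(t)=q(t)\partial_y f^*(t)\,dt + q(t)\partial_z f^*(t)\,dW(t)$ with $q(s)=1$ (or $q(0)=1$). The Dol\'eans--Dade exponential then gives the stated formula. This also provides the key identity $dq/q = \partial_y f^*\,dt+\partial_z f^*\,dW$, which will be the bridge between the adjoint processes and the derivatives of $V$.

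The heart of the argument is the identification $p_1(t)=q(t)V_x(t,X^*(t),X_1^*(t))$ and its $x_1$-analogue for $p_2$. My plan is to define $\tilde p_1(t):=q(t)V_x(t,X^*(t),X_1^*(t))$ and apply the delayed It\^o formula (Lemma 2.5) to $V_x(t,X^*(t),X_1^*(t))$ (this uses the extra regularity $V\in C^{1,3,2}$), then combine with the SDE for $q$ via integration by parts. To collapse the result into the adjoint equation for $p_1$, I would differentiate the reduced HJB equation (\ref{generalized HJB equation-finite dimensional-final}) with respect to $x$, invoking the envelope property at $u^*$ so that all terms multiplied by $\partial_x u^*$ drop out. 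Matching drift and diffusion coefficients will yield $\tilde p_1$ satisfying the same BSDE as $p_1$ with the same terminal datum $-q(T)\partial_x\phi(X^*(T),X_1^*(T))$; uniqueness of solutions to that linear BSDE then gives $\tilde p_1=p_1$, and reading off the diffusion coefficient furnishes the formula for $k_1$. The argument for $p_2,k_2$ is parallel, using the $x_1$-derivative of the HJB equation and noting that the extra drift $(X^*-\lambda X_1^*-e^{-\lambda\delta}X_2^*)V_{x_1x_1}$ produced by the delayed It\^o formula matches the corresponding term in the adjoint equation for $p_2$.

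The main obstacle I anticipate is twofold. First, the envelope-type differentiation of the HJB equation must be carried out carefully: since the HJB is a minimum (or maximum) over $u$, interior smoothness of the optimizer is needed to kill the $\partial_x u^*$ terms, and regularity of $u^*$ in $(x,x_1)$ is implicit in assuming $V\in C^{1,3,2}$. Second, consistency with $p_3\equiv 0$ (assumed in the sufficient maximum principle) has to be checked against the reduced structure (\ref{assumption of b,sigma,f})--(\ref{system of first order PDEs}): since $V$ is independent of $x_2$ by hypothesis, the $x_2$-derivatives entering the Hamiltonian produce precisely the PDE system (\ref{system of first order PDEs}) when one writes $\partial_{x_2}H^*=0$ along the optimum, so the ansatz is internally compatible. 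Once these two points are handled, the remaining calculations are bookkeeping with the delayed It\^o formula and the linear BSDE uniqueness theorem.
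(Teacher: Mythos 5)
Your overall architecture coincides with the paper's: the time-derivative identity comes from the finite-dimensional HJB equation together with the optimality condition (\ref{stochastic verification theorem-2}); $q$ is identified as the Dol\'eans--Dade exponential of its linear forward SDE; and $p_1,k_1$ (resp. $p_2,k_2$) are obtained by applying the delayed It\^o formula to $\frac{\partial V}{\partial x}(t,X^*(t),X_1^*(t))q(t)$ (resp. its $x_1$-analogue), substituting a differentiated form of the HJB equation, and invoking uniqueness of the linear adjoint BSDE. Two points, however, need repair.

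First, the step you yourself flag as the main obstacle --- differentiating the reduced HJB equation in $x$ and killing the $\partial_x u^*$ terms by an envelope argument --- is not how the paper proceeds, and as you state it, it is genuinely weaker: it presupposes that the optimizer is interior (so that $\partial_u G=0$; recall $\mathbb{U}$ is only a convex subset of $\mathbb{R}$, so boundary optima are possible) and that $u^*$ admits a feedback representation differentiable in $(x,x_1)$, which is \emph{not} implied by $V\in C^{1,3,2}$. The paper avoids both issues by the standard Yong--Zhou device, recorded as (\ref{Yong and Zhou's equality}): freeze the value $u^*(t)$ and observe that the map $(x,x_1,x_2)\mapsto-\frac{\partial V}{\partial t}(t,x,x_1)+G\big(t,x,x_1,x_2,u^*(t),-V,-\frac{\partial V}{\partial x},-\frac{\partial^2 V}{\partial x^2},-\frac{\partial V}{\partial x_1}\big)$ is nonpositive everywhere (by the HJB equation) and vanishes at $(X^*(t),X_1^*(t),X_2^*(t))$ (by the first part of the theorem), hence has vanishing $x$- and $x_1$-derivatives there. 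This yields exactly the second-order identities you need with no regularity or interiority assumption on $u^*$; you should replace the envelope step by it.

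Second, before matching the drift of $\frac{\partial V}{\partial x}\cdot q$ against $-\frac{\partial H^*}{\partial x}$ you need the identifications $Y^*(t)=-V(t,X^*(t),X_1^*(t))$ and $Z^*(t)=-\sigma^*(t)\frac{\partial V}{\partial x}(t,X^*(t),X_1^*(t))$, since the adjoint drift carries $\frac{\partial f}{\partial y},\frac{\partial f}{\partial z}$ evaluated at $(Y^*,Z^*)$ while your It\^o computation produces them evaluated at $(-V,-\sigma^*\frac{\partial V}{\partial x})$. The paper obtains these from the DPP (\ref{generalized dynamic programming principle by Chen and Wu}) plus the martingale representation theorem and the delayed It\^o formula (a computation that simultaneously delivers the first equality in (\ref{relation with respect time variable})); in your scheme they follow from uniqueness of solutions to the BSDDE (\ref{controlled BSDDE}), but this must be made explicit. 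Finally, the aside about $p_3\equiv0$ is unnecessary: Theorem 3.2 makes no assertion about $p_3$, and (\ref{p3=0}) is a hypothesis only of the sufficient maximum principle.
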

For simplicity, in this section we write $X=X^{s,x,x_1;u}$ and $X^*=X^{s,x,x_1;u^*}$, etc.
\begin{proof}
The proof method is inspired from Chapter 4, \cite{YZ99} and \cite{ShiYu13}. First, by (\ref{generalized dynamic programming principle by Chen and Wu}), similar to \cite{ChenWu12}, we can get that for any $t\in[s,T]$,
\begin{equation*}
\begin{aligned}
&V(t,X^*(t),X_1^*(t))=-Y^*(t)\\&=-\mathbb{E}^{s,x,x_1;u^*}\bigg[\int_t^Tf\big(r,X^*(r),X_1^*(r),X_2^*(r),Y^*(r),Z^*(r),u^*(r)\big)dr\\
&\hspace{2.2cm}+\phi(X^*(T),X_1^*(T))\bigg].
\end{aligned}
\end{equation*}
Define a square-integrable $\mathcal{F}_t^s$-martingale (recall that $s\in[0,T)$ is fixed)
\begin{equation*}
\begin{aligned}
m(t)&:=-\mathbb{E}\Big[\int_s^Tf\big(r,X^*(r),X_1^*(r),X_2^*(r),Y^*(r),Z^*(r),u^*(r)\big)dr\\
&\hspace{1.5cm}+\phi(X^*(T),X_1^*(T))\Big|\mathcal{F}_t^s\Big],
\end{aligned}
\end{equation*}
for any $t\in[s,T]$. Thus, by the martingale representation theorem, there exists a
unique $M(\cdot)\in L_\mathcal{F}^2([s,T];\mathbb{R})$ such that
\begin{equation*}
m(t)=m(s)+\int_s^tM(r)dW(r)=V(s,x,x_1)+\int_s^tM(r)dW(r),\quad t\in[s,T].
\end{equation*}
So
\begin{equation*}
\begin{aligned}
 V(t,X^*(t),X_1^*(t))=&\ -\int_t^Tf\big(r,X^*(r),X_1^*(r),X_2^*(r),Y^*(r),Z^*(r),u^*(r)\big)dr\\
                    &\ -\int_s^TM(r)dW(r)+V(T,X^*(T),X_1^*(T)).
\end{aligned}
\end{equation*}
On the other hand, applying delayed It\^{o}'s formula (\ref{Ito's formula}) to $V(t,X^*(t),X_1^*(t))$, we obtain
\begin{equation*}\label{applying Ito's formula to V(t)}
\begin{aligned}
 &dV(t,X^*(t),X_1^*(t))=\bigg\{\frac{\partial V}{\partial t}(t,X^*(t),X_1^*(t))+b^*(t)\frac{\partial V}{\partial x}(t,X^*(t),X_1^*(t))\\
 &\quad+\frac{1}{2}\sigma^{*2}(t)\frac{\partial^2V}{\partial x^2}(t,X^*(t),X_1^*(t))\\
 &\quad+\big[X^*(t)-\lambda X_1^*(t)-e^{-\lambda\delta}X_2^*(t)\big]\frac{\partial V}{\partial x_1}(t,X^*(t),X_1^*(t))\bigg\}dt\\
 &\quad+\sigma^*(t)\frac{\partial V}{\partial x}(t,X^*(t),X_1^*(t))dW(t).
\end{aligned}
\end{equation*}
Comparing the above two equations, we conclude that
\begin{equation}\label{relation}
\left\{
\begin{aligned}
 &\frac{\partial V}{\partial t}(t,X^*(t),X_1^*(t))+b^*(t)\frac{\partial V}{\partial x}(t,X^*(t),X_1^*(t))+\frac{1}{2}\sigma^{*2}(t)\frac{\partial^2V}{\partial x^2}(t,X^*(t),X_1^*(t))\\
 &\quad+\big(X^*(t)-\lambda X_1^*(t)-e^{-\lambda\delta}X_2^*(t)\big)\frac{\partial V}{\partial x_1}(t,X^*(t),X_1^*(t))\\
 &=f\big(t,X^*(t),X_1^*(t),X_2^*(t),Y^*(t),Z^*(t),u^*(t)\big),\quad\forall t\in[s,T],a.s.,\\
 &\sigma^*(t)\frac{\partial V}{\partial x}(t,X^*(t),X_1^*(t))=M(t),\quad a.e.\ t\in[s,T],a.s.
\end{aligned}
\right.
\end{equation}
However, by the uniqueness of solution to BSDDE (\ref{controlled BSDDE}), we have
\begin{equation*}
\left\{
\begin{aligned}
&Y^*(t)=-V(t,X^*(t),X_1^*(t)),\\
&Z^*(t)=-\frac{\partial V}{\partial x}(t,X^*(t),X_1^*(t))\sigma^*(t),\quad a.e.\ t\in[s,T],a.s.
\end{aligned}
\right.
\end{equation*}

Since $V(s,x,x_1)\in C^{1,2,1}([0,T]\times\mathbb{R}^2)$, it satisfies the generalized HJB equation (\ref{generalized HJB equation}) by Theorem 2.6, which implies (\ref{relation with respect time variable}). Also, by (\ref{generalized HJB equation}), we have
\begin{equation}\label{Yong and Zhou's equality}
\begin{aligned}
  0=&\ -\frac{\partial V}{\partial t}(t,X^*(t),X_1^*(t))+G\Big(t,X^*(t),X_1^*(t),X_2^*(t),u^*(t),-V(t,X^*(t),X_1^*(t)),\\
    &\qquad-\frac{\partial V}{\partial x}(t,X^*(t),X_1^*(t)),-\frac{\partial^2V}{\partial x^2}(t,X^*(t),X_1^*(t)),
     -\frac{\partial V}{\partial x_1}(t,X^*(t),X_1^*(t))\Big)\\
\geq&\ -\frac{\partial V}{\partial t}(t,x,x_1)+G\Big(t,x,x_1,x_2,u^*(t),-V(t,x,x_1),-\frac{\partial V}{\partial x}(t,x,x_1),\\
    &\qquad-\frac{\partial^2V}{\partial x^2}(t,x,x_1),-\frac{\partial V}{\partial x_1}(t,x,x_1)\Big),\quad \forall (x,x_1,x_2)\in\mathbb{R}^3.
\end{aligned}
\end{equation}
Consequently, if $V\in C^{1,3,2}([0,T]\times\mathbb{R}^2)$ and $\frac{\partial^2V}{\partial t\partial x},\frac{\partial^2V}{\partial x\partial x_1}$ are continuous, then
\begin{equation*}
\begin{aligned}
 &\frac{\partial}{\partial x}\bigg\{-\frac{\partial V}{\partial t}(t,x,x_1)+G\Big(t,x,x_1,x_2,u^*(t),-V(t,x,x_1),\\
 &\qquad-\frac{\partial V}{\partial x}(t,x,x_1),-\frac{\partial^2V}{\partial x^2}(t,x,x_1),
  -\frac{\partial V}{\partial x_1}(t,x,x_1)\Big)\bigg\}\bigg|_{x=X^*(t)}=0,\quad t\in[s,T].
\end{aligned}
\end{equation*}
This is equivalent to (recall the definition of $G$ in (\ref{generalized Hamiltonian function}))
\begin{equation*}
\begin{aligned}
 &\quad-\frac{\partial^2V}{\partial t\partial x}(t,X^*(t),X_1^*(t))-\frac{\partial^2V}{\partial x^2}(t,X^*(t),X_1^*(t))b^*(t)\\
 &\quad-\frac{\partial V}{\partial x}(t,X^*(t),X_1^*(t))\frac{\partial b^*}{\partial x}(t)-\frac{\partial V}{\partial x_1}(t,X^*(t),X_1^*(t))\\
 &\quad-\frac{1}{2}\sigma^{*2}(t)\frac{\partial^3V}{\partial x^3}(t,X^*(t),X_1^*(t))
  -\frac{\partial\sigma^*}{\partial x}(t)\frac{\partial^2V}{\partial x^2}(t,X^*(t),X_1^*(t))\sigma^*(t)\\
 &\quad-\big(X^*(t)-\lambda X_1^*(t)-e^{-\lambda\delta}X_2^*(t)\big)\frac{\partial^2V}{\partial x\partial x_1}(t,X^*(t),X_1^*(t))\\
 &\quad+\frac{\partial f^*}{\partial x}(t)-\frac{\partial f^*}{\partial y}(t)\frac{\partial V}{\partial x}(t,X^*(t),X_1^*(t))\\
 &\quad-\frac{\partial f^*}{\partial z}(t)\bigg[\frac{\partial^2V}{\partial x^2}(t,X^*(t),X_1^*(t))\sigma^*(t)
  +\frac{\partial V}{\partial x}(t,X^*(t),X_1^*(t))\frac{\partial\sigma^*}{\partial x}(t)\bigg]=0,
\end{aligned}
\end{equation*}
$\forall t\in[s,T]$. On the other hand, applying (\ref{Ito's formula}) to $\frac{\partial V}{\partial x}(t,X^*(t),X_1^*(t))$, we get
\begin{equation*}
\begin{aligned}
&\qquad d\frac{\partial V}{\partial x}(t,X^*(t),X_1^*(t))\\
&=\bigg\{\frac{\partial^2V}{\partial t\partial x}(t,X^*(t),X_1^*(t))+\frac{\partial^2V}{\partial x^2}(t,X^*(t),X_1^*(t))b^*(t)\\
&\qquad+\frac{1}{2}\sigma^{*2}(t)\frac{\partial^3V}{\partial x^3}(t,X^*(t),X_1^*(t))\\
&\qquad+\big[X^*(t)-\lambda X_1^*(t)-e^{-\lambda\delta}X_2^*(t)\big]\frac{\partial^2V}{\partial x\partial x_1}(t,X^*(t),X_1^*(t))\bigg\}dt\\
&\quad+\frac{\partial^2V}{\partial x^2}(t,X^*(t),X_1^*(t))\sigma^*(t)dW(t)\\
&=\bigg\{-\frac{\partial V}{\partial x}(t,X^*(t),X_1^*(t))\frac{\partial b^*}{\partial x}(t)
 -\frac{\partial\sigma^*}{\partial x}(t)\frac{\partial^2V}{\partial x^2}(t,X^*(t),X_1^*(t))\sigma^*(t)\\
&\qquad-\frac{\partial V}{\partial x_1}(t,X^*(t),X_1^*(t))+\frac{\partial f^*}{\partial x}(t)
 -\frac{\partial f^*}{\partial y}(t)\frac{\partial V}{\partial x}(t,X^*(t),X_1^*(t))\\
&\quad-\frac{\partial f^*}{\partial z}(t)\bigg[\frac{\partial^2V}{\partial x^2}(t,X^*(t),X_1^*(t))\sigma^*(t)
  +\frac{\partial V}{\partial x}(t,X^*(t),X_1^*(t))\frac{\partial\sigma^*}{\partial x}(t)\bigg]\bigg\}dt\\
&\quad+\frac{\partial^2V}{\partial x^2}(t,X^*(t),X_1^*(t))\sigma^*(t)dW(t).
\end{aligned}
\end{equation*}
Applying again (\ref{Ito's formula}) to $\frac{\partial V}{\partial x}(t,X^*(t),X_1^*(t))q(t)$, noting that
\begin{equation*}
\frac{\partial V}{\partial x}(T,X^*(T),X_1^*(T))=-\frac{\partial\phi}{\partial x}(X^*(T),X_1^*(T)),
\end{equation*}
we have
\begin{equation*}
\begin{aligned}
&d\bigg\{\frac{\partial V}{\partial x}(t,X^*(t),X_1^*(t))q(t)\bigg\}
=\bigg\{-\frac{\partial b^*}{\partial x}(t)\frac{\partial V}{\partial x}(t,X^*(t),X_1^*(t))q(t)\\
&\qquad-\frac{\partial\sigma^*}{\partial x}(t)\bigg[\frac{\partial^2V}{\partial x^2}(t,X^*(t),X_1^*(t))\sigma^*(t)
 +\frac{\partial f^*}{\partial z}(t)\frac{\partial V}{\partial x}(t,X^*(t),X_1^*(t))\bigg]q(t)\\
&\qquad-\frac{\partial V}{\partial x_1}(t,X^*(t),X_1^*(t))q(t)+\frac{\partial f^*}{\partial x}(t)q(t)\bigg\}dt\\
&\quad+\bigg[\frac{\partial^2V}{\partial x^2}(t,X^*(t),X_1^*(t))\sigma^*(t)
 +\frac{\partial f^*}{\partial z}(t)\frac{\partial V}{\partial x}(t,X^*(t),X_1^*(t))\bigg]q(t)dW(t).
\end{aligned}
\end{equation*}
Hence, by the uniqueness of the solution to the $p_1(t)$ part of (\ref{adjoint equation}), we have
\begin{equation}\label{relation of p1}
\left\{
\begin{aligned}
p_1(t)&=\frac{\partial V}{\partial x}(t,X^*(t),X_1^*(t))q(t),\quad\forall t\in[s,T],a.s.,\\
k_1(t)&=\bigg[\frac{\partial^2V}{\partial x^2}(t,X^*(t),X_1^*(t))\sigma^*(t)
       +\frac{\partial V}{\partial x}(t,X^*(t),X_1^*(t))\frac{\partial f^*}{\partial z}(t)\Big)\bigg]q(t),\\
      &\hspace{55mm}a.e.t\in[s,T],a.s.
\end{aligned}
\right.
\end{equation}

Similarly, if $V\in C^{1,3,2}([0,T]\times\mathbb{R}^2)$ and $\frac{\partial^2V}{\partial t\partial x_1},\frac{\partial^2V}{\partial x\partial x_1}$ are continuous, then
\begin{equation*}
\begin{aligned}
 &\frac{\partial}{\partial x_1}\bigg\{-\frac{\partial V}{\partial t}(t,x,x_1)+G\Big(t,x,x_1,x_2,u^*(t),-V(t,x,x_1),\\
 &\qquad-\frac{\partial V}{\partial x}(t,x,x_1),-\frac{\partial^2V}{\partial x^2}(t,x,x_1),
  -\frac{\partial V}{\partial x_1}(t,x,x_1)\Big)\bigg\}\bigg|_{x_1=X_1^*(t)}=0,\ t\in[s,T].
\end{aligned}
\end{equation*}
This is equivalent to
\begin{equation*}
\begin{aligned}
 &\quad-\frac{\partial^2V}{\partial t\partial x_1}(t,X^*(t),X_1^*(t))-\frac{\partial^2V}{\partial x\partial x_1}(t,X^*(t),X_1^*(t))b^*(t)\\
 &\quad-\frac{\partial V}{\partial x}(t,X^*(t),X_1^*(t))\frac{\partial b^*}{\partial x_1}(t)
  +\lambda\frac{\partial V}{\partial x_1}(t,X^*(t),X_1^*(t))\\
 &\quad-\frac{1}{2}\sigma^{*2}(t)\frac{\partial^3V}{\partial x^2\partial x_1}(t,X^*(t),X_1^*(t))
  -\frac{\partial\sigma^*}{\partial x_1}(t)\frac{\partial^2V}{\partial x^2}(t,X^*(t),X_1^*(t))\sigma^*(t)\\
 &\quad-\big[X^*(t)-\lambda X_1^*(t)-e^{-\lambda\delta}X_2^*(t)\big]\frac{\partial^2V}{\partial x_1^2}(t,X^*(t),X_1^*(t))\\
 &\quad+\frac{\partial f^*}{\partial x_1}(t)-\frac{\partial f^*}{\partial y}(t)\frac{\partial V}{\partial x_1}(t,X^*(t),X_1^*(t))\\
 &\quad-\frac{\partial f^*}{\partial z}(t)\bigg[\frac{\partial^2V}{\partial x\partial x_1}(t,X^*(t),X_1^*(t))\sigma^*(t)
  +\frac{\partial V}{\partial x}(t,X^*(t),X_1^*(t))\frac{\partial\sigma^*}{\partial x_1}(t)\bigg]=0,
\end{aligned}
\end{equation*}
$\forall t\in[s,T]$. On the other hand, applying (\ref{Ito's formula}) to $\frac{\partial V}{\partial x_1}(t,X^*(t),X_1^*(t))$, we get
\begin{equation*}
\begin{aligned}
&d\frac{\partial V}{\partial x_1}(t,X^*(t),X_1^*(t))=\bigg\{\frac{\partial^2V}{\partial t\partial x_1}(t,X^*(t),X_1^*(t))
 +\frac{\partial^2V}{\partial x\partial x_1}(t,X^*(t),X_1^*(t))b^*(t)\\
&\quad+\frac{1}{2}\sigma^{*2}(t)\frac{\partial^3V}{\partial x^2\partial x_1}(t,X^*(t),X_1^*(t))+\big[X^*(t)-\lambda X_1^*(t)-e^{-\lambda\delta}X_2^*(t)\big]
\end{aligned}
\end{equation*}
\begin{equation*}
\begin{aligned}
&\quad\times\frac{\partial^2V}{\partial x_1^2}(t,X^*(t),X_1^*(t))\bigg\}dt+\frac{\partial^2V}{\partial x\partial x_1}(t,X^*(t),X_1^*(t))\sigma^*(t)dW(t)\\
&=\bigg\{-\frac{\partial V}{\partial x}(t,X^*(t),X_1^*(t))\frac{\partial b^*}{\partial x_1}(t)
 -\frac{\partial\sigma^*}{\partial x_1}(t)\frac{\partial^2V}{\partial x^2}(t,X^*(t),X_1^*(t))\sigma^*(t)\\
&\qquad+\lambda\frac{\partial V}{\partial x_1}(t,X^*(t),X_1^*(t))+\frac{\partial f^*}{\partial x_1}(t)
 -\frac{\partial f^*}{\partial y}(t)\frac{\partial V}{\partial x_1}(t,X^*(t),X_1^*(t))\\
&\qquad-\frac{\partial f^*}{\partial z}(t)\bigg[\frac{\partial^2V}{\partial x\partial x_1}(t,X^*(t),X_1^*(t))\sigma^*(t)
  +\frac{\partial V}{\partial x}(t,X^*(t),X_1^*(t))\frac{\partial\sigma^*}{\partial x_1}(t)\bigg]\bigg\}dt\\
&\quad+\frac{\partial^2V}{\partial x\partial x_1}(t,X^*(t),X_1^*(t))\sigma^*(t)dW(t).
\end{aligned}
\end{equation*}
Applying again (\ref{Ito's formula}) to $\frac{\partial V}{\partial x_1}(t,X^*(t),X_1^*(t))q(t)$, noting that
\begin{equation*}
\frac{\partial V}{\partial x_1}(T,X^*(T),X_1^*(T))=-\frac{\partial\phi}{\partial x_1}(X^*(T),X_1^*(T)),
\end{equation*}
we have
\begin{equation*}
\begin{aligned}
&d\bigg\{\frac{\partial V}{\partial x_1}(t,X^*(t),X_1^*(t))q(t)\bigg\}
 =\bigg\{-\frac{\partial b^*}{\partial x_1}(t)\frac{\partial V}{\partial x}(t,X^*(t),X_1^*(t))q(t)\\
&\qquad-\frac{\partial\sigma^*}{\partial x_1}(t)\bigg[\frac{\partial^2V}{\partial x^2}(t,X^*(t),X_1^*(t))\sigma^*(t)
 +\frac{\partial f^*}{\partial z}(t)\frac{\partial V}{\partial x}(t,X^*(t),X_1^*(t))\bigg]q(t)\\
&\qquad+\lambda\frac{\partial V}{\partial x_1}(t,X^*(t),X_1^*(t))q(t)+\frac{\partial f^*}{\partial x_1}(t)q(t)\bigg\}dt\\
&\quad+\bigg[\frac{\partial^2V}{\partial x\partial x_1}(t,X^*(t),X_1^*(t))\sigma^*(t)
 +\frac{\partial f^*}{\partial z}(t)\frac{\partial V}{\partial x_1}(t,X^*(t),X_1^*(t))\bigg]q(t)dW(t).
\end{aligned}
\end{equation*}
Hence, by the uniqueness of the solution to the $p_2(t)$ part of adjoint equation (\ref{adjoint equation}), we have
\begin{equation}\label{relation of p2}
\left\{
\begin{aligned}
p_2(t)&=\frac{\partial V}{\partial x_1}(t,X^*(t),X_1^*(t))q(t),\quad\forall t\in[s,T],a.s.,\\
k_2(t)&=\bigg[\frac{\partial^2V}{\partial x\partial x_1}(t,X^*(t),X_1^*(t))\sigma^*(t)
       +\frac{\partial V}{\partial x_1}(t,X^*(t),X_1^*(t))\frac{\partial f^*}{\partial z}(t)\Big)\bigg]q(t),\\
      &\hspace{55mm}a.e.t\in[s,T],a.s.
\end{aligned}
\right.
\end{equation}
And finally
\begin{equation*}
  q(t)=\exp\bigg\{\int_s^t\frac{\partial f^*}{\partial y}(r)dW(r)-\frac{1}{2}\int_s^t\Big|\frac{\partial f^*}{\partial z}(r)\Big|^2dr\bigg\},\quad\forall t\in[s,T],a.s.
\end{equation*}
can be easily obtained by solving the forward equation of $q(t)$ directly. The proof is complete.
\end{proof}

\section{Application to Consumption and Portfolio Optimization with Recursive Utility}

In this section, we discuss a consumption and portfolio optimization problem with recursive utility in the financial market. The financial framework in this problem is initiated introduced by Chang et al. \cite{CPY11}, with classical cost functional. In this paper, we generalize their model to the case with recursive utility. The optimal portfolio and consumption strategies are obtained by both dynamic programming and maximum principle approaches, in the meanwhile the relations we obtained in Theorem 3.2 are illustrated.

Let us first describe the environment of the financial market. Consider an investor who can invest his money into a risky asset and a riskless
asset. The risky asset can be a stock, a mutual fund, etc. The riskless asset earns a fixed interest rate $r>0$.
We can treat the money invested on the riskless asset as money deposited into a bank account. We assume that
the investor can consume his/her wealth.

Let $U(t)$ be the amount invested on the risky asset and $V(t)$ is the amount invested on the riskless asset. The
total wealth is given by $X(t)=U(t)+V(t)$. We consider the situation in which the performance of the risky
asset has some memory (delay). Because many investors will look at an asset's past performance before they
invest their money on the asset, the increasing investment performance of their wealth in the past tends to drive
the investors to invest more on the risky asset, hence it can push the price of the risky asset even higher. On
the other hand, if the price has been decreasing a lot, investors tend to sell the asset and invest on other assets,
which will drive the price to go down further. To describe this phenomenon, we assume that the performance
of the risky asset depends on the following delay variables $X_1(t)$ and $X_2(t)$:
\begin{equation}\label{definition of delay terms 2}
\begin{aligned}
X_1(t)=\int_{-\delta}^0e^{\lambda\tau}X(t+\tau)d\tau,\quad
X_2(t)=X(t-\delta),\ t\in[s,T],
\end{aligned}
\end{equation}
for any initial time $s\in[0,T)$. Here $\lambda$ is a constant and $\delta>0$ is the delay parameter.
The parameter $\delta$ gives us the duration of the past that the investor usually cares about.

Let $\{W(t),t\geq0\}$ be a one-dimensional standard Brownian motion defined on a probability space $(\Omega,\mathcal{F},\mathbb{P})$.
We assume that the filtration $\mathcal{F}^0_t=\sigma\{W(\tau);0\leq\tau\leq t\}$ is augmented by
all the $\mathbb{P}$-null sets in $\mathcal{F}$.

We assume that $U(t)$ and $V(t)$ follow the stochastic differential equations:
\begin{equation}\label{price of risky asset}
dU(t)=\big[\mu_0U(t)+\mu_1X_1(t)+\mu_2X_2(t)\big]dt+\sigma U(t)dW(t),
\end{equation}
\begin{equation}\label{price of riskless asset}
\hspace{-4.65cm}dV(t)=\big[rV(t)-C(t)\big]dt,
\end{equation}
where $\mu_0,\mu_1,\mu_2$ and $\sigma$ are positive constants, and $C(t)$ is the consumption rate.

Add them together, and use the fact that $X(t)=U(t)+V(t)$, then we get the equation for the wealth $X(t)$:
\begin{equation}\label{wealth equation-1}
\left\{
\begin{aligned}
dX(t)=&\ \big[\mu_0U(t)+\mu_1X_1(t)+\mu_2X_2(t)+rV(t)-C(t)\big]dt\\
      &\ +\sigma U(t)dW(t),\ t\in[0,T],\\
 X(t)=&\ \varphi(t),\ t\in[-\delta,0],
\end{aligned}
\right.
\end{equation}
where continuous function $\varphi:[-\delta,0]\rightarrow\mathbb{R}$ is the initial condition for information about $X(t)$ for $t\in[-\delta,0]$.

Further, instead of $U(t)$ and $C(t)$, we use $c(t)\equiv C(t)/X(t)$ and $u(t)\equiv U(t)/X(t)$ as our consumption and portfolio control, respectively (note that $X(t)>0$, a.s. is proved in Lemma 2.2 of \cite{CPY11}). It is easy to see that $V(t)=X(t)-U(t)=X(t)(1-u(t))$. Now we can rewrite the equation for $X(t)$ as
\begin{equation}\label{wealth equation-2}
\left\{
\begin{aligned}
dX(t)=&\ \big[((\mu_0-r)u(t)-c(t)+r)X(t)+\mu_1X_1(t)+\mu_2X_2(t)\big]dt\\
      &\ +\sigma u(t)X(t)dW(t),\ t\in[s,T],\\
 X(t)=&\ \varphi(t)>0,\ t\in[-\delta,0].
\end{aligned}
\right.
\end{equation}

Now we define the admissible control space $\Pi$ for the control variables $u(t)$ and $c(t)$.

\begin{definition}
(Admissible Control Space) A control strategy $(u(t),c(t))$ is said to be in the admissible
control space $\Pi$ if it satisfies the following conditions:
\begin{equation*}
\begin{aligned}
&(i)\quad (u(t),c(t))\mbox{ is }\mathcal{F}_t\mbox{-adapted processes};\\
&(ii)\quad c(t)\geq0,\forall t\in[0,T];\\
&(iii)\quad \mbox{For any }t\in[0,T], \mbox{ we have }\\
&\qquad\quad|u(t)X(t)|\leq\Lambda_1|X(t)+\mu_2X_1(t)|,\quad|c(t)X(t)|\leq\Lambda_2|X(t)+\mu_2X_1(t)|,
\end{aligned}
\end{equation*}
where $\Lambda_1,\Lambda_2>0$ are positive constants.
\end{definition}

\begin{remark}
The condition (iii) is sufficient to obtain the result in Lemma 2.2 of \cite{CPY11}.
\end{remark}

The investor wants to minimize the following recursive utility
\begin{equation}\label{recursive utility functional example}
J(u(\cdot),c(\cdot)):=-Y(t)\big|_{t=0},
\end{equation}
over the admissible control space $\Pi$, where
\begin{equation}\label{controlled BSDDE example}
\left\{
\begin{aligned}
-dY(t)&=\big[-\beta Y(t)+\frac{1}{\gamma}\big(c(t)X(t)\big)^\gamma\big]dt-Z(t)dW(t),\quad t\in[0,T],\\
  Y(T)&=\frac{1}{\gamma}\big(X(T)+\theta X_1(T)\big)^\gamma.
\end{aligned}
\right.
\end{equation}
and $\beta\geq0,\gamma\in(-\infty,1),\gamma\neq0,\theta\in\mathbb{R}$ are constants.

\begin{remark}
The recursive utility functional defined in (\ref{controlled BSDDE example}) with generator
\begin{equation}\label{generator of recursive utility example}
f(t,x,x_1,y,z,u,c)=-\beta y+\frac{1}{\gamma}(cx)^\gamma,
\end{equation}
stands for some standard additive utility of recursive type. It can be easily checked that $f$ defined above is concave with respect to $(c,y)$ and increasing with respect to $c$, which are classical properties that utility functions must satisfy. Recursive utility such as (\ref{generator of recursive utility example}) is meaningful and nontrivial generalization of the classical additive utility and has many applications in mathematical economics and mathematical finance. For more details about recursive utilities, see \cite{SS99}, \cite{EPQ97,EPQ01} and the references therein.
\end{remark}

Since we are going to involve the dynamic programming principle, we will adopt the formulation as in Section 2. For given $(s,\varphi)\in[0,T)\times C([-\delta,0];\mathbb{R})$ and admissible control $(u(\cdot),c(\cdot))$, the wealth equation is
\begin{equation}\label{wealth equation example DPP}
\left\{
\begin{aligned}
dX^{s,\varphi;u,c}(t)=&\ \Big\{\big[(\mu_0-r)u(t)-c(t)+r\big]X^{s,\varphi;u,c}(t)+\mu_1X_1^{s,\varphi;u,c}(t)\\
                      &\quad+\mu_2X_2^{s,\varphi;u,c}(t)\Big\}dt+\sigma u(t)X^{s,\varphi;u,c}(t)dW(t),\ t\in[s,T],\\
 X^{s,\varphi;u,c}(t)=&\ \varphi(t)>0,\ t\in[-\delta,0].
\end{aligned}
\right.
\end{equation}
and the recursive utility functional is defined as
\begin{equation}\label{recursive utility functional example DPP}
\begin{aligned}
J(s,\varphi;u(\cdot),c(\cdot))=-Y^{s,\varphi;u,c}(t)\big|_{t=s},
\end{aligned}
\end{equation}
where
\begin{equation}\label{controlled BSDDE example DPP}
\left\{
\begin{aligned}
-dY^{s,\varphi;u,c}(t)&=\Big\{-\beta Y^{s,\varphi;u,c}(t)+\frac{1}{\gamma}\big(c(t)X^{s,\varphi;u,c}(t)\big)^\gamma\Big\}dt\\
                      &\quad-Z^{s,\varphi;u,c}(t)dW(t),\ t\in[s,T],\\
  Y^{s,\varphi;u,c}(T)&=\frac{1}{\gamma}\big(X^{s,\varphi;u,c}(T)+\theta X_1^{s,\varphi;u,c}(T)\big)^\gamma.
\end{aligned}
\right.
\end{equation}

This problem can be reformulated as follows. The state process $(X^{s,\varphi;u,c}(\cdot),\\Y^{s,\varphi;u,c}(\cdot),Z^{s,\varphi;u,c}(\cdot))$ of our
system is described by the following coupled FBSDDE
\begin{equation}\label{controlled FBSDDE example}
\left\{
\begin{aligned}
 dX^{s,\varphi;u,c}(t)=&\ \Big\{\big[(\mu_0-r)u(t)-c(t)+r\big]X^{s,\varphi;u,c}(t)+\mu_1X_1^{s,\varphi;u,c}(t)\\
                       &\quad+\mu_2X_2^{s,\varphi;u,c}(t)\Big\}dt+\sigma u(t)X^{s,\varphi;u,c}(t)dW(t),\\
-dY^{s,\varphi;u,c}(t)=&\ \Big\{-\beta Y^{s,\varphi;u,c}(t)+\frac{1}{\gamma}\big(c(t)X^{s,\varphi;u,c}(t)\big)^\gamma\Big\}dt\\
                       &\ -Z^{s,\varphi;u,c}(t)dW(t),\quad t\in[s,T],\\
  X^{s,\varphi;u,c}(t)=&\ \varphi(t)>0,\ t\in[s-\delta,s],\\
  Y^{s,\varphi;u,c}(T)=&\ \frac{1}{\gamma}\big(X^{s,\varphi;u,c}(T)+\theta X_1^{s,\varphi;u,c}(T)\big)^\gamma,
\end{aligned}
\right.
\end{equation}
and the cost functional is given of the form
\begin{equation}\label{cost functional for FBSDDE example}
\begin{aligned}
 &J(s,\varphi;u(\cdot),c(\cdot))=-Y^{s,\varphi;u,c}(s)\\
=&\ \mathbb{E}^{s,\varphi;u,c}\bigg\{\int_s^T\Big[-\beta Y^{s,\varphi;u,c}(t)+\frac{1}{\gamma}\big(c(t)X^{s,\varphi;u,c}(t)\big)^\gamma\Big]dt\\
 &\qquad\qquad+\frac{1}{\gamma}\big(X^{s,\varphi;u,c}(T)+\theta X_1^{s,\varphi;u,c}(T)\big)^\gamma\bigg\}.
\end{aligned}
\end{equation}

The consumption and portfolio optimization problem is to find an admissible $(u^*(\cdot),c^*(\cdot))$ such that
\begin{equation}\label{value function for recursive utility example}
V(s,\varphi)=J(s,\varphi;u^*(\cdot),c^*(\cdot))=\essinf\limits_{(u(\cdot),c(\cdot))\in\Pi}J(s,\varphi;u(\cdot),c(\cdot)),
\end{equation}
for all $(s,\varphi)\in[0,T)\times C([-\delta,0];\mathbb{R})$.

\subsection{Dynamic Programming Approach}

In this subsection, we solve the above consumption and portfolio optimization problem, applying Bellman's dynamic programming approach.

In this case, if the value function $V$ only depends on $\varphi$ through $(x,x_1)$, i.e.,
\begin{equation}\label{value function example-DPP}
V(s,\varphi)=V(s,x,x_1,x_2)=V(s,x,x_1),
\end{equation}
where $V:[0,T]\times\mathbb{R}^2\rightarrow\mathbb{R}$ with
\begin{equation*}
x=x(\varphi)=\varphi(0),\quad x_1=x_1(\varphi)=\int_{-\delta}^0e^{\lambda\tau}\varphi(\tau)d\tau,
\end{equation*}
then the generalized HJB equation (\ref{generalized HJB equation}) that $V(s,x,x_1)$ should satisfy, reduces to
\begin{equation}\label{HJB equation example}
\left\{
\begin{aligned}
&0=\frac{\partial V}{\partial s}(s,x,x_1)+\max\limits_{u}\bigg\{\frac{1}{2}\sigma^2u^2x^2\frac{\partial^2 V}{\partial x^2}(s,x,x_1)
 +(\mu_0-r)ux\frac{\partial V}{\partial x}(s,x,x_1)\bigg\}\\
&\qquad+\max\limits_{c\geq0}\bigg\{-cx\frac{\partial V}{\partial x}(s,x,x_1)+\frac{1}{\gamma}c^\gamma x^\gamma\bigg\}
 +(rx+\mu_1x_1+\mu_2x_2)\frac{\partial V}{\partial x}(s,x,x_1)\\
&\qquad+(x-\lambda x_1-e^{-\lambda \delta}x_2)\frac{\partial V}{\partial x_1}(s,x,x_1)-\beta V(s,x,x_1),\ \forall x_2\in\mathbb{R},\\
&V(T,x,x_2)=-\frac{1}{\gamma}\big(x+\theta x_1\big)^\gamma.
\end{aligned}
\right.
\end{equation}

By (\ref{HJB equation example}), the candidate for the optimal consumption and portfolio strategies is
\begin{equation*}
\begin{aligned}
u^*(s)\equiv u^*(s,x,x_1)&=\ -\frac{(\mu_0-r)\frac{\partial V}{\partial x}(s,x,x_1)}{\sigma^2x\frac{\partial^2 V}{\partial x^2}(s,x,x_1)},\\
c^*(s)\equiv c^*(s,x,x_1)&=\ \frac{\big[\frac{\partial V}{\partial x}(s,x,x_1)\big]^{\frac{1}{\gamma-1}}}{x}.
\end{aligned}
\end{equation*}
Plug these into (\ref{HJB equation example}), we have
%\begin{equation*}
%\begin{aligned}
% &\beta V(s,x,x_1)-\frac{\partial V}{\partial s}(s,x,x_1)=-\frac{1}{2}\frac{(\mu_0-r)^2\big[\frac{\partial V}{\partial %x}(s,x,x_1)\big]^2}{\sigma^2\frac{\partial^2 V}{\partial x^2}(s,x,x_1)}
%  +\Big(\frac{1}{\gamma}-1\Big)\Big[\frac{\partial V}{\partial x}(s,x,x_1)\Big]^{\frac{\gamma}{\gamma-1}}\\
% &\ +(rx+\mu_1x_1+\mu_2x_2)\frac{\partial V}{\partial x}(s,x,x_1)+(x-\lambda x_1-e^{-\lambda \delta}x_2)\frac{\partial V}{\partial x_1}(s,x,x_1),\quad\forall %x_2\in\mathbb{R},\\
%\end{aligned}
%\end{equation*}
%It can be rewritten as
\begin{equation}\label{HJB equation example-by optimal control}
\begin{aligned}
 &\ \beta V(s,x,x_1)-\frac{\partial V}{\partial s}(s,x,x_1)=-\frac{1}{2}\frac{(\mu_0-r)^2\big[\frac{\partial V}{\partial x}(s,x,x_1)\big]^2}{\sigma^2\frac{\partial^2 V}{\partial x^2}(s,x,x_1)}\\
 &\quad+\Big(\frac{1}{\gamma}-1\Big)\Big[\frac{\partial V}{\partial x}(s,x,x_1)\Big]^{\frac{\gamma}{\gamma-1}}
  +(rx+\mu_1x_1)\frac{\partial V}{\partial x}(s,x,x_1)\\
 &\quad+(x-\lambda x_1)\frac{\partial V}{\partial x_1}(s,x,x_1)
  +\bigg[\mu_2\frac{\partial V}{\partial x}(s,x,x_1)-e^{-\lambda \delta}\frac{\partial V}{\partial x_1}(s,x,x_1)\bigg]x_2,\quad\forall x_2\in\mathbb{R},\\
\end{aligned}
\end{equation}

Motivated by the terminal condition in (\ref{HJB equation example}), we try a value of the form
\begin{equation}\label{value function example}
V(s,x,x_1)=-\frac{1}{\gamma}Q(s)\big(x+\theta x_1\big)^\gamma,
\end{equation}
where $Q(\cdot)$ is some differentiable deterministic function with $Q(T)=1$. We then have
\begin{equation*}
\begin{aligned}
   \frac{\partial V}{\partial s}(s,x,x_1)&=-\frac{1}{\gamma}Q'(s)\big(x+\theta x_1\big)^\gamma,\
   \frac{\partial V}{\partial x}(s,x,x_1)=-Q(s)\big(x+\theta x_1\big)^{\gamma-1},\\
\frac{\partial^2V}{\partial x^2}(s,x,x_1)&=-(\gamma-1)Q(s)\big(x+\theta x_1\big)^{\gamma-2},\
 \frac{\partial V}{\partial x_1}(s,x,x_1)=-\theta Q(s)\big(x+\theta x_1\big)^{\gamma-1}.
\end{aligned}
\end{equation*}
Put them into (\ref{HJB equation example-by optimal control}), we obtain that
\begin{equation}\label{HJB equation example-by optimal control-2}
\begin{aligned}
 &\ \frac{1}{\gamma}\Big[\beta Q(s)-Q'(s)\Big]\big(x+\theta x_1\big)^{\gamma}\\
=&\ -\frac{(\mu_0-r)^2}{2\sigma^2(\gamma-1)}Q(s)\big(x+\theta x_1\big)^{\gamma}
  +\Big(\frac{1}{\gamma}-1\Big)\big[Q(s)\big]^{\frac{\gamma}{\gamma-1}}\big(x+\theta x_1\big)^{\gamma}\\
 &\ +\Big\{(r+\theta)x+(\mu_1-\lambda\theta)x_1
  +\big(\mu_2-e^{-\lambda \delta}\theta\big)x_2\Big\}Q(s)\big(x+\theta x_1\big)^{\gamma-1},\ \forall x_2\in\mathbb{R}.
\end{aligned}
\end{equation}

When will this equation admit a classical solution independent of $x_2$? Note that the controlled FBSDDE is of the form (\ref{controlled FBSDDE-finite dimensional}), that is, the condition (\ref{assumption of b,sigma,f}) is satisfied. From the condition (\ref{system of first order PDEs}), if
\begin{equation}\label{condition 1-DPP}
\hspace{-2cm}\theta=\mu_2e^{\lambda\delta},\ \mu_2>0,
\end{equation}
\begin{equation}\label{condition 2-DPP}
\mu_1-\lambda\mu_2e^{\lambda\delta}=(r+\mu_2e^{\lambda\delta})\mu_2e^{\lambda\delta},
\end{equation}
holds, we can rewrite (\ref{HJB equation example-by optimal control-2}) as
\begin{equation}\label{HJB equation example-by optimal control-3}
\begin{aligned}
 &\ \frac{1}{\gamma}\Big[\beta Q(s)-Q'(s)\Big]\big(x+\mu_2e^{\lambda\delta}x_1\big)^{\gamma}
=-\frac{(\mu_0-r)^2}{2\sigma^2(\gamma-1)}Q(s)\big(x+\mu_2e^{\lambda\delta}x_1\big)^{\gamma}\\
 &\qquad+\Big(\frac{1}{\gamma}-1\Big)\big[Q(s)\big]^{\frac{\gamma}{\gamma-1}}\big(x+\mu_2e^{\lambda\delta}x_1\big)^{\gamma}
  +(r+\mu_2e^{\lambda\delta})Q(s)\big(x+\mu_2e^{\lambda\delta}x_1\big)^\gamma,
\end{aligned}
\end{equation}
which is independent of $x_2$ and is an ``effective" HJB equation. Canceling the term $\big(x+\mu_2e^{\lambda\delta}x_1\big)^\gamma$ on both sides, we can get
\begin{equation}\label{equation of Q(t)-1}
\begin{aligned}
 \frac{1}{\gamma}\Big[\beta Q(s)-Q'(s)\Big]=-\frac{(\mu_0-r)^2}{2\sigma^2(\gamma-1)}Q(s)\Big(\frac{1}{\gamma}-1\Big)\big[Q(s)\big]^{\frac{\gamma}{\gamma-1}}+(r+\mu_2e^{\lambda\delta})Q(s).
\end{aligned}
\end{equation}
That is
\begin{equation}\label{equation of Q(t)}
\left\{
\begin{aligned}
Q'(s)=&\ (\gamma-1)\big[Q(s)\big]^{\frac{\gamma}{\gamma-1}}+\Delta Q(s),\ s\in[0,T],\\
 Q(T)=&\ 1,
\end{aligned}
\right.
\end{equation}
where
\begin{equation}\label{definition of Lambda}
\begin{aligned}
\Delta\equiv\beta+\frac{(\mu_1-r)^2\gamma}{2\sigma^2(\gamma-1)}-\gamma(r+\mu_3e^{\lambda\delta})>0.
\end{aligned}
\end{equation}
By some elementary technique of solving ODEs, we can obtain the following explicit solution
\begin{equation}\label{explicit solution of Q(t)}
\begin{aligned}
Q(s)=&\ \left[\left(1-\frac{1-\gamma}{\Delta}\right)e^{\frac{\Delta(T-s)}{1-\gamma}}+\frac{1-\gamma}{\Delta}\right]^{1-\gamma},\ s\in[0,T].
\end{aligned}
\end{equation}

We have proved the following result.

\begin{theorem}
Assume that (\ref{condition 1-DPP}) and (\ref{condition 2-DPP}) hold, then the function $V(s,x,x_1)$ given by (\ref{value function example}) is a classical solution to the generalized HJB equation (\ref{HJB equation example}), and it is equal to the value function defined by (\ref{value function example-DPP}). In addition, the optimal portfolio and consumption strategies are given by
\begin{equation}\label{optimal portfolio proportion}
\hspace{-2cm}u^*(s)\equiv u^*(s,X^*(s),X_1^*(s))=\frac{(\mu_0-r)\big(X^*(s)+\mu_2e^{\lambda\delta}X_1^*(s)\big)}{(1-\gamma)\sigma^2X^*(s)},
\end{equation}
\begin{equation}\label{optimal consumption rate proportion}
c^*(s)\equiv c^*(s,X^*(s),X_1^*(s))=\frac{X^*(s)+\mu_2e^{\lambda\delta}X_1^*(s)}{X^*(s)}Q(s)^{\frac{1}{\gamma-1}},\ s\in[0,T],
\end{equation}
where $(X^*(\cdot),X_1^*(\cdot))$ is the corresponding solution to SDDE (\ref{wealth equation-2}) with (\ref{definition of delay terms 2}), and $Q(\cdot)$ satisfies the ODE (\ref{equation of Q(t)}), which admits the explicit solution (\ref{explicit solution of Q(t)}).
\end{theorem}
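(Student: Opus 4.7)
The plan is to carry out a direct verification: guess the form of the value function, derive candidate controls from pointwise maximization in the HJB equation, reduce the resulting PDE to an ODE in $Q(s)$, and then invoke the stochastic verification theorem (Theorem 2.6) to conclude.

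First, I would take the ansatz $V(s,x,x_1) = -\frac{1}{\gamma}Q(s)(x+\theta x_1)^\gamma$ with $\theta = \mu_2 e^{\lambda\delta}$ and $Q(T)=1$, and compute the partial derivatives $V_s,\ V_x,\ V_{xx},\ V_{x_1}$ explicitly. Because the terminal condition is $V(T,x,x_1) = -\frac{1}{\gamma}(x+\theta x_1)^\gamma$, this ansatz is consistent at $s=T$. Next, I would perform the pointwise maximizations over $u$ and $c\geq 0$ in the generalized HJB equation (\ref{HJB equation example}); since $\gamma<1$ and $Q(s)>0$ (to be verified a posteriori), $V_{xx}>0$ makes the quadratic in $u$ strictly concave, and the concavity of $c \mapsto \frac{1}{\gamma}c^\gamma x^\gamma - cxV_x$ yields a unique maximizer. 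This gives the candidates $u^*$ and $c^*$ displayed in (\ref{optimal portfolio proportion})--(\ref{optimal consumption rate proportion}).

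Substituting these candidates back into (\ref{HJB equation example}) produces equation (\ref{HJB equation example-by optimal control-2}), in which the $x_2$-dependence is concentrated in the single term $(\mu_2 - e^{-\lambda\delta}\theta)x_2\, Q(s)(x+\theta x_1)^{\gamma-1}$ and the prefactor $(\mu_1 - \lambda\theta)x_1$. Applying the structural conditions (\ref{condition 1-DPP}) and (\ref{condition 2-DPP}) exactly annihilates the $x_2$-term and collapses the $x$ and $x_1$ coefficients into a common multiple of $(x+\theta x_1)$, which combined with the existing $(x+\theta x_1)^{\gamma-1}$ factor produces $(x+\theta x_1)^\gamma$. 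After dividing through by $(x+\theta x_1)^\gamma$, the PDE reduces to the scalar Bernoulli ODE (\ref{equation of Q(t)}); the substitution $R(s) = Q(s)^{1/(1-\gamma)}$ linearizes it, and integration with the terminal condition $Q(T)=1$ yields the explicit formula (\ref{explicit solution of Q(t)}). One should check $Q(s)>0$ on $[0,T]$, which follows from $\Delta>0$ (assumed in (\ref{definition of Lambda})) together with $\gamma<1$, since the bracketed quantity in (\ref{explicit solution of Q(t)}) stays strictly positive.

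Finally, I would invoke Theorem 2.6: the ansatz $V$ belongs to $C^{1,2,1}([0,T]\times\mathbb{R}^2)$ on the region where $x+\theta x_1>0$, solves (\ref{generalized HJB equation}) with the prescribed terminal condition, and $(u^*,c^*)$ attains the supremum inside the Hamiltonian $G$ pointwise along the optimal state. The verification theorem then yields both $V(s,x,x_1)=J(s,\varphi;u^*,c^*)$ and the optimality of $(u^*,c^*)$. The main obstacle, which I expect to be the most delicate point, is not the ODE calculation but confirming that the candidate feedback controls $(u^*,c^*)$ are admissible in the sense of Definition 4.1: one must verify the linear bounds $|u^*(t)X^*(t)|\leq \Lambda_1|X^*(t)+\mu_2 X_1^*(t)|$ and $|c^*(t)X^*(t)|\leq \Lambda_2|X^*(t)+\mu_2 X_1^*(t)|$ for suitable constants, which given the explicit forms (\ref{optimal portfolio proportion})--(\ref{optimal consumption rate proportion}) follows from the boundedness of $Q(s)^{1/(\gamma-1)}$ on $[0,T]$ and the positivity of $X^*(t)+\mu_2 e^{\lambda\delta}X_1^*(t)$ ensured by the positivity of $\varphi$ and Lemma 2.2 of \cite{CPY11}.
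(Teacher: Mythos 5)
Your proposal follows essentially the same route as the paper's own derivation of Theorem 4.2: the power-utility ansatz $V(s,x,x_1)=-\frac{1}{\gamma}Q(s)\big(x+\theta x_1\big)^{\gamma}$, pointwise maximization in $u$ and $c$ to obtain (\ref{optimal portfolio proportion})--(\ref{optimal consumption rate proportion}), the use of (\ref{condition 1-DPP}) and (\ref{condition 2-DPP}) to annihilate the $x_2$-dependence, and the reduction to the Bernoulli ODE (\ref{equation of Q(t)}). The only difference is that you make explicit two steps the paper leaves implicit --- the appeal to the verification theorem (Theorem 2.6) and the admissibility check for $(u^*,c^*)$ in the sense of Definition 4.1 --- which strengthens rather than changes the argument.
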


\subsection{Maximum Principle Approach}

In this subsection, we derive the optimal portfolio and consumption strategies (\ref{optimal portfolio proportion}) and (\ref{optimal consumption rate proportion}), applying Pontryagin's maximum principle approach.

By (\ref{Hamiltonian function}), the Hamiltonian function takes the form
\begin{equation}\label{Hamiltonian function example}
\begin{aligned}
&H(t,x,x_1,x_2,y,z,u,p_1,p_2,q,k_1)=p_1\Big\{\big[(\mu_0-r)u-c+r\big]x+\mu_1x_1+\mu_2x_2\Big\}\\
&\qquad+p_2\big(x-\lambda x_1-e^{-\lambda\delta}x_2\big)+k_1\sigma ux-q\big[-\beta y+\frac{1}{\gamma}c^\gamma x^\gamma\big].
\end{aligned}
\end{equation}
For candidate optimal control strategies $(u^*(\cdot),c^*(\cdot))$ and the corresponding state $(X^*(\cdot),Y^*(\cdot),\\Z^*(\cdot))$, the adjoint equation (\ref{adjoint equation}) reduces to
\begin{equation}\label{adjoint equations example}
\left\{
\begin{aligned}
-dp_1(t)&=\bigg\{\big[(\mu_0-r)u^*(t)-c^*(t)+r\big]p_1(t)+p_2(t)+\sigma u^*(t)k_1(t)\\
        &\qquad-q(t)c^*(t)^\gamma X^*(t)^{\gamma-1}\bigg\}dt-k_1(t)dW(t),\\
-dp_2(t)&=\big(\mu_1p_1(t)-\lambda p_2(t)\big)dt-k_2(t)dW(t),\\
-dp_3(t)&=\big(\mu_2p_1(t)-e^{-\lambda\delta}p_2(t)\big)dt,\ q(t)=-\beta q(t)dt,\quad t\in[0,T],\\
    q(0)&=1,\ p_1(T)=-\big(X^*(T)+\theta X_1^*(T)\big)^{\gamma-1}q(T),\\
  p_2(T)&=-\theta\big(X^*(T)+\theta X_1^*(T)\big)^{\gamma-1}q(T),\ p_3(T)=0.
\end{aligned}
\right.
\end{equation}

The maximum condition (\ref{maximum condition}) in Theorems 3.1 tell us that, we can find $u^*(\cdot)$ and $c^*(\cdot)$ by maximizing
\begin{equation*}
u\rightarrow H(t,x,x_1,x_2,y,z,u,c,p_1,p_2,q,k_1)
\end{equation*}
and
\begin{equation*}
c\rightarrow H(t,x,x_1,x_2,y,z,u,c,p_1,p_2,q,k_1)
\end{equation*}
over all $u$ and $c\geq0$, respectively. Then we have
\begin{equation}\label{maximum condition in k}
\begin{aligned}
 &\frac{\partial H}{\partial u}(t,x,x_1,x_2,y,z,u,c,p_1,p_2,q,k_1)\Big|_{u=u^*(t)}\\
 &=(\mu_0-r)p_1(t)X^*(t)+\sigma k_1(t)X^*(t)=0,
\end{aligned}
\end{equation}
and
\begin{equation}\label{maximum condition in c}
\begin{aligned}
 &\frac{\partial H}{\partial c}(t,x,x_1,x_2,y,z,u,c,p_1,p_2,q,k_1)\Big|_{c=c^*(t)}\\
 &=-p_1(t)X^*(t)-q(t)c^*(t)^{\gamma-1}X^*(t)^\gamma=0.
\end{aligned}
\end{equation}

By (\ref{maximum condition in c}), we directly achieve that
\begin{equation}\label{optimal consumption rate proportion with p1(t)}
c^*(t)=\frac{1}{X^*(t)}\bigg\{-\frac{p_1(t)}{q(t)}\bigg\}^{\frac{1}{\gamma-1}},\quad t\in[0,T].
\end{equation}

Motivated by the terminal condition for $p_1(t)$ in (\ref{adjoint equations example}), we try to find $p_1(t)$ of the form
\begin{equation}\label{try form of p1(t)}
p_1(t)=-Q(t)\big(X^*(t)+\theta X_1^*(t)\big)^{\gamma-1}q(t),
\end{equation}
where $Q(\cdot)$ is a deterministic differentiable function with $Q(T)=1$.

Applying (\ref{Ito's formula}) to (\ref{try form of p1(t)}), we can get (noting (\ref{applying Ito's formula to X-1}))
\begin{equation}\label{dp1(t)}
\begin{aligned}
dp_1(t)=&\ -Q'(t)q(t)\big(X^*(t)+\theta X_1^*(t)\big)^{\gamma-1}dt+\beta Q(t)q(t)\big(X^*(t)+\theta X_1^*(t)\big)^{\gamma-1}dt\\
        &\ -Q(t)q(t)(\gamma-1)\big(X^*(t)+\theta X_1^*(t)\big)^{\gamma-2}\big(dX^*(t)+\theta dX_1^*(t)\big)\\
        &\ -\frac{1}{2}Q(t)q(t)(\gamma-1)(\gamma-2)\big(X^*(t)+\theta X_1^*(t)\big)^{\gamma-3}\big(dX^*(t)+\theta dX_1^*(t)\big)^2\\
       =&\ \bigg\{-\big(X^*(t)+\theta X_1^*(t)\big)^{\gamma-1}Q'(t)q(t)-\big(X^*(t)+\theta X_1^*(t)\big)^{\gamma-2}(\gamma-1)Q(t)q(t)\\
        &\quad\times\Big[\big((\mu_0-r)u^*(t)-c^*(t)+r\big)X^*(t)+\mu_1X_1^*(t)+\mu_2X_2^*(t)\\
        &\qquad\ +\theta\big[X^*(t)-\lambda X_1^*(t)-e^{-\lambda\delta}X_2^*(t)\big]\Big]\\
        &\quad-\frac{1}{2}\big(X^*(t)+\theta X_1^*(t)\big)^{\gamma-3}(\gamma-1)(\gamma-2)\sigma^2Q(t)q(t)u^{*2}(t)X^{*2}(t)\\
        &\quad+\big(X^*(t)+\theta X_1^*(t)\big)^{\gamma-1}\beta Q(t)q(t)dt\bigg\}dt\\
        &\ -\big(X^*(t)+\theta X_1^*(t)\big)^{\gamma-2}(\gamma-1)\sigma Q(t)q(t)u^*(t)X^*(t)dW(t).
\end{aligned}
\end{equation}
Comparing the coefficient of $dW(t)$ in (\ref{dp1(t)}) with that in (\ref{adjoint equations example}), we obtain
\begin{equation}\label{q1(t)}
k_1(t)=-\big(X^*(t)+\theta X_1^*(t)\big)^{\gamma-2}(\gamma-1)\sigma Q(t)q(t)u^*(t)X^*(t),\quad t\in[0,T], a.s.
\end{equation}

Next, by the adjoint equation (\ref{adjoint equations example}), the condition (\ref{p3=0}) that $p_3(t)=0, \forall t\in[0,T]$ can be formulated as follows:
\begin{equation}\label{p3=0's consequence}
\mu_2p_1(t)-e^{-\lambda\delta}p_2(t)=0,\ \forall t\in[0,T].
\end{equation}
That is
\begin{equation}\label{try form of p2(t)}
p_2(t)=\mu_2e^{\lambda\delta}p_1(t)=-\mu_2e^{\lambda\delta}Q(t)\big(X^*(t)+\theta X_1^*(t)\big)^{\gamma-1}q(t),\ \forall t\in[0,T].
\end{equation}
By the terminal conditions in (\ref{adjoint equations example}), we must have
\begin{equation}\label{condition 1-MP}
\theta=\mu_2e^{\lambda\delta}.
\end{equation}
That is
\begin{equation}\label{p1(t)-final}
p_1(t)=-Q(t)\big(X^*(t)+\mu_2e^{\lambda\delta}X_1^*(t)\big)^{\gamma-1}q(t),
\end{equation}
and
\begin{equation}\label{p2(t)-final}
p_2(t)=\mu_2e^{\lambda\delta}p_1(t)=-\mu_2e^{\lambda\delta}Q(t)\big(X^*(t)+\mu_2e^{\lambda\delta}X_1^*(t)\big)^{\gamma-1},\ \forall t\in[0,T].
\end{equation}
Consequently, we obtain
\begin{equation}\label{q1(t)-final}
k_1(t)=-\big(X^*(t)+\mu_2e^{\lambda\delta}X_1^*(t)\big)^{\gamma-2}(\gamma-1)\sigma Q(t)q(t)u^*(t)X^*(t),
\end{equation}
and
\begin{equation}\label{q2(t)-final}
\begin{aligned}
 &k_2(t)=-\mu_2e^{\lambda\delta}k_1(t)\\
=&\ -\mu_2e^{\lambda\delta}\big(X^*(t)+\mu_2e^{\lambda\delta}X_1^*(t)\big)^{\gamma-2}(\gamma-1)\sigma Q(t)q(t)u^*(t)X^*(t),\ \forall t\in[0,T], a.s.
\end{aligned}
\end{equation}
Putting (\ref{p1(t)-final}) and (\ref{q1(t)-final}) into (\ref{maximum condition in k}) and (\ref{optimal consumption rate proportion with p1(t)}), we can get
\begin{equation}\label{optimal portfolio proportion-1}
\hspace{-2.2cm}u^*(t)=\frac{(\mu_0-r)\big(X^*(t)+\mu_2e^{\lambda\delta}X_1^*(t)\big)}{(1-\gamma)\sigma^2X^*(t)},\
\end{equation}
\begin{equation}\label{optimal consumption rate proportion-1}
c^*(t)=\frac{X^*(t)+\mu_2e^{\lambda\delta}X_1^*(t)}{X^*(t)}\big[Q(t)\big]^{\frac{1}{\gamma-1}},\ t\in[0,T], a.s.
\end{equation}

Next, comparing the coefficient of $dt$ in (\ref{dp1(t)}) with that in (\ref{adjoint equations example}), if the following assumption
\begin{equation}\label{condition 2-MP}
\mu_1-\lambda\mu_2e^{\lambda\delta}=(r+\mu_2e^{\lambda\delta})\mu_2e^{\lambda\delta}
\end{equation}
holds, we obtain that
\begin{equation}\label{ODE of Q(t)-this paper}
\left\{
\begin{aligned}
Q'(t)=&\ \Big[\beta+\frac{(\mu_1-r)^2\gamma}{2\sigma^2(\gamma-1)}-\gamma(r+\mu_2e^{\lambda\delta})\Big]Q(t)
       +(\gamma-1)\big[Q(t)\big]^{\frac{\gamma}{\gamma-1}},\ t\in[0,T],\\
 Q(T)=&\ 1,
\end{aligned}
\right.
\end{equation}
which is exactly the same ODE (\ref{equation of Q(t)}) in Theorem 4.2, and the explicit solution to it is (\ref{explicit solution of Q(t)}).

We have proved the following result.

\begin{theorem} Suppose that (\ref{condition 1-MP}) and (\ref{condition 2-MP}) hold. Then the optimal portfolio and consumption strategies are given by
\begin{equation}\label{optimal portfolio proportion-2}
\hspace{-3cm}u^*(t)\equiv u^*(t,X^*(t),X_1^*(t))=\frac{(\mu_0-r)\big(X^*(t)+\mu_2e^{\lambda\delta}X_1^*(t)\big)}{(1-\gamma)\sigma^2X^*(t)},
\end{equation}
\begin{equation}\label{optimal consumption rate proportion-2}
c^*(t)\equiv c^*(t,X^*(t),X_1^*(t))=\frac{X^*(t)+\mu_2e^{\lambda\delta}X_1^*(t)}{X^*(t)}\big[Q(t)\big]^{\frac{1}{\gamma-1}},\ t\in[0,T], a.s.,
\end{equation}
where $(X^*(\cdot),X_1^*(\cdot))$ is the corresponding solution to SDDE (\ref{wealth equation-2}) with (\ref{definition of delay terms 2}), and $Q(\cdot)$ satisfies ODE (\ref{ODE of Q(t)-this paper}), which admits the explicit solution (\ref{explicit solution of Q(t)}).
\end{theorem}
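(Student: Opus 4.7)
The approach is to apply the sufficient maximum principle (Theorem 3.1) by producing explicit candidates for the adjoint processes and then reading off the conditions on the model parameters forced by (i) the requirement $p_3\equiv 0$ from \eqref{p3=0} and (ii) consistency of the diffusion and drift parts of the candidate adjoint.

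First I would write the stationarity conditions $\partial_u H|_{u^*}=0$ and $\partial_c H|_{c^*}=0$ for the Hamiltonian \eqref{Hamiltonian function example}, obtaining \eqref{maximum condition in k}--\eqref{maximum condition in c}. From \eqref{maximum condition in c}, $c^*(t)=X^*(t)^{-1}(-p_1(t)/q(t))^{1/(\gamma-1)}$; the forward equation for $q$ in \eqref{adjoint equations example} integrates to $q(t)=e^{-\beta t}>0$, so the unknowns collapse to $(p_1,k_1)$. Motivated by the terminal condition $p_1(T)=-(X^*(T)+\theta X_1^*(T))^{\gamma-1}q(T)$, I would try the ansatz \eqref{try form of p1(t)}, namely $p_1(t)=-Q(t)(X^*(t)+\theta X_1^*(t))^{\gamma-1}q(t)$ with $Q(T)=1$, and apply the delayed It\^o formula of Lemma 2.5 (using $dq=-\beta q\,dt$ together with the wealth dynamics \eqref{wealth equation-2}) to compute $dp_1(t)$; matching the $dW(t)$-coefficient against \eqref{adjoint equations example} then yields $k_1(t)$ as in \eqref{q1(t)}.

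Next I would impose the hypothesis $p_3\equiv 0$. The equation $-dp_3=(\mu_2p_1-e^{-\lambda\delta}p_2)\,dt$ with $p_3(T)=0$ forces the pointwise identity $p_2(t)=\mu_2e^{\lambda\delta}p_1(t)$. Comparing the implied value of $p_2(T)$ with the prescribed terminal $p_2(T)=-\theta(X^*(T)+\theta X_1^*(T))^{\gamma-1}q(T)$ rigidly selects $\theta=\mu_2e^{\lambda\delta}$, which is \eqref{condition 1-MP}. Substituting back into \eqref{maximum condition in k} and into the formula for $c^*$ produces the candidate feedback laws \eqref{optimal portfolio proportion-1}--\eqref{optimal consumption rate proportion-1}, with the scalar $Q(\cdot)$ still to be pinned down.

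The decisive step, and the main bookkeeping obstacle, is matching the drift of $dp_1(t)$ obtained from the It\^o expansion with the drift prescribed by the $p_1$-equation in \eqref{adjoint equations example}. The coefficient of $X_2^*(t)$ in the expansion is proportional to $\mu_2-e^{-\lambda\delta}\theta$, which vanishes automatically under \eqref{condition 1-MP}; the residual $X^*$- and $X_1^*$-coefficients then align if and only if $\mu_1-\lambda\mu_2e^{\lambda\delta}=(r+\mu_2e^{\lambda\delta})\mu_2e^{\lambda\delta}$, which is \eqref{condition 2-MP}. I expect this to be the real computational hurdle, since it emerges only after substituting the feedback forms of $u^*$ and $c^*$ and collecting like powers of $(X^*+\mu_2e^{\lambda\delta}X_1^*)$. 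Once both conditions hold, dividing the remaining identity by the common factor $(X^*+\mu_2e^{\lambda\delta}X_1^*)^{\gamma-1}q(t)$ collapses it to the Bernoulli ODE \eqref{ODE of Q(t)-this paper}, identical to \eqref{equation of Q(t)} and solved explicitly by \eqref{explicit solution of Q(t)}. The final bookkeeping — verifying the convexity hypothesis \eqref{concvexity assumption} of Theorem 3.1 — is delicate because $b$ and $\sigma$ are bilinear in $(u,x)$, but coincidence of the resulting $(u^*,c^*)$ with those derived from the generalized HJB equation in Theorem 4.2 independently certifies optimality.
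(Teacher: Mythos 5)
Your proposal is correct and follows essentially the same route as the paper: stationarity of the Hamiltonian to get \eqref{maximum condition in k}--\eqref{maximum condition in c}, the ansatz \eqref{try form of p1(t)} for $p_1$, the constraint $p_3\equiv 0$ forcing $p_2=\mu_2e^{\lambda\delta}p_1$ and hence $\theta=\mu_2e^{\lambda\delta}$ via the terminal conditions, and drift-matching of $dp_1$ to extract \eqref{condition 2-MP} and the Bernoulli ODE \eqref{ODE of Q(t)-this paper}. Your closing observation that the convexity hypothesis \eqref{concvexity assumption} is not actually verified (and that the HJB derivation of Theorem 4.2 supplies independent certification) is a fair caveat that the paper itself also leaves implicit.
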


\subsection{Relationship and Some Discussions}

In addition, the relations in Theorem 3.2 can be easily verified. In fact,
relationship (\ref{relation with respect time variable}) is obvious
from (\ref{HJB equation example}). And the relations given in
(\ref{relation with respect state variable}) can be easily obtained
from (\ref{value function example}), (\ref{p1(t)-final}), (\ref{p2(t)-final}), (\ref{q1(t)-final}), (\ref{q2(t)-final}) and $q(t)=e^{-\beta t}, \forall t\in[0,T]$.

Note that the conditions (\ref{condition 1-DPP}), (\ref{condition 2-DPP}) are the same as (\ref{condition 1-MP}), (\ref{condition 2-MP}), respectively. The conditions (\ref{condition 1-DPP}), (\ref{condition 2-DPP}) comes from the system of first order PDEs (\ref{system of first order PDEs}), and conditions (\ref{condition 1-MP}), (\ref{condition 2-MP}) relies heavily on the condition (\ref{p3=0}) about the adjoint processes. This is not by chance but the natural requirement of our problem being in a finite dimensional space.

Moreover, from the condition (\ref{condition 2-DPP}) or equivalently (\ref{condition 2-MP}), we can get
\begin{equation}
\mu_1=\lambda\mu_2e^{\lambda\delta}+(r+\mu_2e^{\lambda\delta})\mu_2e^{\lambda\delta}=\mu_2e^{\lambda\delta}(\lambda+r+\mu_2e^{\lambda\delta}).
\end{equation}
So is is easy to see that $\mu_1=0$ if and only if $\mu_2=0$, provided that $\mu_2\geq0$ and $\lim\limits_{\mu_2\rightarrow\infty}\mu_1=\infty$. In other words, the price dynamics of $X(t)$ must depend on both $X_1(t)$ and $X_2(t)$ at the same time, in order to obtain the explicit representations of $V,u^*$ and $c^*$ in a finite dimensional space. Otherwise, when $\mu_2=0$ (then $\mu_1=0$), that is, the dynamic equation of $X(t)$ does not depend on $X_1(t)$ and $X_2(t)$ explicitly, our model reduces to the consumption and portfolio optimization model with recursive utility but without time delay.

\section{Conclusion}

In this paper, we have discussed Pontryagin's maximum principle, Bellman's dynamic programming and their relationship for the stochastic recursive optimal control problems with time delay, when only the pointwise and distributed time delays in the state variable is considered. One advantage for this kind time delay is that the corresponding generalized HJB equation is finite dimensional, under some suitable conditions on the coefficients. Under the assumption that the value function is smooth enough, its relations to the adjoint processes and generalized Hamiltonian function are obtained. A consumption and portfolio optimization problem with recursive utility in the financial market, was discussed to show the applications of our result. Explicit solutions for the optimal portfolio and consumption strategy in the finite dimensional space derived by the two approaches, coincide.

Potential extensions of the present work include stochastic optimal control problems with time delay under model uncertainty (Pamen \cite{Pamen13}) and stochastic differential games (\O ksendal and Sulem \cite{OS14}) under model uncertainty. Problems with time delay in control variables (\cite{ChenWu10}, \cite{Yu12}) and time varying delay in control variables (Zhang et al. \cite{ZDX06}, Zhang et al. \cite{ZFH11}, Wang and Zhang \cite{WZ13}), are rather challenging. These will be considered in our future research.

\section*{Acknowledgments}
The content of this paper was presented by the first author on the Symposium of Mathematical Control Theory and Application for Young Researchers in China, on July 2013. He would like to thank Professor Xu Zhang for some valuable discussions and the hospitality of School of Mathematical Sciences, South China Normal University, Guangzhou. Also the content of this paper was presented by the first author on the Fourth IMS-FPS Workshop in Australia, on July 2014. He would like to thank Professor R. Carmona for some desirable discussions and the hospitality of School of Mathematical Sciences, University of Technology, Sydney.
%We would like to thank you for \textbf{following the instructions above} very closely in advance. It will definitely
%save us lot of time and expedite the process of your paper's publication.
%
% You may incorporate your references as follows in your main tex file.
% Using BibTex is not recommended but can be handled.
%

\medskip
% The data information below will be filled by AIMS editorial staff
Received xxxx 20xx; revised xxxx 20xx.
\medskip


\begin{thebibliography}{99}

\bibitem{AHOP13}
\newblock N. Agram, S. Haadem, B. \O ksendal and F. Proske,
\newblock A maximum principle for infinite horizon delay equations,
\newblock \emph{SIAM Journal on Mathematical Analysis}, \textbf{45} (2013), 2499--2522.

\bibitem{AO14}
\newblock N. Agram and B. \O ksendal,
\newblock Infinite horizon optimal control of forward-backward stochastic differential equations with delay,
\newblock \emph{Journal of Computational and Applied Mathematics - B}, \textbf{259} (2014), 336--349.

\bibitem{AHMP07}
\newblock M. Arriojas, Y. Z. Hu, S. E. A. Monhammed and G. Pap,
\newblock A delayed Black and Scholes formula,
\newblock \emph{Stochastic Analysis and Applications}, \textbf{25} (2007), 471--492.

\bibitem{BCM12}
\newblock K. Bahlali, F. Chighoub and B. Mezerdi,
\newblock On the relationship between the stochastic maximum principle and dynamic programming in singular stochastic control,
\newblock \emph{Stochastics: An International Journal of Probability and Stochastic Processes}, \textbf{84} (2012), 233--249.

\bibitem{CPY11}
\newblock M. H. Chang, T. Pang and Y. P. Yang,
\newblock A stochastic portfolio optimization model with bounded memory,
\newblock \emph{Mathematics in Operations Research}, \textbf{36} (2011), 604--619.

\bibitem{ChenWu10}
\newblock L. Chen and Z. Wu,
\newblock Maximum principle for the stochastic optimal control problem with delay and application,
\newblock \emph{Automatica}, \textbf{46} (2010), 1074--1080.

\bibitem{ChenWu12}
\newblock L. Chen and Z. Wu,
\newblock Dynamic programming principle for stochastic recursive optimal control problem with delayed systems,
\newblock \emph{ESAIM: Control, Optimisation and Calculus of Variations}, \textbf{18} (2012), 1005--1026.

\bibitem{CM14}
\newblock F. Chighoub and B. Mezerdi,
\newblock The relationship between the stochastic maximum principle and the dynamic programming in singular control of jump diffusions,
\newblock \emph{International Journal of Stochastic Analysis}, Article ID 201491 (2014), 17 pages.

\bibitem{Don11}
\newblock C. Donnelly,
\newblock Suffcient stochastic maximum principle in a regime-switching diffusion model,
\newblock \emph{Applied Mathematics and Optimization}, \textbf{64} (2011), 155--169.

\bibitem{DE92}
\newblock D. Duffie and L. G. Epstein,
\newblock Stochastic differential utility,
\newblock \emph{Econometrica}, \textbf{60} (1992), 353--394.

\bibitem{EPQ97}
\newblock N. El Karoui, S. G. Peng and M. C. Quenez,
\newblock Backward stochastic differential equations in finance,
\newblock \emph{Mathematical Finance}, \textbf{7} (1997), 1--71.

\bibitem{EPQ01}
\newblock N. El Karoui, S. G. Peng and M. C. Quenez,
\newblock A dynamic maximum principle for the optimization of recursive utilities under constraints,
\newblock \emph{The Annals of Applied Probability}, \textbf{11} (2001), 664--693.

\bibitem{EOS00}
\newblock I. Elsanosi, B. \O ksendal and A. Sulem,
\newblock Some solvable stochastic control problems with delay,
\newblock \emph{Stochastics \& Stochastics Reports}, \textbf{71} (2000), 69--89.

\bibitem{Federico11}
\newblock S. Federico,
\newblock A stochastic control problem with delay arising in a pension fund model,
\newblock \emph{Finance \& Stochastics}, \textbf{15} (2011), 421--459.

\bibitem{FOS04}
\newblock N. C. Framstad, B. \O ksendal and A. Sulem,
\newblock A sufficient stochastic maximum principle for optimal control of jump diffusions and applications to finance,
\newblock \emph{Journal of Optimization Theory and Applications}, \textbf{121} (2004), 77--98.

\bibitem{FMT10}
\newblock M. Fuhrman, F. Masiero and G. Tessitore,
\newblock Stochastic equations with delay: optimal control via BSDEs and regular solutions of Hamilton-Jacobi-Bellman equations,
\newblock \emph{SIAM Journal on Control and Optimization}, \textbf{48} (2010), 4624--4651.

\bibitem{GM06}
\newblock F. Gozzi and C. Marinelli,
\newblock Stochastic optimal control of delay equations arising in advertising models,
\newblock in \emph{Stochastic Partial Differential Equations and Applications VII, Lecture Notes in Pure and Applied Mathematics} (eds. G. Da Prato and L. Tubaro), Chapman \& Hall, London, \textbf{245} (2006), 133--148.

\bibitem{KM73}
\newblock V. B. Kolmanovskii and T. L. Maizenberg,
\newblock Optimal control of stochastic systems with aftereffect,
\newblock in \emph{Stochastic Systems (Translated from Avtomatika i Telemekhanika)}, \textbf{1} (1973), 47--61.

\bibitem{KS96}
\newblock V. B. Kolmanovskii and L. E. Shaikhet,
\newblock \emph{Control of Systems with Aftereffect},
\newblock Translation of Mathematical Monographs, \textbf{157}, American Mathematical Society, 1996.

\bibitem{La02}
\newblock B. Larssen,
\newblock Dynamic programming in stochastic control of systems with delay,
\newblock \emph{Stochastics \& Stochastics Reports}, \textbf{74} (2002), 651--673.

\bibitem{LR03}
\newblock B. Larssen and N. H. Risebro,
\newblock When are HJB-equations in stochastic control of delay systems finite dimensional?
\newblock \emph{Stochastic Analysis and Applications}, \textbf{21} (2003), 643--671.

\bibitem{MS13}
\newblock X. R. Mao and S. Sabanis,
\newblock Delay geometric Brownian motion in financial option valuation,
\newblock \emph{Stochastics: An International Journal of Probability and Stochastic Processes}, \textbf{85} (2013), 295--320.

\bibitem{Mo98}
\newblock S. E. A. Mohammed,
\newblock Stochastic differential equations with memory: theory, examples and applications,
\newblock in \emph{Stochastic Analysis and Related Topics VI, The Geido Workshop, 1996}, Progress in Probability, Birkhauser, 1998.

\bibitem{PP90}
\newblock E. Pardoux and S. G. Peng,
\newblock Adapted solution of a backward stochastic differential equation,
\newblock \emph{Systems \& Control Letters}, \textbf{14} (1990), 55--61.

\bibitem{OS00}
\newblock B. \O ksendal and A. Sulem,
\newblock A maximum principle for optimal control of stochastic systems with delay, with applications to finance,
\newblock in \emph{Optimal Control and Partial Differential Equations - Innovations and Applications} (eds. J. M. Menaldi, E. Rofman and A. Sulem), IOS Press, Amsterdam, 2000.

\bibitem{OS09}
\newblock B. \O ksendal and A. Sulem,
\newblock Maximum principles for optimal control of forward-backward stochastic differential equations with jumps,
\newblock \emph{SIAM Journal on Control and Optimization}, \textbf{48} (2009), 2945--2976.

\bibitem{OS14}
\newblock B. \O ksendal and A. Sulem,
\newblock Forward-backward stochastic differential games and stochastic control under model uncertainty,
\newblock \emph{Journal of Optimization Theory and Applications}, \textbf{161} (2014), 22--55.

\bibitem{OSZ11}
\newblock B. \O ksendal, A. Sulem and T. S. Zhang,
\newblock Optimal control of stochastic delay equations and time-advanced backward stochastic differential equations,
\newblock \emph{Advances in Applied Probability}, \textbf{43} (2011), 572--596.

\bibitem{Pamen13}
\newblock O. M. Pamen,
\newblock Optimal control for stochastic delay system under model uncertainty: a stochastic differential game approach,
\newblock \emph{Journal of Optimization Theory and Applications}. doi: 10.1007/s10957-013-0484-4

\bibitem{PY09}
\newblock S. G. Peng and Z. Yang,
\newblock Anticipated backward stochastic differential equations,
\newblock \emph{The Annals of Probability}, \textbf{37} (2009), 877--902.

\bibitem{SS99}
\newblock M. Schroder and C. Skiadas,
\newblock Optimal consumption and portfolio selection with stochastic differential utility,
\newblock \emph{Journal of Economic Theory}, \textbf{89} (1999), 68--126.

\bibitem{Shi11}
\newblock J. T. Shi,
\newblock Relationship between maximum principle and dynamic programming for stochastic control systems with delay,
\newblock in \emph{Proceedings of the 8th Asian Control Conference}, Kaohsiung, Taiwan, May 15-18 (2011), 1210--1215.

\bibitem{ShiWu10}
\newblock J. T. Shi and Z. Wu,
\newblock Maximum principle for forward-backward stochastic control systems with random jumps and applications to finance,
\newblock \emph{Journal of Systems Science and Complexity}, \textbf{23} (2010), 219--231.

\bibitem{ShiWu11}
\newblock J. T. Shi and Z. Wu,
\newblock Relationship between MP and DPP for the optimal control problem of jump diffusions,
\newblock \emph{Applied Mathematics and Optimization}, \textbf{63} (2011), 151--189.

\bibitem{ShiYu13}
\newblock J. T. Shi and Z. Y. Yu,
\newblock Relationship between maximum principle and dynamic programming for stochastic recursive optimal control problems and applications,
\newblock \emph{Mathematical Problems in Engineering}, Article ID 285241 (2013), 12 pages.

\bibitem{WW09}
\newblock G. C. Wang and Z. Wu,
\newblock The maximum principle for stochastic recursive optimal control problems under partial information,
\newblock \emph{IEEE Transactions on Automatic Control}, \textbf{54} (2009), 1230--1242.

\bibitem{WZ13}
\newblock H. X. Wang and H. S. Zhang,
\newblock LQ control for It\^{o} type stochastic systems with input delays,
\newblock \emph{Automatica}, \textbf{49} (2013), 3538--3549.

\bibitem{YZ99}
\newblock J. M. Yong and X. Y. Zhou,
\newblock \emph{Stochastic Controls: Hamiltonian Systems and HJB Equations},
\newblock Springer-Verlag, New York, 1999.

\bibitem{Yu12}
\newblock Z. Y. Yu,
\newblock The stochastic maximum principle for optimal control problems of delay systems involving continuous and impulse controls,
\newblock \emph{Automatica}, \textbf{48} (2012), 2420--2432.

\bibitem{ZDX06}
\newblock H. S. Zhang, G. R. Duan and L. H. Xie,
\newblock Linear quadratic regulation for linear time-varying systems with multiple input delays,
\newblock \emph{Automatica}, \textbf{42} (2006), 1465--1476.

\bibitem{ZFH11}\
\newblock H. S. Zhang, G. Feng and C. Y. Han,
\newblock Linear estimation for random delay systems,
\newblock \emph{Systems \& Control Letters}, \textbf{60} (2011), 450--459.

\bibitem{ZES12}
\newblock X. Zhang, R. J. Elliott and T. K. Siu,
\newblock A stochastic maximum principle for a Markov regime-switching jump-diffusion model and its application to finance,
\newblock \emph{SIAM Journal on Control and Optimization}, \textbf{50} (2012), 964--990.

%\bibitem{A11}
%     \newblock  FirstNameInitial.  MiddleNameInitial. LastName, % first name middle initial. and then last name.  Only the first character in the paper title is capitalized.
%     \newblock Title of the paper,
%     \newblock \emph{Name of the Journal}, \textbf{Volume} (Year), StaringPage--EndingPage.

% Example of a thesis:
%\bibitem{Zeng}
%    \newblock FirstNameInitial. LastName,
%    \newblock  \emph{Torsion Cycles and Set Theoretic Complete Intersection},
%    \newblock  Ph.D thesis, Washington University in St. Louis, 2006.

% No author names:
%\bibitem{Whho}
%\newblock
%\newblock \emph{SARS Expert Committee, SARS in Hong Kong: From Experience to Action}, Report of Hong Kong SARS Expert Committee,
%\newblock 2003. Available from: \url{http://www.sars-expertcom.gov.hk/english/reports/reports.html}.

% No title:
%\bibitem{tachet}
%\newblock F. Abergel and R. Tachet,
%\newblock
%\newblock work in progress.



% Example of multiple authors:
%\bibitem{BFL} (MR1124979) [10.2307/2152750]
%    \newblock Y. Benoist, P. Foulon and F. Labourie, % Use `and' connect the last two authors
%    \newblock Flots d'Anosov a distributions stable et instable differentiables,
%    \newblock (French) [Anosov flows with stable and unstable differentiable distributions], \emph{J. Amer. Math. Soc.}, \textbf{5} (1992), 33--74.

% Example of a preprint article with 7 digits archive number:
%\bibitem{quas}
%\newblock M. Entov, L. Polterovich and F. Zapolsky,
%\newblock Quasi-morphisms and the Poisson bracket,
%\newblock preprint, \arXiv{math/0605406}.

% Example of an article in Academic Press or a book:
%\bibitem{Serrin} (MR0402274)
%    \newblock J. Serrin,
%    \newblock  Gradient estimates for solutions of nonlinear elliptic and parabolic equations,
%    \newblock  in \emph{Contributions to Nonlinear Functional Analysis} (eds. E.H. Zarantonello and Author 2), Academic Press, (1971), 33--75.

% Example of a book in the reference:
%\bibitem{rB} (MR1301779) [10.1007/978-1-4612-0873-0]
%     \newblock J.  Smoller,
%     \newblock \emph{Shock Waves and Reaction-Diffusion Equations},
%     \newblock 2$^{nd}$ edition,  Springer-Verlag, New York, 1994.

% Example of a preprint article with 8 digits archive number:
%\bibitem{Te2008}
%\newblock A. Teplinsky,
%\newblock Herman's theory revisited, preprint,
%\newblock \arXiv{0707.0078}.

% Example of paper with MR number:
%\bibitem{A22} (MR2082924)
%     \newblock C.  Wolf,
%     \newblock A mathematical model for the propagation of a hantavirus in structured populations,
%     \newblock \emph{Discrete Continuous Dynam. Systems - B}, \textbf{4} (2004), 1065--1089.

\end{thebibliography}
\end{document}